\theoremstyle{plain}
\newtheorem{theorem}{Theorem}[section]
\newtheorem*{theorem*}{Theorem}
\newtheorem{lemma}[theorem]{Lemma}
\newtheorem{corollary}[theorem]{Corollary}
\newtheorem{proposition}[theorem]{Proposition}
\newtheorem*{proposition*}{Proposition}
\theoremstyle{definition}
\newtheorem{definition}[theorem]{Definition}
\theoremstyle{definition}
\newtheorem{example}[theorem]{Example}
\theoremstyle{remark}
\newtheorem{remark}[theorem]{Remark}
\numberwithin{equation}{section}
\DeclareMathOperator{\supp}{supp}
\newcommand{\abs}[1]{\left\lvert #1 \right\rvert}
\newcommand{\norm}[1]{\left\lVert #1 \right\rVert}
\newcommand{\R}{\mathbb{R}}
\newcommand{\C}{\mathbb{C}}
\newcommand{\N}{\mathbb{N}}
\newcommand{\f}[2]{\mathscr{F}_{#1}{#2}}
\newcommand{\finv}[2]{\mathscr{F}_{#1}^{-1}{#2}}
\newcommand{\D}{D}
\newcommand{\tp}[1]{\ensuremath{{#1}_{\Theta^{\perp}}}}
\newcommand{\tpk}[1]{\ensuremath{{#1}_{\Theta_{k}^{\perp}}}}
\newcommand{\ftp}{\ensuremath{{\mathscr{F}}_{\Theta^{\perp}}}}
\newcommand{\ftpk}{\ensuremath{{\mathscr{F}}_{\Theta_{k}^{\perp}}}}
\newcommand{\ftpj}[1]{\ensuremath{{\mathscr{F}}_{\Theta_{#1}^{\perp}}}}
\newcommand{\na}[1]{\noalign{\vspace*{0.2cm}\noindent{#1}\vspace*{0.2cm}}}
\renewcommand{\L}[1]{L^{#1}}
\newcommand{\nn}{\nonumber}
\newcommand{\Dj}{\frac{\partial}{\partial x_j}}
\newcommand{\Int}{\ensuremath{\mathop{\int}\limits}}
\def\blfootnote{\xdef\@thefnmark{}\@footnotetext}
\title{Translation-Invariant Estimates for
  Operators with Simple Characteristics} \author{ Eemeli
  Bl{\aa}sten \thanks{HKUST Jockey Club Institute for Advanced Study,
    The Hong Kong University of Science and Technology, 
    Clear Water Bay, Kowloon, Hong Kong
    (email: {\tt iaseemeli@ust.hk}, telephone: {\tt+852 3469 2320},
    fax: {\tt+852 2243 1538}).}
  \and John Sylvester \thanks{Department of Mathematics, University
    of Washington, Seattle, Washington 98195, U.S.A. ({\tt
    sylvest@uw.edu}).}}
\date{}
\begin{document}
\maketitle
\blfootnote{Keywords: fundamental solution, a-priori estimate, invariant, simply characteristic, higher order.}
\blfootnote{2010 Mathematics Subject Classification: 35A01, 35A24, 35B45, 35G05, 42B45.}

\begin{abstract}
  We prove \(L^{2}\) estimates and solvability for a variety of simply
  characteristic constant coefficient partial differential
  equations  \(P(D)u=f\).  These estimates
\[
||u||_{L^2(D_{r})}\le C\sqrt{d_{r}d_{s}} ||f||_{_{L^2(D_{s})}}
\]
depend on geometric quantities --- the diameters \(d_{r}\)
and \(d_{s}\) of the regions \(D_{r}\), where we estimate
\(u\), and \(D_{s}\), the support of \(f\) --- rather than
weights. As these geometric quantities transform simply
under translations, rotations, and dilations, the
corresponding estimates share the same properties. In
particular, this implies that they transform appropriately
under change of units, and therefore are physically
meaningful.  The explicit dependence on the diameters
implies the correct global growth estimates. The weighted
\(L^{2}\) estimates first proved by Agmon \cite{Agmon} in
order to construct the generalized eigenfunctions for
Laplacian plus potential in \(\R^{n}\), and the more
general and precise Besov type estimates of Agmon and
H\"ormander \cite{Agmon-Hormander}, are all simple direct
corollaries of the estimate above.
\end{abstract}

\section*{Aknowledgements}
  E. Bl{\aa}sten was partially supported by the European Research 
  Council's 2010 Advanced Grant 267700. J. Sylvester was partially 
  supported by the National Science Foundation's grant DMS-1309362.

\newpage
\section{Introduction}

Constant coefficient partial differential equations are
translation invariant, so it is natural to seek estimates
that share this property.  For the Helmholtz equation, and
other equations related to wave phenomena, \(L^2\)-norms
are appropriate in bounded regions because they measure
energy.  For problems in all of \(\R^{n}\),
however, a solution with finite \(L^2\)-norm
may radiate infinite power\footnote{Finite radiated power
  typically means that solutions decay fast enough at
  infinity. For outgoing solutions to the Helmholtz
  equation, radiated power can be expressed as the limit
  as \(R\rightarrow\infty\)
  of the \(L^{2}\)
  norm of the restriction of the solution to the sphere of
  radius \(R\).
  It remains finite as long as solutions decay as
  \(r^{-\frac{n-1}{2}}\)
  in \(n\)
  dimensions.}, and therefore not satisfy the necessary
physical constraints. The solution provided by Agmon
\cite{Agmon} was to introduce \(L^2_{\delta}\)
spaces where weights \((1+|x|^{2})^{\frac{\delta}{2}}\)
correctly enforced the finite transmission of power, but
gave up the translation invariance, as well as scaling
properties necessary for the estimates to make sense in
physical units. Later work by Agmon
and H\"ormander \cite{Agmon-Hormander} used Besov spaces
to exactly characterize solutions that radiated finite
power, but these spaces also relied on a weight and
therefore broke the translation invariance that is
intrinsically associated with both the physics and the
mathematics of the underlying problem. Later work by
Kenig, Ponce, and Vega \cite{MR1230709} modified the
Agmon-H\"ormander norms to regain better scaling
properties.

Our goal here is to offer \(L^2\) estimates that enforce finite 
radiation of power without using weights that destroy translation 
invariance and scaling properties. The 
following theorem, which applies to  a class of scalar pde's with constant 
coefficients and simple characteristics, summarizes our
main results, which will be proved as
Theorem \ref{th:main} and Theorem \ref{th:other}.

\begin{theorem*}
    Let \(P(\D)\) be a constant coefficient partial differential 
    operator on \(\R^n\). Assume that it is either
  \begin{enumerate}
    \item real, of second order, and with no real double 
      characteristics, or
    \item of $N$-th order, $N \geqslant 1$, with admissible symbol (Definition
      \ref{admissibleSymbol}) and no complex double characteristics
  \end{enumerate}
    Then there exists a constant
    \(C(P,n)\) such that, for every open bounded
    \(D_{s}\subset\R^{n}\), and every  \(f\in L^2(D_{s})\),
    there is a  \(u\in L^2_{loc}(\R^{n})\) satisfying
 \begin{eqnarray*}
  P(\D)u=f
\end{eqnarray*}
and for any bounded domain \(D_{r}\subset\R^{n}\)
\begin{eqnarray}\label{eq:53}
  ||u||_{L^2(D_{r})}\le C \sqrt{d_{r}d_{s}}||f||_{L^2(D_{s})}
\end{eqnarray}
where  \(d_{j}\) is the diameter of  \(D_{j}\),
the supremum over all lines of the length of the
intersection of the line with  \(D_{j}\); i.e.   
\begin{eqnarray*}
  \nn
  d_{j}= \sup_{\mathrm{lines}\ l}\mu_{1}(l\cap D_{j}).
\end{eqnarray*}
\end{theorem*}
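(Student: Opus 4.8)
My plan is to convert \eqref{eq:53} into a one-dimensional estimate for a parametrized family of constant-coefficient ODEs, in a form where the translation invariance and the geometric constants $d_r,d_s$ appear transparently. Fix a direction $\theta\in S^{n-1}$, split $x=(x',t)$ with $t=x\cdot\theta$ and $x'$ in the orthogonal hyperplane $\Theta^{\perp}$, and take the Fourier transform only in $x'$ (the partial Fourier transform, which I also denote by a hat, with dual variable $\eta\in\Theta^{\perp}$). Then $P(\D)u=f$ becomes, for each frozen $\eta$, the ODE
\[
  q_\eta(D_t)\,\widehat u(\eta,t)=\widehat f(\eta,t),\qquad q_\eta(\lambda):=P(\eta+\lambda\theta),
\]
with $q_\eta$ a polynomial of degree $\le N$. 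If $\operatorname{supp}f\subset D_s$ then $\widehat f(\eta,\cdot)$ is supported, for every $\eta$, in the interval $I_s:=\{x\cdot\theta:x\in D_s\}$, of length $\le d_s$; likewise I measure $\widehat u(\eta,\cdot)$ on $I_r:=\{x\cdot\theta:x\in D_r\}$, of length $\le d_r$. Since $D_r$ and $D_s$ lie inside the slabs $\{x:x\cdot\theta\in I_r\}$ and $\{x:x\cdot\theta\in I_s\}$, Plancherel in $\eta$ yields \eqref{eq:53} (up to a finite sum over directions, discussed below) as soon as one has the uniform-in-$\eta$ bound
\[
  \norm{\widehat u(\eta,\cdot)}_{L^2(I_r)}\le C\sqrt{|I_r|\,|I_s|}\;\norm{\widehat f(\eta,\cdot)}_{L^2(I_s)} .
\]

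The heart of the matter is the corresponding one-dimensional lemma, which I would establish first: if $q$ has degree $\le N$ and its real roots are all simple and mutually separated by at least $c>0$, then $q(D_t)v=g$ has a solution with $\norm{v}_{L^2(I_r)}\le C(N,c)\sqrt{|I_r|\,|I_s|}\,\norm{g}_{L^2(I_s)}$ for all intervals $I_r,I_s$. Factor $q(D_t)=\alpha\prod_j(D_t-\lambda_j)$ and build a fundamental solution $E$ as the iterated convolution of the elementary solutions of the first-order factors, choosing for each $\lambda_j$ the forward or backward half-line integration according to the sign of $\operatorname{Im}\lambda_j$ (either one when $\lambda_j$ is real). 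An elementary kernel has modulus $\le1$ when $\lambda_j$ is real and decays exponentially when it is not, and because the real roots are simple and $c$-separated the iterated convolution stays bounded, $\norm{E}_{L^\infty(\R)}\le C(N,c)$. This is precisely where the absence of double characteristics enters: a real double root would force a kernel growing linearly in $t$ and destroy the uniform bound. With $v=E*g$ and $\operatorname{supp}g\subset I_s$, Cauchy--Schwarz gives $\norm{v}_{L^\infty}\le\norm{E}_{L^\infty}\norm{g}_{L^1(I_s)}\le C\sqrt{|I_s|}\,\norm{g}_{L^2(I_s)}$, and a second Cauchy--Schwarz over $I_r$ completes the lemma.

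To apply the lemma one needs $\theta$ so that $q_\eta=P(\eta+\cdot\,\theta)$ has this root structure for all relevant $\eta$. By the no-double-characteristics hypothesis $\{P=0\}$ is a smooth hypersurface, and the property required is precisely that $\theta$ be transversal to it: transversality at $\xi_0$ reads $q'_{\eta_0}(\lambda_0)\ne0$ with $\xi_0=\eta_0+\lambda_0\theta$, so every real root of every $q_\eta$ is simple, and uniform transversality also forces $c$-separation of the real roots (two colliding real roots would both make $q'_\eta$ vanish there). Since no single direction can be transversal along the whole variety, I would cover it by finitely many pieces $V_1,\dots,V_m$, choose $\theta_k$ transversal on $V_k$, and construct a particular fundamental solution $E_P=\sum_k E_k$ --- a finite sum in which the $k$-th term is a Fourier multiplier localizing $1/P$ to $V_k$, realized by deforming the Fourier contour in the $\theta_k$-direction so as to avoid the zeros of $P$, plus one more piece supported where $|P|$ is bounded below (handled by direct division, and itself covered by the lemma since there the ODE Green's functions are uniformly bounded, indeed exponentially decaying). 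The construction is arranged so that the partial Fourier transform $\widehat{E_k}(\eta,\cdot)$ of each summand is, uniformly in $\eta$, a bounded kernel on the line, this bound being finite precisely because $P$ has no double characteristics. Then $u:=E_P*f$ solves $P(\D)u=f$, lies in $L^2_{loc}(\R^n)$, and --- applying the one-dimensional estimate to each $\widehat{E_k}(\eta,\cdot)*\widehat f(\eta,\cdot)$ without ever cutting off $f$ itself, so that $\operatorname{supp}f\subset D_s$ is fully exploited, then summing over $k$ --- satisfies \eqref{eq:53} with $C=C(P,n)$.

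The step I expect to be the real obstacle is making this finite-cover construction uniform over the \emph{non-compact} part of $\{P=0\}$: along a cone, a paraboloid or a cylinder the transversal directions and the root separations must be controlled at infinity, and the Fourier localizations must be organized (dyadically or conically) so that they remain harmless kernels after the partial Fourier transform. In the real second-order case this can be carried out explicitly from the classification of quadric characteristic varieties, which is presumably why that case is singled out; in general it is exactly what the admissibility hypothesis (Definition~\ref{admissibleSymbol}) provides, through a scaling-and-compactness argument near $\{P=0\}$ yielding the uniform transversality and separation bounds. Everything else is the bookkeeping of assembling the pieces and the two Cauchy--Schwarz inequalities above.
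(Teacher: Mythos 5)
Your overall strategy (partial Fourier transform in $\Theta^\perp$, uniform ODE estimates in the remaining variable, finitely many directions, Cauchy--Schwarz twice to produce $\sqrt{d_rd_s}$) is the same as the paper's, but there is a genuine gap in the key one-dimensional lemma, and it is not a repairable technicality of the covering argument: the lemma is false as stated. You assume only that the \emph{real} roots of $q$ are simple and $c$-separated and conclude $\norm{E}_{L^\infty}\le C(N,c)$. Take $q(\lambda)=\lambda^2+\delta^2$ (this is exactly $q_\eta$ for the Helmholtz operator at frequencies $\eta$ with $\abs{\eta}$ slightly larger than $k$, i.e.\ lines that pass \emph{near} the characteristic sphere without touching it): there are no real roots at all, so your hypothesis holds vacuously for every $c$, yet the bounded fundamental solution is $e^{-\delta\abs{t}}/(2\delta)$, whose $L^\infty$ norm blows up as $\delta\to0$. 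The degeneration of the ODE kernel is governed by near-collisions of \emph{complex} roots near the real axis (equivalently by smallness of $\abs{p'(\tau_j)}$ at complex roots, or of the discriminant), not by tangency of the line to the real variety $\{P=0\}$. Consequently your covering --- pieces where $\theta_k$ is transversal to the real characteristic variety, plus one piece where $\abs{P}$ is bounded below --- leaves an uncontrolled transition region (lines that narrowly miss the variety, or meet it where a complex pair degenerates), and no choice of transversal directions fixes this. This is precisely why the paper defines the bad sets $\mathscr{B}_{\Theta,\varepsilon}$ in \eqref{actualBadCylinderDef} via $\abs{p'(\tau_j)}\le\varepsilon$ over \emph{all complex} roots (in the second-order case via smallness of both $\Re Q_k$ and $\Im Q_k$), proves that finitely many such sets have empty intersection (Lemma \ref{th:zqeps}; Hilbert's basis theorem in Proposition \ref{algProp}), and handles the leftover \emph{compact} frequency region by direct division by $P$ (Proposition \ref{th:unplus1}), where the absence of real double characteristics is what guarantees $\abs{P}\ge\varepsilon$ there.

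A secondary point: your device of never cutting $f$, instead Fourier-localizing the fundamental solution into pieces $E_k$, does not remove the hard quantitative step; you then need $\sup_\eta\norm{\widehat{E_k}(\eta,\cdot)}_{L^\infty(dt)}<\infty$, i.e.\ uniform bounds on one-dimensional inverse Fourier transforms of cutoffs taken in directions that are neither parallel nor perpendicular to the direction in which the cutoff is constant. That is exactly the content of the paper's multiplier machinery (Lemma \ref{multiplierNorm}, Corollary \ref{constantInDirectionNorm}, Lemmas \ref{th:BasicEst}--\ref{th:qEst}, and Condition \ref{compactDirections} of Definition \ref{admissibleSymbol} in the higher-order case), which your proposal defers wholesale to ``admissibility''; the paper's explicit note that the pieces $f_k$ are no longer supported in $D_s$ is the same difficulty in dual form, and it is where most of the actual work lies.
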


If  \(f\) is not compactly supported, but  \(\supp
f\subset \mathop{\cup}\limits_{j=1}^{\infty} B_{j}\) where
each  \(B_{j}\) has finite diameter  \(b_{j}\), then
(\ref{eq:53}) becomes
\begin{equation}\nn
  ||u||_{L^2(D)}\le C\sqrt{d} \sum_{j=1}^{\infty}\sqrt{b_{j}}
  ||f||_{_{L^2(B_{j})}}
\end{equation}
  which we may rewrite as
  \begin{eqnarray}\label{eq:91}
  \sup_{D}\frac{1}{\sqrt{d}}||u||_{L^2(D)}\le
  \sum_{j=1}^{\infty}\sqrt{b_{j}} ||f||_{_{L^2(B_{j})}}.
  \end{eqnarray}

In the special case that \(D\) is a ball with a fixed
center and arbitrary radius; and the \(B_{j}\) include the
ball of radius one and the dyadic spherical shells
\(2^{j}<|x|<2^{j+1}\) for \(j\ge 0\), these are the
estimates of Agmon-H\"ormander in \cite{Agmon-Hormander}.
The weighted
\(L^2_{\delta}\) estimates introduced by Agmon are also
direct consequences of (\ref{eq:53}), so that these
solutions do radiate finite power and are therefore
physically meaningful. The solutions we construct are not
necessarily unique, but include the \textit{physically
  correct} solutions in all the cases we are aware of. For
the Helmholtz equation, for example, the solution which
satisfies the Sommerfeld radiation condition is among
those which satisfy the estimate \eqref{eq:53}.\\

Our estimates do not include the uniform \(L^p\) estimates
for the Helmholtz equation, shown below, which were
derived in \cite{MR0358216} and \cite{MR894584}, and
presented in \cite{ruiz-notes} and \cite{serov-notes}.

\begin{theorem*}[Uniform $L^p$ estimates]
   Let \(k>0\) and
\(\frac{2}{n}\ge
  \frac{1}{p} - \frac{1}{q}\ge\frac{2}{n+1}\)
for \(n\ge3\) and
\(1> \frac{1}{p} -\frac{1}{q}\ge\frac{2}{3}\)
for \(n=2\), where
  \(\frac{1}{q} + \frac{1}{p}=1\).
There exists a constant \(C(n,p)\), independent of \(k\), such
that, for smooth compactly supported \(u\)
\begin{equation}
\label{eq:84}
  ||u||_{L^{q}(\R^{n})}\le C(n,p)k^{n(\frac{1}{p}
    -\frac{1}{q})-2}||(\Delta+k^{2})u||_{L^{p}(\R^{n})}
\end{equation}
\end{theorem*}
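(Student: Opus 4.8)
We outline a proof; the plan is to scale away $k$, recast \eqref{eq:84} as an $L^p\to L^q$ bound for the outgoing Helmholtz resolvent, split off the part of the resolvent living away from the characteristic sphere, and handle the remaining, singular part by Stein's complex interpolation.

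\emph{Reduction.} First I would use the dilation $v(x)=u(x/k)$, for which $[(\Delta+1)v](x)=k^{-2}[(\Delta+k^2)u](x/k)$ and $\|v\|_{L^q}=k^{n/q}\|u\|_{L^q}$ (and likewise in $L^p$), so \eqref{eq:84} for arbitrary $k>0$ reduces to the case $k=1$, namely $\|u\|_{L^q(\R^n)}\le C\|(\Delta+1)u\|_{L^p(\R^n)}$ with $C=C(n,p)$. For $f=(\Delta+1)u$ with $u\in C_c^\infty(\R^n)$ the Fourier transforms are entire and $\widehat f(\xi)=(1-|\xi|^2)\widehat u(\xi)$, so $\widehat f$ vanishes on the characteristic sphere $\Sigma=\{|\xi|=1\}$; writing $m(\xi)=(1-|\xi|^2+i0)^{-1}$ for the outgoing resolvent symbol, this forces $m(D)f=u$ (the $\delta_\Sigma$-term of $m\,\widehat f$ vanishes because $\widehat f|_\Sigma=0$, and the principal-value term is the ordinary quotient $\widehat f/(1-|\xi|^2)$, continuous across $\Sigma$ for the same reason). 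Hence it is enough to prove the multiplier bound $\|m(D)f\|_{L^q}\le C\|f\|_{L^p}$ for $f\in C_c^\infty(\R^n)$ and the exponent pairs in the statement, for which $1/p+1/q=1$.

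\emph{The nonsingular part.} I would fix $\chi\in C_c^\infty(\R^n)$ with $\chi\equiv1$ near $\Sigma$ and $\operatorname{supp}\chi\subset\{1/2<|\xi|<2\}$, and split $m=m_{\mathrm n}+m_{\mathrm f}$ with $m_{\mathrm n}=\chi m$, $m_{\mathrm f}=(1-\chi)(1-|\xi|^2)^{-1}$. Since $m_{\mathrm f}$ is smooth and $m_{\mathrm f}(\xi)\langle\xi\rangle^2$ has bounded derivatives of all orders, $m_{\mathrm f}(D)=\bigl(m_{\mathrm f}(\xi)\langle\xi\rangle^2\bigr)(D)\circ\langle D\rangle^{-2}$ is a Mikhlin--H\"ormander multiplier (bounded on every $L^r$, $1<r<\infty$) composed with the Bessel potential $\langle D\rangle^{-2}$, which maps $L^p\to L^q$ whenever $0\le 1/p-1/q\le 2/n$ (Young's inequality when $1/p-1/q<2/n$, the sharp Sobolev/Hardy--Littlewood--Sobolev inequality at the endpoint; for $n=2$, up to but excluding $1/p-1/q=1$). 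So $m_{\mathrm f}(D)$ is already bounded $L^p\to L^q$ on the whole relevant range, and all the work is in the singular part $m_{\mathrm n}(D)$, which I would embed into the analytic family $T_z$ with multiplier $e^{z^2}\chi(\xi)(1-|\xi|^2+i0)^{z}$, so that $m_{\mathrm n}(D)=e^{-1}T_{-1}$. On each vertical line $\operatorname{Re}z=c$ with $c\ge0$ the multiplier is bounded on $\operatorname{supp}\chi$, with admissible growth in $|\operatorname{Im}z|$ (there $|1-|\xi|^2|^{c}\le C$, while $|(1-|\xi|^2+i0)^{i\operatorname{Im}z}|\le e^{\pi|\operatorname{Im}z|}$ is absorbed by $e^{z^2}$), so $\|T_z\|_{L^2\to L^2}\le C_c$ by Plancherel. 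On the line $\operatorname{Re}z=-\tfrac{n+1}{2}$ the convolution kernel $K_z$ of $T_z$ is a bounded function, uniformly on the line, so $\|T_z\|_{L^1\to L^\infty}\le C$: this is where the curvature of $\Sigma$ enters --- the outgoing Helmholtz kernel decays like $|x|^{-(n-1)/2}$ (the case $z=-1$), and for general $z$ stationary phase, using that $\Sigma$ has everywhere nonvanishing Gaussian curvature, gives decay $|x|^{-(n+1)/2-\operatorname{Re}z}$, which is $O(1)$ precisely at $\operatorname{Re}z=-\tfrac{n+1}{2}$; the cutoff $\chi$ makes $K_z$ real-analytic, hence bounded, near the origin, and the $\Gamma$-factors in the asymptotics are dominated by $e^{z^2}$.

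\emph{Interpolation and conclusion.} By Stein's interpolation theorem between these two lines, evaluated at $z=-1$ (which lies strictly between $-\tfrac{n+1}{2}$ and $c$), $m_{\mathrm n}(D)$ maps $L^p\to L^q$ with $1/p+1/q=1$ and $1/p-1/q=(c+1)/(c+\tfrac{n+1}{2})$; letting $c$ run over $[0,\infty)$, this exponent difference increases continuously from $\tfrac{2}{n+1}$ toward $1$, hence covers all of $[\tfrac{2}{n+1},\tfrac{2}{n}]$ (and, for $n=2$, the range $[\tfrac23,1)$ of the theorem). Combining with the bound for $m_{\mathrm f}(D)$ and undoing the scaling yields \eqref{eq:84}. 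The one genuinely substantial step is the $L^1\to L^\infty$ kernel estimate on the line $\operatorname{Re}z=-\tfrac{n+1}{2}$ --- an oscillatory-integral bound encoding the curvature of the characteristic variety; the rest is Plancherel, Young's inequality, the Sobolev embedding, and bookkeeping.
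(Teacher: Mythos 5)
The paper does not actually prove this theorem: it is quoted as a known result, with citations to \cite{MR0358216} and \cite{MR894584}, and the authors explicitly remark that their own translation-invariant, mixed-norm machinery does not recover it because it makes no use of ellipticity or of the curvature of the characteristic sphere. Your sketch is therefore not comparable to anything in the paper; it is essentially the classical proof from the cited literature (the Kenig--Ruiz--Sogge uniform Sobolev argument restricted to the dual line \(1/p+1/q=1\)), and as an outline it is correct. The scaling step gives exactly the factor \(k^{n(1/p-1/q)-2}\); the identification \(u=m(D)f\) is legitimate since \(\widehat f=(1-|\xi|^2)\widehat u\) with \(\widehat u\) Schwartz, so the \(\delta_\Sigma\) contribution drops out; the far part is a Mikhlin multiplier composed with \(\langle D\rangle^{-2}\), which covers \(0\le 1/p-1/q\le 2/n\) (and \(<1\) for \(n=2\)); and the interpolation bookkeeping is right: between \(\mathrm{Re}\,z=c\ge0\) (an \(L^2\to L^2\) bound by Plancherel, with the \(e^{\pi|\mathrm{Im}\,z|}\) growth absorbed by \(e^{z^2}\)) and \(\mathrm{Re}\,z=-\tfrac{n+1}{2}\) (an \(L^1\to L^\infty\) kernel bound), evaluation at \(z=-1\) yields \(1/p-1/q=(c+1)/(c+\tfrac{n+1}{2})\), which sweeps out \([\tfrac{2}{n+1},1)\) as \(c\) runs over \([0,\infty)\), covering the stated ranges. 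The one step you defer --- the uniform boundedness of the kernel \(K_z\) on the line \(\mathrm{Re}\,z=-\tfrac{n+1}{2}\), i.e.\ the stationary-phase asymptotics of the Fourier transform of \(\chi(\xi)(1-|\xi|^2+i0)^z\) with the \(\Gamma\)-factor growth in \(\mathrm{Im}\,z\) dominated by \(e^{z^2}\) --- is indeed where all the analytic content lives, and your sketch correctly identifies it as the crux rather than claiming it follows from soft estimates; as written, though, it is an appeal to a known oscillatory-integral computation rather than a proof. In short: correct outline of the standard argument, orthogonal in spirit to the paper's methods, which deliberately trade such curvature-based \(L^p\) gains for invariant \(L^2\) estimates valid for non-elliptic, simply characteristic operators.
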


The estimates for the Helmholtz equation in \eqref{eq:84}
share all the invariance properties of \eqref{eq:53}, and
are stronger for small scatterers and applications to
nonlinear problems. The dependence on the wavenumber
\(k\), however, is not as well-suited to applications where the
sources are supported on sets that are several 
wavelengths in size and located far apart, nor do they
have a direct physical interpretation in terms of power.
Additionally, it seems reasonable that the estimate of the
solution in the higher  \(L^{p}\) norm indicates a gain in
regularity. Our methods don't require, or make use of ellipticity,
so we don't expect to recover these estimates.\\

Our methods make use of certain anisotropic norms
introduced in \cite{scaleinvariant} for the Helmholtz
equation. Those estimates were scale and translation
invariant, but, due to the anisotropy, not rotationally
invariant. We show here that a consequence of these mixed
norm estimates is \eqref{eq:53}, which is rotationally
invariant and much simpler than the mixed norm estimates
used to derive it.  Because of the generality, the
mixed norms we use here must be slightly different than
those in in \cite{scaleinvariant}, and the techniques
required to treat more general operators are substantially
more complicated.

We treat only operators with simple characteristics
because a \textit{bona fide} real multiple characteristic
(a real \(\eta\in\R^{n}\)
where the symbol \(p(\eta)\)
and \(\nabla p(\eta)\)
vanish simultaneously) will imply that our techniques
cannot succeed. In Section \ref{sec:examples}, we show
that estimates of the form (\ref{eq:53}) cannot hold for
the Laplacian, which has a double characteristic at the
origin. 

For a single second order operator with real
constant coefficients we will show in Theorem
\ref{th:main} that the absence of multiple characteristics
is sufficient to conclude the estimate
(\ref{eq:53}). Under some additional hypotheses, we will
prove the same estimate for some higher order operators in
Theorem \ref{th:other}.  Additionally, we will prove the
estimate (\ref{eq:53}) for the 4x4 Dirac system, and for a
scalar 4th order equation where H\"ormander's
\textit{uniformly simply characteristic} condition fails.

\section{The Helmholtz case}

We will illustrate our methods by outlining
the proof of (\ref{eq:53}) for the outgoing solution to
the Helmholtz equation below.
\begin{eqnarray}
  \label{eq:13}
  (\Delta + k^2)u = f
\end{eqnarray}
We will choose a direction \(\Theta\) and write
\(x=t\Theta+\tp{x}\). We next Fourier transform in the
\(\Theta^\perp\) hyperplane to rewrite (\ref{eq:13}) as
an ordinary differential equation. We use the notation
\(\f{\Theta^\perp}{u}(t\Theta+\tp{\xi})\) to indicate this
partial Fourier transform (see \eqref{eq:68} below for a
formal definition). If we  set  \(g(t,\tp{\xi}) =
\f{\Theta^\perp}{f}(t\Theta+\tp{\xi})\)  and  \(w(t,\tp{\xi}) =
\f{\Theta^\perp}{u}(t\Theta+\tp{\xi})\), then \eqref{eq:13} becomes
\begin{eqnarray}
  \label{eq:59}
  (\partial_t^2 + k^2 - \abs{\xi_{\Theta^\perp}}^2)
  w = g
\end{eqnarray}

We factor the second order operator as a product
of first order operators
\begin{eqnarray}\nn
\left(\partial_t + i\sqrt{k^2 - \abs{\xi_{\Theta^\perp}}^2}\right) 
  \left(\partial_t - i\sqrt{k^2 - \abs{\xi_{\Theta^\perp}}^2}\right) 
  w = g
\end{eqnarray}
and define a solution  \(w=w_{1}+w_{2}\) where  \(w_{1}\) and  \(w_{2}\) solve  
\begin{eqnarray}\label{modelProblems}
\begin{cases}
  \Big(\partial_t + i\sqrt{k^2 - \abs{\xi_{\Theta^\perp}}^2}\Big) 
  w_{1} = \frac{i g}{ 2\sqrt{k^2 - 
  \abs{\xi_{\Theta^\perp}}^2}}, 
\\ 
\Big(\partial_t - i\sqrt{k^2 - 
  \abs{\xi_{\Theta^\perp}}^2}\Big) w_{2} = \frac{-i 
  g}{ 2\sqrt{k^2 - \abs{\xi_{\Theta^\perp}}^2}}.
\end{cases}
\end{eqnarray}
The solutions  \(w_{1}\) and  \(w_{2}\) are  given by the exact formulas 
\begin{eqnarray}\label{eq:60}
 w_{1}(t,\tp{\xi})&=&\frac{1}{2\sqrt{k^2 -
        \abs{\xi_{\Theta^\perp}}^2}}
    \Int_{-\infty}^{t}
    e^{i\sqrt{k^2 -\abs{\xi_{\Theta^\perp}}^2}(t-s)}\
    i g(s,\tp{\xi})ds
\\\label{eq:69}
w_{2}(t,\tp{\xi})&=&\frac{1}{2\sqrt{k^2 -
        \abs{\xi_{\Theta^\perp}}^2}}
    \Int_{t}^{\infty}
    e^{-i\sqrt{k^2 -\abs{\xi_{\Theta^\perp}}^2}(t-s)}\
    i g(s,\tp{\xi})
        ds
\end{eqnarray}
The square root
\(\sqrt{k^2-\abs{\xi_{\Theta^\perp}}^2}\)
is chosen so that it always has positive imaginary part
for imaginary part of \(k\) positive, and extends continuously,
as a function of  \(k\),  to the real axis. This insures
that the exponential in (\ref{eq:60}) and (\ref{eq:69}) is
bounded by one\footnote{This also selects the unique outgoing solution, which
  satisfies the Sommerfeld radiation condition.}
so that  \(\f{\Theta^\perp}{u} = w = w_{1} + w_{2}\)  satisfies
\begin{eqnarray}
  \label{eq:61}
  |\f{\Theta^\perp}{u}(t\Theta+\tp{\xi})|\le
  \frac{||\f{\Theta^\perp}{f}(s\Theta+\tp{\xi})||_{\L1(ds)}}
  {\sqrt{k^2-\abs{\xi_{\Theta^\perp}}^2}} 
\end{eqnarray}
which would yield a simple estimate if the denominator had
a lower bound.

In sections \ref{sec:order2} and  \ref{sec:ordern}, we
will construct Fourier multipliers that implement a
partition of unity that 
decomposes  \(f\) into a sum 
\begin{eqnarray}
  f&=&f_{1}+f_{2}+\ldots+f_{m}
\\  \label{eq:85}
 &=&f\phi_{1}+f\phi_{2}(1-\phi_{1})
     +f\phi_{3}(1-\phi_{2})(1-\phi_{1})\ldots+f\prod_{j=1}^{m}(1-\phi_{j})
\end{eqnarray}
such that, for each \(f_{j}\), there is a direction
\(\Theta_{j}\) such that  
\begin{eqnarray}
  \label{eq:65}
  \inf_{\supp
  \f{\Theta_{j}^\perp}{f}}\sqrt{k^2-\abs{\xi_{\Theta_{j}^\perp}}^2}
  &\ge& \varepsilon k
\\\na{and}\nn
\norm{\norm{\f{\Theta_{j}^\perp}{f_{j}}(s\Theta_j+\xi_{\Theta_j^\perp})}_{\L1(ds)}}_{\L2(d\xi_{\Theta_j^\perp})}
&\le&\nn
  \norm{\norm{\f{\Theta_{j}^\perp}{f}(s\Theta_j+\xi_{\Theta_j^\perp})}_{\L1(ds)}}_{\L2(d\xi_{\Theta_j^\perp})}
\\\na{which we write more compactly as}\label{eq:67}
\norm{\f{\Theta_{j}^\perp}{f_{j}}}_{\Theta_{j}(1,2)}&\le&\norm{\f{\Theta_{j}^\perp}{f}}_{\Theta_{j}(1,2)}
\end{eqnarray}
using norms which we will define precisely in
(\ref{eq:76}).

\begin{figure}
\begin{center}
  \includegraphics{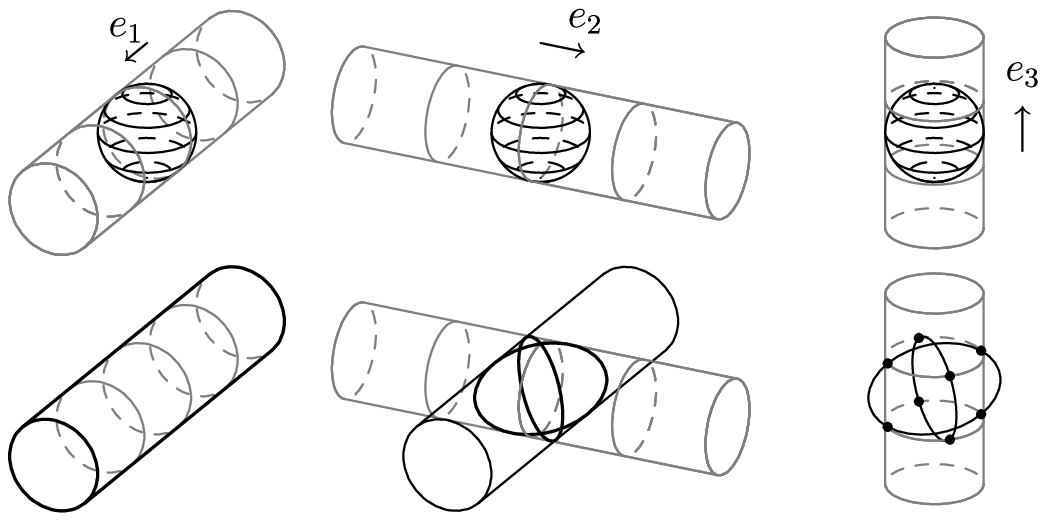}
\end{center}
\caption{Partition of unity for $\Delta+k^2$.}
\label{partitionUnityProcedureFig}
\end{figure}

We illustrate this decomposition for the 3-dimensional
case in Figure \ref{partitionUnityProcedureFig}. Let
$\Theta_j=e_j$, $j=1,2,3$, be an orthogonal basis. The
cylinders illustrated in the
top-row are the sets,  denoted $\mathscr{B}_{\Theta_{j},0}$, where the
denominators $\sqrt{k^2 - \abs{\xi_{\Theta_{j}^\perp}}^2}$
of \eqref{eq:61} vanish.  Each \(\phi_{j}\), and hence each \(f_{j}\),
 vanishes in a neighborhood of
$\mathscr{B}_{\Theta_{j},0}$. The thick lines in the
figures in the bottom row show the intersections
$\mathscr{B}_{\Theta_1,0}$,
$\mathscr{B}_{\Theta_1,0} \cap \mathscr{B}_{\Theta_2,0}$
and
$\mathscr{B}_{\Theta_1,0} \cap \mathscr{B}_{\Theta_2,0}
\cap \mathscr{B}_{\Theta_3,0}$, indicating the
support of the \(\prod_{j=1}^{m}(1-\phi_{j})\). To
guarantee that the  \(f_{j}\) sum to  \(f\), the
intersection of (neighborhoods of) all the 
\(\mathscr{B}_{\Theta_{j},0}\) must be empty. We see in the figure that
the intersection of the first three neighborhoods consists
of neighborhoods of eight points, so we may add a fourth direction, 
for example
$\Theta_4 = (e_1+e_2+e_3)/\sqrt{3}$ (not pictured), so that the
corresponding cylinder $\mathscr{B}_{\Theta_4,0}$ does not
intersect the eight points that are left.\\

Combining \eqref{eq:61}, \eqref{eq:65}, and \eqref{eq:67}
\begin{eqnarray}
  \label{eq:66}
  ||\f{\Theta_{j}^\perp}{u_{j}}||_{\Theta_{j}(\infty,2)}\le
  \frac{||\f{\Theta_{j}^\perp}{f}||_{\Theta_{j}(1,2)}}{k\varepsilon} 
\end{eqnarray}
where each of the  \(u_{j}\) solves  \((\Delta+k^{2})u_{j}=f_{j}\).  
The estimates (\ref{eq:66}) estimate each \(u_{j}\) in a
different norm, and the norms, which  depend on a choice of
 the vectors \(\Theta_{j}\), are no longer
rotationally invariant. They can, however, be combined to
yield an estimate in a single norm that is
rotationally and translationally invariant.
\begin{lemma}\label{mixedToUniformNorms}
  Let $D_s, D_r \subset \R^n$ be domains with diameters $d_s$ and $d_r$, 
  respectively. Let $\f{\Theta^\perp}{u} \in \Theta(\infty,2)$ and $f 
  \in L^2_{loc}$. Assume that $\supp f \subset D_s$. Then $u_{\mid D_r} 
  \in L^2$ and $\f{\Theta^\perp}{f} \in \Theta(1,2)$. Moreover
  \begin{eqnarray}
\nn
  \norm{u}_{L^2(D_r)} \le \sqrt{d_r} \norm{ \f{\Theta^\perp}{u} }_{\Theta(\infty,2)},    
\\
    \label{eq:71}
  \norm{ \f{\Theta^\perp}{f} }_{\Theta(1,2)} \le \sqrt{d_s} \norm{ f }_{L^2(D_s)}.  
  \end{eqnarray}
\end{lemma}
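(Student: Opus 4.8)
The plan is to prove the two inequalities independently; both come down to Plancherel's theorem on the hyperplane $\Theta^\perp$ together with a one–dimensional Cauchy--Schwarz inequality in the coordinate $t$ along $\Theta$ (the variable of the ODE \eqref{eq:59}), the only geometric input being that the one–dimensional Lebesgue measure of the orthogonal projection of a bounded domain $D$ onto a line is at most the diameter $d$ of $D$. The side statements $u_{\mid D_r}\in\L2$ and $\f{\Theta^\perp}{f}\in\Theta(1,2)$ then simply record the finiteness of the corresponding right–hand sides.

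For the first inequality I would write $x=t\Theta+\tp{x}$ and observe that, for each fixed $t$, the slice $\f{\Theta^\perp}{u}(t\Theta+\cdot\,)$ is exactly the Fourier transform on $\Theta^\perp$ of $u(t\Theta+\cdot\,)$. Setting $M(\tp{\xi}):=\sup_{t}\abs{\f{\Theta^\perp}{u}(t\Theta+\tp{\xi})}$, the hypothesis says $M\in\L2(\Theta^\perp)$ with $\norm{M}_{\L2(\Theta^\perp)}=\norm{\f{\Theta^\perp}{u}}_{\Theta(\infty,2)}$; since $\abs{\f{\Theta^\perp}{u}(t\Theta+\tp{\xi})}\le M(\tp{\xi})$ for all $t$, each slice $\f{\Theta^\perp}{u}(t\Theta+\cdot\,)$ lies in $\L2(\Theta^\perp)$, hence so does $u(t\Theta+\cdot\,)$ by inversion, and Plancherel gives $\norm{u(t\Theta+\cdot\,)}_{\L2(\Theta^\perp)}=\norm{\f{\Theta^\perp}{u}(t\Theta+\cdot\,)}_{\L2(\Theta^\perp)}\le\norm{\f{\Theta^\perp}{u}}_{\Theta(\infty,2)}$ for a.e.\ $t$. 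Slicing $D_r$ by the hyperplanes $t\Theta+\Theta^\perp$ and using Tonelli (all integrands $\ge0$),
\[
\norm{u}_{\L2(D_r)}^2=\int_{\R}\int_{\{\tp{x}\,:\,t\Theta+\tp{x}\in D_r\}}\abs{u(t\Theta+\tp{x})}^2\,d\tp{x}\,dt\le\int_{I}\norm{u(t\Theta+\cdot\,)}_{\L2(\Theta^\perp)}^2\,dt\le\abs{I}\,\norm{\f{\Theta^\perp}{u}}_{\Theta(\infty,2)}^2,
\]
where $I=\{t:(t\Theta+\Theta^\perp)\cap D_r\neq\emptyset\}$ is the projection of $D_r$ onto $\R\Theta$, so $\abs{I}\le d_r$; taking square roots finishes this part.

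For the second inequality, $f\in\L2_{loc}$ with $\supp f\subset D_s$ bounded gives $f\in\L2(\R^n)\subset\L1(\R^n)$, so $\f{\Theta^\perp}{f}$ is a continuous function whose slice $\f{\Theta^\perp}{f}(t\Theta+\cdot\,)$ is the Fourier transform of $f(t\Theta+\cdot\,)$ on $\Theta^\perp$ and which vanishes identically unless $(t\Theta+\Theta^\perp)\cap D_s\neq\emptyset$, i.e.\ unless $t\in J:=\{t:(t\Theta+\Theta^\perp)\cap D_s\neq\emptyset\}$, with $\abs{J}\le d_s$. Hence, for each fixed $\tp{\xi}$, the function $t\mapsto\f{\Theta^\perp}{f}(t\Theta+\tp{\xi})$ is supported in $J$, and Cauchy--Schwarz in $t$ gives $\norm{\f{\Theta^\perp}{f}(\,\cdot\,\Theta+\tp{\xi})}_{\L1(dt)}\le\sqrt{d_s}\,\norm{\f{\Theta^\perp}{f}(\,\cdot\,\Theta+\tp{\xi})}_{\L2(dt)}$. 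Squaring, integrating in $\tp{\xi}$, and applying Tonelli and then Plancherel on $\Theta^\perp$ slicewise,
\[
\norm{\f{\Theta^\perp}{f}}_{\Theta(1,2)}^2\le d_s\int_{\Theta^\perp}\int_{\R}\abs{\f{\Theta^\perp}{f}(t\Theta+\tp{\xi})}^2\,dt\,d\tp{\xi}=d_s\,\norm{f}_{\L2(\R^n)}^2=d_s\,\norm{f}_{\L2(D_s)}^2 .
\]

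There is no single hard estimate here; the work is in the bookkeeping, and the two points I would be most careful about are the following. First, the measure theory around the slicewise Plancherel step: that $\tp{\xi}\mapsto\sup_t\abs{\f{\Theta^\perp}{u}(t\Theta+\tp{\xi})}$ is measurable, that $\norm{u(t\Theta+\cdot\,)}_{\L2(\Theta^\perp)}=\norm{\f{\Theta^\perp}{u}(t\Theta+\cdot\,)}_{\L2(\Theta^\perp)}$ holds and may be integrated in $t$ as written, and that the Tonelli interchanges are legitimate (they are, since the integrands are nonnegative and the slices have been shown to be in $\L2(\Theta^\perp)$). Second — and this is the real point — one must make sure the geometric factor that emerges, the measure of the projection of the domain onto $\R\Theta$, is no larger than the diameter $d$; this is why the slicing is taken in the direction $\Theta$ itself, so that $\abs{I}$ and $\abs{J}$ are controlled by the definition of $d$ as a supremum over all lines (and it is completely transparent when the domains are convex, where $d$ is the ordinary diameter). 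Estimating $\abs{u}^2$ on the cross-sections directly, without Plancherel, would force the use of the purely spatial mixed norm of $u$ and would not reproduce the Fourier-side norm $\norm{\f{\Theta^\perp}{u}}_{\Theta(\infty,2)}$, which is the quantity actually furnished by \eqref{eq:66}.
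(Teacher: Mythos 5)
Your proof is correct and is essentially the paper's own argument: slice in the direction $\Theta$, apply Plancherel on each hyperplane slice, bound by the supremum in $t$ for the first inequality and use Cauchy--Schwarz in $t$ on the $t$-support of $\f{\Theta^\perp}{f}$ for the second, with the geometric input that the projection of the domain onto $\R\Theta$ has measure at most the diameter. The only difference is that you spell out the measurability/Tonelli bookkeeping that the paper leaves implicit.
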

\noindent Combining the lemma with (\ref{eq:66}) yields
\begin{eqnarray}
  \label{eq:6}
  \norm{u}_{L^2(D_r)} \le
  \frac{\sqrt{d_rd_{s}}}{\varepsilon k}\norm{ f }_{L^2(D_s)}.
\end{eqnarray}
We leave the proof of the lemma for the next section,
after we have given the formal definitions of the norms.

\section{Mixed norms}

  We begin with the formal definition of the anisotropic
  norms we will use.

\begin{definition}\label{variableSplittingDef}
  Let $\Theta\in\mathbb{S}^{n-1}(\R^n)$. We split any
  $x\in\R^n$ as 
  \begin{eqnarray}
    \label{eq:83}
    x = t\Theta+x_{\Theta^\perp}
  \end{eqnarray}
where
  $t = x\cdot\Theta$ and
  $x_{\Theta^\perp} = x - (x\cdot\Theta)\Theta$. We split
  the dual variable $\xi$ as
  \begin{eqnarray}
\nn
    \xi = \tau \Theta + \xi_{\Theta^\perp}
  \end{eqnarray}
   The variables
  $t$ and $\tau$ are dual, and so are $x_{\Theta^\perp}$
  and $\xi_{\Theta^\perp}$.
\end{definition}  

\begin{figure}
\begin{center}
  \includegraphics{Fig2}
\end{center}
\caption{Splitting of $\xi = \tau\Theta+\tp{\xi}$.}
\end{figure}

\begin{definition}\label{fourierTransformDef}
  By $\f{\Theta}{}$ we denote the one-dimensional Fourier transform along 
  the direction $\Theta$. If $f\in\mathscr{S}(\R^n)$ then
\[
  \f{\Theta}{f}(\tau\Theta+x_{\Theta^\perp}) = \frac{1}{\sqrt{2\pi}} 
  \int_{-\infty}^\infty e^{-it\tau} f(t\Theta+x_{\Theta^\perp}) dt
\]
  using the notation of Definition \ref{variableSplittingDef}. The 
  Fourier transform in the orthogonal space $\Theta^\perp$ is denoted by 
  $\f{\Theta^\perp}{}$ and it acts by
  \begin{eqnarray}\label{eq:68}
  \f{\Theta^\perp}{f}(t\Theta+\xi_{\Theta^\perp}) = 
  \frac{1}{(2\pi)^{(n-1)/2}} \int_{\Theta^\perp} e^{-i x_{\Theta^\perp} 
  \cdot \xi_{\Theta^\perp}} f(t\Theta + x_{\Theta^\perp}) 
  dx_{\Theta^\perp}.
  \end{eqnarray}
  The corresponding inverse transforms are denoted by $\finv{\Theta}{}$ 
  and $\finv{\Theta^\perp}{}$.
\end{definition}

\begin{definition}\label{mixedNormSpaceDef}
  We use $\Theta(p,q)$ we denote the space of $L^p(dt)$-valued 
  $L^q(dx_{\Theta^\perp})$-functions (if the variable is 
  $x=t\Theta+x_{\Theta^\perp}$). More precisely $f\in\Theta(p,q)$ if

  \begin{eqnarray}\label{eq:76}
  \norm{f}_{\Theta(p,q)} = \left( \int_{\Theta^\perp} \left( 
  \int_{-\infty}^\infty \abs{f(t\Theta+x_{\Theta^\perp})}^p dt 
  \right)^{q/p} dx_{\Theta^\perp} \right)^{1/q} < \infty.
  \end{eqnarray}
  with obvious modifications for $p=\infty$ or
    $q=\infty$.

\end{definition}

\begin{remark}
  We make note of the fact that the order is important.
For example,
  we will use the norm
\[
  \norm{f}_{\Theta(1,\infty)} = \sup_{x_{\Theta^\perp}\in\Theta^\perp} 
  \int_{-\infty}^\infty \abs{f(t\Theta+x_{\Theta^\perp})} dt
\]
  in several lemmas. This is clearly not the same as
\[
  \int_{-\infty}^\infty \ \sup_{x_{\Theta^\perp}\in\Theta^\perp} 
  \abs{f(t\Theta+x_{\Theta^\perp})} dt
\]
\end{remark}

We convert estimates in these anisotropic norms
to isotropic  \(L^{2}\) estimates with Lemma
\ref{mixedToUniformNorms}. We give the proof now.

\begin{proof}[Proof of Lemma \ref{mixedToUniformNorms}]
  Let $D^\Theta_s$, $D^\Theta_r$ be the projections of $D_s$ and $D_r$ 
  onto the line $t \mapsto t\Theta$.   We have
\begin{align*}
  &\norm{u}_{L^2(D_r)}^2 \le \int_{D^\Theta_r} \int_{\Theta^\perp} 
  \abs{u(t\Theta+x_{\Theta^\perp})}^2 dx_{\Theta^\perp} dt
  \\
  &\qquad = 
  \int_{D^\Theta_r} \int_{\Theta^\perp} 
  \abs{\f{\Theta^\perp}{u}(t\Theta+\xi_{\Theta^\perp})}^2 
  d\xi_{\Theta^\perp} dt 
  \\
  &\qquad \le \int_{D^\Theta_r} dt
  \int_{\Theta^\perp} \sup_{t'} 
  \abs{\f{\Theta^\perp}{u}(t'\Theta+\xi_{\Theta^\perp})}^2 
  d\xi_{\Theta^\perp} \le d_r \norm{ \f{\Theta^\perp}{u} 
  }_{\Theta(\infty,2)}^2
\end{align*}
where we have used the Plancherel formula and the
hypothesis that the diameter of $D_r$ is at most
\(d_{r}\), which implies that \(D^\Theta_r\) is contained
in a union of intervals of length at most \(d_{r}\). The
proof of (\ref{eq:71}) is similar, and makes use of the
fact that \(D^\Theta_s\) is contained in a union of
intervals of length less than \(d_{s}\).
\[
  \int_{-\infty}^\infty \abs{ 
  \f{\Theta^\perp}{f}(t\Theta+\xi_{\Theta^\perp})} dt \le 
  \sqrt{d_{s}} \norm{ 
  \f{\Theta^\perp}{f}(t\Theta+\xi_{\Theta^\perp}) }_{L^2(dt)}
\]
   The inequality (\ref{eq:71})  follows by taking the 
  $L^2(d\xi_{\Theta^\perp})$-norm and using Fubini's theorem, and then 
  the Plancherel formula.
\end{proof}

\begin{remark}\label{mixedToUniformNormsLp} 
  Let $\frac{1}{q}+\frac{1}{p}=1$ and $q \geqslant 2$. An
  analogous argument shows that
  $\norm{u}_{L^q(D_r)} \le d_r^{1/q} \norm{ \f{\Theta^\perp}{u} 
  }_{\Theta(\infty,p)}$ and $\norm{ \f{\Theta^\perp}{f} }_{\Theta(1,p)} 
  \le d_s^{1/p} \norm{ \f{}{f} }_{L^p(\R^n)}$. 
\end{remark}

\section{Fourier Multiplier Estimates}

\begin{definition}\label{multiplierDef}
  Let $\Psi:\R^n\to\C$ be locally integrable. We define the
  Fourier multiplier $M_\Psi$ as the
  the operator
\[
  M_\Psi f = \finv{}{\{\Psi \f{}{f}\}}.
\]  
\end{definition}

Because our estimates rely on decompositions of sources
similar to \eqref{eq:85} where \(f_{j}= M_{\Psi_{j}}f\)
must satisfy satisfy conditions similar to \eqref{eq:65}
and the estimate \eqref{eq:67}, we need to establish the
boundedness of these Fourier multipliers on the mixed
norms of the partial Fourier transforms of the sources,
i.e.  on \(\norm{\ftp{f(t,\xi)}}_{\Theta(1,2)}\). Our first
lemma tells us that 
\(\norm{\finv{\Theta}{\Psi}}_{\Theta(1,\infty)}<\infty\)
  is enough to guarantee such a bound.

\begin{lemma}\label{multiplierNorm}
  Let $\finv{\Theta}{\Psi} \in \Theta(1,\infty)$. Then
\[
  \norm{ \f{\Theta^\perp}{M_\Psi f} }_{\Theta(1,p)} \le 
  \frac{1}{\sqrt{2\pi}} \norm{ \finv{\Theta}{\Psi} }_{\Theta(1,\infty)} 
  \norm{ \f{\Theta^\perp}{f} }_{\Theta(1,p)}.
\]
\end{lemma}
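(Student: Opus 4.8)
\textbf{Proof proposal for Lemma \ref{multiplierNorm}.}

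The plan is to write the operator $\f{\Theta^\perp}{M_\Psi f}$ explicitly as a convolution in the $t$-variable, with a kernel depending on the transverse frequency $\xi_{\Theta^\perp}$, and then apply Young's inequality in $t$ followed by Minkowski's integral inequality in $\xi_{\Theta^\perp}$. First I would observe that the multiplier $M_\Psi$ acts by multiplication by $\Psi(\tau\Theta + \xi_{\Theta^\perp})$ on the full Fourier transform. Commuting the transverse Fourier transform $\f{\Theta^\perp}{}$ past $M_\Psi$, we see that on the partial transform $\f{\Theta^\perp}{f}(t\Theta + \xi_{\Theta^\perp})$ the operator acts, for each fixed $\xi_{\Theta^\perp}$, as the one-dimensional Fourier multiplier in $t$ with symbol $\tau \mapsto \Psi(\tau\Theta + \xi_{\Theta^\perp})$. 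Hence
\[
  \f{\Theta^\perp}{M_\Psi f}(t\Theta + \xi_{\Theta^\perp}) = \frac{1}{\sqrt{2\pi}} \int_{-\infty}^\infty \finv{\Theta}{\Psi}((t-s)\Theta + \xi_{\Theta^\perp})\, \f{\Theta^\perp}{f}(s\Theta + \xi_{\Theta^\perp})\, ds,
\]
the $\frac{1}{\sqrt{2\pi}}$ coming from the normalization of the one-dimensional transform in Definition \ref{fourierTransformDef}. I would justify this identity first for Schwartz $f$ and then extend by density, or simply treat it as the defining manipulation since all quantities on the right are finite under the hypotheses once we take norms.

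Next, for each fixed $\xi_{\Theta^\perp}$, I apply Young's convolution inequality in the $t$-variable with exponents $1 * 1 \to 1$ (convolving an $\L1(dt)$ function with an $\L1(dt)$ function gives an $\L1(dt)$ function), giving
\[
  \norm{ \f{\Theta^\perp}{M_\Psi f}(\cdot\,\Theta + \xi_{\Theta^\perp}) }_{\L1(dt)} \le \frac{1}{\sqrt{2\pi}} \norm{ \finv{\Theta}{\Psi}(\cdot\,\Theta + \xi_{\Theta^\perp}) }_{\L1(dt)}\, \norm{ \f{\Theta^\perp}{f}(\cdot\,\Theta + \xi_{\Theta^\perp}) }_{\L1(dt)}.
\]
Then I bound the first factor on the right by its supremum over $\xi_{\Theta^\perp}$, which is precisely $\norm{ \finv{\Theta}{\Psi} }_{\Theta(1,\infty)}$, pull that constant out, and take the $\L p(d\xi_{\Theta^\perp})$-norm of what remains; since the remaining factor is $\norm{ \f{\Theta^\perp}{f}(\cdot\,\Theta + \xi_{\Theta^\perp}) }_{\L1(dt)}$, its $\L p(d\xi_{\Theta^\perp})$-norm is exactly $\norm{ \f{\Theta^\perp}{f} }_{\Theta(1,p)}$, and the lemma follows. (For $p = \infty$ the last step is a supremum instead of an integral, with no change.)

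The main obstacle, such as it is, lies in justifying the convolution identity rigorously: one must check that $\finv{\Theta}{\Psi}$, which is only assumed to lie in $\Theta(1,\infty)$ — i.e. to be a one-dimensional $\L1$-function in $t$ uniformly in $\xi_{\Theta^\perp}$ — really is the kernel of $M_\Psi$ acting in the $t$-direction, and that the partial and full Fourier transforms may be interchanged in the required way. I would handle this by first assuming $f$ Schwartz so that all transforms are classical and Fubini applies freely, establishing the displayed convolution formula there, and then noting that the right-hand side of the final inequality is finite whenever $\f{\Theta^\perp}{f} \in \Theta(1,p)$, so the estimate extends to that class by density. No genuine difficulty is expected beyond this bookkeeping; the analytic content is just Young's inequality plus Minkowski's inequality.
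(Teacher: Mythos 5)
Your argument is correct and is essentially the paper's own proof: write $\f{\Theta^\perp}{M_\Psi f} = \frac{1}{\sqrt{2\pi}}\finv{\Theta}{\Psi} \ast_t \f{\Theta^\perp}{f}$, apply Young's inequality in $t$ for each fixed $\xi_{\Theta^\perp}$, bound the kernel factor by its $\Theta(1,\infty)$-norm, and then take the $L^p(d\xi_{\Theta^\perp})$-norm. The extra care you take about justifying the convolution identity on Schwartz functions and extending by density is fine but not a departure from the paper's route.
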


\begin{proof}
  Write $\f{\Theta^\perp}{M_\Psi f} = \finv{\Theta}{\{ \Psi \f{}{f} \}} 
  = \frac{1}{\sqrt{2\pi}} \finv{\Theta}{\Psi} \ast_t 
  \f{\Theta^\perp}{f}$. Then take the $L^1(dt)$-norm and use Young's 
  inequality for convolutions. The result follows then by taking the 
  $L^p(d\xi_{\Theta^\perp})$-norm.
\end{proof}

Our Fourier multipliers will not be Schwartz class
functions. They will be smooth, but will always be
constant in a direction $\nu$, so the integrability
properties necessary to verify that the
\(\Theta(1,\infty)\) norm is finite may be a bit subtle,
and will depend on the relation between the direction
\(\nu\) of that coordinate and the direction \(\Theta\)
which defines the relevant norm. The estimates will be
simplest when the directions \(\nu\) and \(\Theta\)
coincide, or are perpendicular. Because second order
operators have a convenient normal form, the decompositions in
Section \ref{sec:order2} will only require multipliers
with \(\nu\) and \(\Theta\) either identical or
perpendicular. Higher order operators do not admit such
simple normal forms, so the decompositions are based on
abstract algebraic properties, and we cannot, in general, restrict to
these simple cases. The next proposition, and
its corollary, tell us how to reduce the
\(\Theta(1,\infty)\) estimate for the norm of a multiplier
that is 
constant in the \(\nu\) direction, to the case where
\(\nu\) and  \(\Theta\) are either parallel or perpendicular.\\  
  
We need a little notation first.
  Define $\nu_\perp$ to be a unit vector in the \((\Theta,\nu)\) plane 
  perpendicular to \(\nu\) so that the pair
  \((\nu,\nu_{\perp})\) is positively oriented , and
  define $\Theta_\perp$ analogously to be the
  unit vector in that plane perpendicular to 
  \(\Theta\). Finally, let $\xi_{\perp\perp}$ denote the
  component of  any \(\xi\in\R^{n}\) perpendicular to the
  \((\Theta,\nu)\) plane .

\begin{proposition}\label{fourierTwoDirections}
  Let $\nu \in \mathbb{S}^{n-1}(\R^n)$ and $\psi \in 
  \mathscr{S}(\nu^\perp)$. Define
  \begin{eqnarray}\nn
     \Psi(\sigma\nu + \xi_{\nu^\perp}) = \psi(\xi_{\nu^\perp}) \qquad 
  \forall \sigma \in \R.
  \end{eqnarray}
  If $\Theta \not\parallel \nu$ and $\Theta \cdot \nu = \cos \alpha$, 
  $\alpha \in (0,\pi)$ then, 
\begin{equation}
  \finv{\Theta}{\Psi} (t \Theta + \xi_{\Theta^\perp}) = \frac{e^{i \ell 
  t \cot\alpha}} {\sin\alpha} \finv{\nu_\perp}{\psi} \big( 
  \frac{t}{\sin\alpha} \nu_\perp + \xi_{\perp\perp} \big),
\end{equation}
  where $\ell = \xi_{\Theta^\perp}\cdot\Theta_\perp$ and
  \(\xi_{\perp\perp}\) is the component of  \(\xi=t
  \Theta + \xi_{\Theta^\perp}\) perpendicular to the  \((\Theta,\nu)\) plane.
If $\Theta \parallel \nu$ then
\begin{eqnarray}
\label{eq:87}
  \finv{\Theta}{\Psi} (t\Theta+\xi_{\Theta^\perp}) = \sqrt{2\pi} 
  \delta_0(t) \psi(\xi_{\Theta^\perp}).
\end{eqnarray}
\end{proposition}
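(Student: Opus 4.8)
The plan is to prove the identity by a direct computation, reducing the one-dimensional Fourier transform $\finv{\Theta}{\Psi}$ to a lower-dimensional Fourier transform of $\psi$ through a change of variables that aligns the $\nu$-direction, in which $\Psi$ is constant, with the coordinate axes of the $(t,\xi_{\Theta^\perp})$ splitting. The case $\Theta\parallel\nu$ is immediate: then $\Psi$, as a function of $t$ alone (with $\xi_{\Theta^\perp}$ fixed), is the constant $\psi(\xi_{\Theta^\perp})$, and the inverse one-dimensional Fourier transform of a constant is $\sqrt{2\pi}\,\delta_0$, giving \eqref{eq:87}. The substance is the case $\Theta\not\parallel\nu$.

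For the generic case I would work in the two-dimensional $(\Theta,\nu)$ plane, since $\Psi$ depends on $\xi$ only through its components in that plane (the $\xi_{\perp\perp}$ component simply passes through both transforms as a spectator and appears unchanged as the argument of $\finv{\nu_\perp}{\psi}$). Write $\xi$ restricted to the plane in the $\Theta$-adapted coordinates $(\tau,\ell)$, meaning $\xi = \tau\Theta + \ell\,\Theta_\perp + \xi_{\perp\perp}$, and also in the $\nu$-adapted coordinates $(\sigma,m)$, meaning $\xi = \sigma\nu + m\,\nu_\perp + \xi_{\perp\perp}$. Since both $(\Theta,\Theta_\perp)$ and $(\nu,\nu_\perp)$ are positively oriented orthonormal frames of the plane and $\Theta\cdot\nu=\cos\alpha$, the change of basis is the rotation by $\alpha$, so $\sigma = \tau\cos\alpha + \ell\sin\alpha$ (up to a sign convention for $\Theta_\perp$ vs.\ $\nu_\perp$ that I will fix to match the stated formula). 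By hypothesis $\Psi = \psi$ as a function of $\xi_{\nu^\perp} = m\,\nu_\perp + \xi_{\perp\perp}$, i.e.\ $\Psi$ is independent of $\sigma$; in the $(\tau,\ell)$ coordinates this means $\Psi$ depends on $(\tau,\ell)$ only through the combination $m$, which is the $\nu_\perp$-component, and one computes $m = -\tau\sin\alpha + \ell\cos\alpha$ (again modulo the orientation sign).

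The computation then proceeds: by definition $\finv{\Theta}{\Psi}(t\Theta+\xi_{\Theta^\perp}) = \frac{1}{\sqrt{2\pi}}\int_{-\infty}^\infty e^{it\tau}\,\Psi\,d\tau$ with $\ell = \xi_{\Theta^\perp}\cdot\Theta_\perp$ and $\xi_{\perp\perp}$ held fixed. Substituting $\Psi = \psi(m\,\nu_\perp + \xi_{\perp\perp})$ with $m = -\tau\sin\alpha + \ell\cos\alpha$, I change the variable of integration from $\tau$ to $m$; the Jacobian is $\abs{d\tau/dm} = 1/\sin\alpha$ (this uses $\alpha\in(0,\pi)$ so $\sin\alpha>0$), and $\tau = (\ell\cos\alpha - m)/\sin\alpha$, hence the phase becomes $e^{it\tau} = e^{it\ell\cot\alpha}\,e^{-itm/\sin\alpha}$. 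The phase factor $e^{it\ell\cot\alpha}$ is independent of $m$ and pulls out of the integral, matching the prefactor $e^{i\ell t\cot\alpha}$ in the claim. What remains is $\frac{1}{\sin\alpha}\cdot\frac{1}{\sqrt{2\pi}}\int e^{-i(t/\sin\alpha)m}\psi(m\,\nu_\perp + \xi_{\perp\perp})\,dm$, which is exactly $\frac{1}{\sin\alpha}\finv{\nu_\perp}{\psi}\big(\frac{t}{\sin\alpha}\nu_\perp + \xi_{\perp\perp}\big)$, using the one-dimensional inverse Fourier transform along $\nu_\perp$ within the space $\nu^\perp$ (with $\xi_{\perp\perp}$ as the orthogonal spectator variable). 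This is the asserted formula.

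The main obstacle is purely bookkeeping rather than analytic: getting all the orientation conventions and signs to line up so that the exponent is precisely $+i\ell t\cot\alpha$ and the argument is precisely $+\frac{t}{\sin\alpha}\nu_\perp$, rather than some sign-flipped variants. Concretely, one must pin down the sign of $\Theta_\perp$ relative to $\nu_\perp$ implied by "the pair $(\nu,\nu_\perp)$ is positively oriented" together with the analogous statement for $(\Theta,\Theta_\perp)$, and track how that sign enters the linear relations $\sigma = \tau\cos\alpha \pm \ell\sin\alpha$ and $m = \mp\tau\sin\alpha + \ell\cos\alpha$. I would do this once carefully with an explicit $2\times 2$ rotation matrix and then verify the final formula is consistent (for instance, by checking the limiting behavior as $\alpha\to\pi/2$ against the perpendicular case, and by a sanity check on a Gaussian $\psi$). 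Everything else—Fubini to separate out $\xi_{\perp\perp}$, the legitimacy of the change of variables, and the identification of the resulting integral as $\finv{\nu_\perp}{\psi}$—is routine for $\psi\in\mathscr{S}(\nu^\perp)$.
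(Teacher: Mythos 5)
Your argument is correct and is essentially the paper's own proof: both compute $\finv{\Theta}{\Psi}$ directly from the definition, express the $\nu_\perp$-component of $\tau\Theta+\xi_{\Theta^\perp}$ linearly in $\tau$ and $\ell$, change variables with Jacobian $1/\sin\alpha$, pull out the phase $e^{i\ell t\cot\alpha}$, identify the remaining integral as $\finv{\nu_\perp}{\psi}\big(\tfrac{t}{\sin\alpha}\nu_\perp+\xi_{\perp\perp}\big)$, and dispose of the parallel case via the Fourier transform of a constant. The sign bookkeeping you defer is settled in the paper by the explicit frame relations $\Theta=\cos\alpha\,\nu+\sin\alpha\,\nu_\perp$ and $\Theta_\perp=\sin\alpha\,\nu-\cos\alpha\,\nu_\perp$, which make the $\nu_\perp$-component equal to $\tau\sin\alpha-\ell\cos\alpha$ and yield exactly the stated signs.
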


\begin{proof}
  According to Definition \ref{fourierTransformDef}
\[
  \finv{\Theta}{\Psi} (t\Theta+\xi_{\Theta^\perp}) = 
  \frac{1}{\sqrt{2\pi}} \int_{-\infty}^\infty e^{it\tau} 
  \Psi(\tau\Theta+\xi_{\Theta^\perp}) d\tau.
\]
It is easy to check that 
\begin{eqnarray*}
  \Theta &=& \cos\alpha \,\nu + \sin\alpha
    \,\nu_\perp, \label{nuTdef}
\\ 
  \nu &=& \cos\alpha \,\Theta + \sin\alpha \,\Theta_\perp, 
  \label{thetaTdef}
\\
  \Theta_\perp &=& \sin \alpha \,\nu - \cos \alpha
                   \,\nu_\perp,
\\\na{and therefore that}
\tau\Theta + \xi_{\Theta^\perp} &=& 
  (\tau\cos\alpha + \ell\sin\alpha)\nu + (\tau\sin\alpha - 
  \ell\cos\alpha)\nu_\perp + \xi_{\perp\perp}.
\end{eqnarray*}
  Because $\Psi$ is 
  constant in the direction $\nu$ and  equal to
  $\psi$ on $\nu^\perp$,
\begin{align*}
  \finv{\Theta}{\Psi} (t\Theta+\xi_{\Theta^\perp}) &= 
  \frac{1}{\sqrt{2\pi}} \int_{-\infty}^\infty e^{it\tau} \psi\big( 
  (\tau\sin\alpha - \ell\cos\alpha)\nu_\perp + \xi_{\perp\perp} \big) 
  d\tau \\ & = \frac{1}{\sqrt{2\pi}} \int_{-\infty}^\infty 
  e^{it\left(\frac{\tau'}{\sin\alpha} + \ell\cot\alpha\right)} 
  \psi(\tau'\nu_\perp + \xi_{\perp\perp}) \frac{d\tau'}{\sin\alpha} \\ 
  & = \frac{e^{i\ell t \cot\alpha}} {\sin\alpha} 
  \finv{\nu_\perp}{\psi} \big( \frac{t}{\sin\alpha}\nu_\perp + 
  \xi_{\perp\perp} \big).
\end{align*}

  If $\Theta\parallel\nu$, then  \(\Psi(t\Theta+\tp{\xi})\)
  is independent of  \(t\), so \eqref{eq:87} follows from
  the fact that the one dimensional Fourier transform of
  the constant function is the Dirac delta.
\end{proof}

\begin{corollary}\label{constantInDirectionNorm}
 With the notation of  Proposition \ref{fourierTwoDirections} and 
  $\Theta\not\parallel\nu$ we have
\begin{eqnarray}\label{eq:88}
  \norm{\finv{\Theta}{\Psi}}_{\Theta(1,\infty)} =
  \norm{\finv{\nu_\perp}{\psi}}_{\nu_\perp(1,\infty)},
\end{eqnarray}
  and therefore
\begin{equation}
  \norm{ \f{\Theta^\perp}{M_\Psi f} }_{\Theta(1,p)} \le 
  \norm{\finv{\nu_\perp}{\psi}}_{\nu_\perp(1,\infty)} \norm{ 
  \f{\Theta^\perp}{f} }_{\Theta(1,p)}.
\end{equation}
  If $\Theta\parallel\nu$, then
\begin{equation}
  \norm{ \f{\Theta^\perp}{M_\Psi f} }_{\Theta(1,p)} \le 
  \sup_{\Theta^\perp} \abs{\psi} \norm{ \f{\Theta^\perp}{f} 
  }_{\Theta(1,p)}.
\end{equation}
\end{corollary}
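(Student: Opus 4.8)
The plan is to derive Corollary~\ref{constantInDirectionNorm} as a more or less immediate consequence of Proposition~\ref{fourierTwoDirections} together with Lemma~\ref{multiplierNorm}. The only substantive work is to verify the identity \eqref{eq:88} of mixed norms; once that is in hand, the two displayed multiplier bounds follow by composing with Lemma~\ref{multiplierNorm}.

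First I would treat the case $\Theta\not\parallel\nu$. Starting from the formula in Proposition~\ref{fourierTwoDirections},
\[
  \finv{\Theta}{\Psi}(t\Theta+\xi_{\Theta^\perp}) = \frac{e^{i\ell t\cot\alpha}}{\sin\alpha}\,\finv{\nu_\perp}{\psi}\big(\tfrac{t}{\sin\alpha}\nu_\perp+\xi_{\perp\perp}\big),
\]
I would compute the $\Theta(1,\infty)$ norm directly from Definition~\ref{mixedNormSpaceDef}. Taking absolute values kills the unimodular factor $e^{i\ell t\cot\alpha}$, so the inner $L^1(dt)$ integral is $\frac{1}{\sin\alpha}\int_{-\infty}^\infty|\finv{\nu_\perp}{\psi}(\tfrac{t}{\sin\alpha}\nu_\perp+\xi_{\perp\perp})|\,dt$, and the substitution $s=t/\sin\alpha$ (legitimate since $\alpha\in(0,\pi)$, so $\sin\alpha>0$) absorbs the $1/\sin\alpha$ exactly, leaving $\int|\finv{\nu_\perp}{\psi}(s\nu_\perp+\xi_{\perp\perp})|\,ds$. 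The remaining point is that taking $\sup$ over $\xi_{\Theta^\perp}\in\Theta^\perp$ is the same as taking $\sup$ over $\xi_{\perp\perp}$ in the orthogonal complement of $\nu_\perp$: the map $\xi_{\Theta^\perp}\mapsto(\ell,\xi_{\perp\perp})$ with $\ell=\xi_{\Theta^\perp}\cdot\Theta_\perp$ is a linear isomorphism of $\Theta^\perp$ onto $\R\times\big((\Theta,\nu)\text{-plane}\big)^\perp$, and since the integrand does not depend on $\ell$, the supremum over $\Theta^\perp$ equals the supremum over $\xi_{\perp\perp}$, which is precisely $\norm{\finv{\nu_\perp}{\psi}}_{\nu_\perp(1,\infty)}$. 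This gives \eqref{eq:88}. Then combining \eqref{eq:88} with Lemma~\ref{multiplierNorm} (with the constant $\frac{1}{\sqrt{2\pi}}$ there — I should double-check whether the corollary's constant absorbs it or whether it is silently dropped; if the statement omits it I would point out it is harmless or incorporate it, matching the paper's convention) yields the first multiplier inequality.

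Next, for $\Theta\parallel\nu$, Proposition~\ref{fourierTwoDirections} gives $\finv{\Theta}{\Psi}(t\Theta+\xi_{\Theta^\perp})=\sqrt{2\pi}\,\delta_0(t)\psi(\xi_{\Theta^\perp})$, so $\finv{\Theta}{\Psi}$ is not a function and Lemma~\ref{multiplierNorm} does not apply literally. Here I would instead argue directly: by definition of $M_\Psi$ and the one-dimensional inverse Fourier transform in $t$, $\f{\Theta^\perp}{M_\Psi f}=\finv{\Theta}{\{\Psi\f{}{f}\}}$, and since $\Psi$ is independent of $\tau$ along $\Theta=\nu$, the multiplier acts as pointwise multiplication by $\psi(\xi_{\Theta^\perp})$ on $\f{\Theta^\perp}{f}$; that is, $\f{\Theta^\perp}{M_\Psi f}(t\Theta+\xi_{\Theta^\perp})=\psi(\xi_{\Theta^\perp})\f{\Theta^\perp}{f}(t\Theta+\xi_{\Theta^\perp})$. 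Taking the $L^1(dt)$ norm pulls out $|\psi(\xi_{\Theta^\perp})|$, and then the $L^p(d\xi_{\Theta^\perp})$ norm gives the bound with the constant $\sup_{\Theta^\perp}|\psi|$, as claimed. (Alternatively one can convolve with $\sqrt{2\pi}\,\delta_0$ and note Young's inequality degenerates to the identity, which is really the same computation.)

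The main obstacle — such as it is — is purely bookkeeping: keeping the change of variables $s=t/\sin\alpha$ consistent with the Jacobian so that the $1/\sin\alpha$ prefactor cancels exactly (rather than leaving a spurious power of $\sin\alpha$), and being careful that the supremum in the $\Theta(1,\infty)$ norm is over the full $(n-1)$-dimensional space $\Theta^\perp$ while the integrand only sees the $(n-2)$-dimensional slice $\xi_{\perp\perp}$ — the extra $\ell$-direction is free, so no information is lost and the two suprema agree. There is no analytic difficulty; the content is the geometric fact, already recorded in Proposition~\ref{fourierTwoDirections}, that a multiplier constant in direction $\nu$ looks, after the one-dimensional transform along $\Theta$, like a rescaled-and-modulated transform along $\nu_\perp$, and modulation and isometric rescaling are invisible to the $\Theta(1,\infty)$ norm.
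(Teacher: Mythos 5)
Your proof is correct and follows essentially the same route as the paper: a change of variables in the $t$ integral plus the observation that the $\ell$-direction is free to identify the two $\Theta(1,\infty)$-type suprema, then Lemma \ref{multiplierNorm} for the non-parallel multiplier bound, and the direct pointwise action $\f{\Theta^\perp}{M_\Psi f}=\psi(\xi_{\Theta^\perp})\f{\Theta^\perp}{f}$ in the parallel case. Your side remark about the factor $\tfrac{1}{\sqrt{2\pi}}$ is right and harmless, since dropping a constant less than one only weakens the stated bound.
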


\begin{remark}
 The  \(\nu_\perp(1,\infty)\) norm which appears in
(\ref{eq:88}) is analogous to the
\(\Theta_\perp(1,\infty)\) norm , but is defined on functions of
one fewer variable. Recall that  \(\psi\) is defined on
the  \(\nu^{\perp}\) hyperplane, and  \(\nu_{\perp}\) is
a unit vector in that hyperplane perpendicular to
\(\Theta\).
Thus  \(\finv{\nu_\perp}{\psi}\) is a function of
\(\tau\nu_{\perp}+ \xi_{\perp\perp}\), and the
\(\nu_\perp(1,\infty)\) norm means
the supremum over  \(\xi_{\perp\perp}\) of the
\(L^{1}(d\tau)\) norm.     
\end{remark}

\begin{proof}
  We have $0<\alpha<\pi$ and so $\sin\alpha>0$. Hence
\[
  \int_{-\infty}^\infty 
  \abs{\finv{\Theta}{\Psi}(t\Theta+\xi_{\Theta^\perp})} dt = 
  \int_{-\infty}^\infty \abs{\finv{\nu_\perp}{\psi}(t'\nu_\perp + 
  \xi_{\perp\perp})} dt'
\]
  by a change of variables. Then we can take the supremum over 
  $\xi_{\Theta^\perp} \in \Theta^\perp$, which will give the same result 
  as the supremum of $\xi_{\perp\perp}$ over $\nu^\perp \cap 
  \nu_\perp^\perp$. The multiplier estimate follows from Lemma 
  \ref{multiplierNorm}. For the second case note that $M_\Psi f = 
  \finv{\Theta^\perp}{\{\psi(\xi_{\Theta^\perp}) 
  \f{\Theta^\perp}{f}\}}$. The claim follows directly.
\end{proof}

\section{Estimates  for 2\textsuperscript{nd} order operators}\label{sec:order2}

  We treat a second order constant coefficient partial
  differential operator \(P(\D)\),
  with no double characteristics, i.e. no
  simultaneous real root of \(p(\xi) = 0\)
  and \(\nabla p(\xi)=0\). The main result of this
  section is:
  \begin{theorem}\label{th:main}
    Let  \(P(\D)\) be a single real second order constant
    coefficient partial differential operator on \(\R^{n}\)
    with no real double
    characteristics. Then there exists a constant
    \(C(P,n)\) such that, for every open bounded
    \(D_{s}\subset\R^{n}\), and every  \(f\in L^2(D_{s})\),
    there is a  \(u\in L^2_{loc}(\R^{n})\) satisfying
 \begin{eqnarray}
  \label{eq:1}
  P(\D)u=f
\end{eqnarray}
such that, for any bounded domain \(D_{r}\subset\R^{n}\)
\begin{eqnarray}
  \label{eq:2}
  ||u||_{L^2(D_{r})}\le C \sqrt{d_{r}d_{s}}||f||_{L^2(D_{s})}
\end{eqnarray}
where  \(d_{j}\) is the diameter of  \(D_{j}\),
the supremum over all lines of the length of the
intersection of the line with  \(D_{j}\); i.e.   
\begin{eqnarray}
  \nn
  d_{j}= \sup_{\mathrm{lines}\ l}\mu_{1}(l\cap D_{j}).
\end{eqnarray}
  \end{theorem}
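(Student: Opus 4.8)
The plan is to reduce the general real second-order operator $P(\D)$ with no real double characteristics to the Helmholtz model case treated in Section 2, via a normal-form argument, and then to assemble the local $L^2$ estimate using the partition-of-unity scheme sketched in the introduction together with Lemma \ref{mixedToUniformNorms}. First I would put $P(\D)$ in normal form: after an affine change of variables (a linear change in $\xi$ plus a translation in $\xi$ to kill the first-order terms, which only shifts the symbol and multiplies $u$ by an exponential, preserving all the relevant norms and the diameters $d_r, d_s$), the principal part becomes $\sum_{j=1}^{n} \epsilon_j \xi_j^2$ with $\epsilon_j \in \{+1,-1,0\}$, and the symbol becomes $p(\xi) = \sum \epsilon_j \xi_j^2 + c$ for a constant $c$. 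The no-real-double-characteristics hypothesis says $p$ and $\nabla p$ do not vanish simultaneously on $\R^n$; for such a normalized quadratic this forces, on the characteristic variety $\{p=0\}$, that at least one $\epsilon_j$ is nonzero and that the zero set is a genuine (non-degenerate in the relevant directions) quadric. The cases to organize are: definite principal part with $c$ of the "wrong" sign (elliptic, characteristic variety empty — here a crude bound suffices), definite principal part with $c$ making $\{p=0\}$ a sphere (the genuine Helmholtz case), and indefinite principal part (hyperbolic-type, where $\{p=0\}$ is a hyperboloid or cone but the cone case is excluded by the hypothesis unless it is shifted).

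Next, for each such normalized operator I would construct the directions $\Theta_1,\dots,\Theta_m$ and the Fourier multipliers $\phi_j$ realizing the decomposition \eqref{eq:85}. The key geometric point is exactly the one illustrated in Figure \ref{partitionUnityProcedureFig}: for a fixed direction $\Theta$, the "bad set" $\mathscr B_{\Theta,0}$ where the symbol restricted to the line $\xi_{\Theta^\perp} + \tau\Theta$ has a double root — equivalently where $\d_\tau p$ and $p$ both vanish — is a lower-dimensional cylinder in $\xi$-space constant in the $\Theta$ direction, and I must choose finitely many directions $\Theta_j$ whose bad cylinders have empty common intersection. For a quadric this is a finite linear-algebra fact: the bad set for $\Theta$ is contained in a hyperplane (or the empty set), and $n+1$ directions in general position have hyperplanes with empty intersection. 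I then take $\phi_j$ to be a smooth function, constant in the $\Theta_j$ direction, equal to $1$ near $\mathscr B_{\Theta_j,0}$ and $0$ outside a slightly larger neighborhood; telescoping as in \eqref{eq:85} produces $f = \sum f_j$ with $f_j = M_{\Psi_j} f$ where $\Psi_j$ is a product of such cutoffs (each constant in some $\Theta_k$ direction). Corollary \ref{constantInDirectionNorm} — invoked once for each factor, handling the parallel and perpendicular sub-cases — gives $\norm{\f{\Theta_j^\perp}{f_j}}_{\Theta_j(1,2)} \le C \norm{\f{\Theta_j^\perp}{f}}_{\Theta_j(1,2)}$, which is \eqref{eq:67}, while the support condition on $\phi_j$ gives a uniform lower bound $\inf_{\supp \f{\Theta_j^\perp}{f_j}} |\d_\tau p| \ge \varepsilon$ away from the double characteristics, which is the analogue of \eqref{eq:65}.

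With the decomposition in hand, for each $j$ I solve $P(\D) u_j = f_j$ by the same device as in Section 2: Fourier-transform in $\Theta_j^\perp$, factor the resulting second-order ODE in $t$ as a product of two first-order operators $(\d_t + i r)(\d_t - i r) w = g$ with $r = r(\xi_{\Theta_j^\perp})$ one of the two roots (chosen with the sign convention inherited from the $+i0$ limit so the propagators are bounded), write $w_j = w_{1,j} + w_{2,j}$ via the explicit integral formulas \eqref{eq:60}–\eqref{eq:69}, and conclude the pointwise bound $|\f{\Theta_j^\perp}{u_j}(t\Theta_j + \xi_{\Theta_j^\perp})| \le \norm{\f{\Theta_j^\perp}{f_j}(s\Theta_j+\xi_{\Theta_j^\perp})}_{L^1(ds)} / |\text{root gap}|$; since the root gap is $2r$ up to a bounded factor and $|r| \ge \varepsilon$ on the relevant support, this yields $\norm{\f{\Theta_j^\perp}{u_j}}_{\Theta_j(\infty,2)} \le C \varepsilon^{-1} \norm{\f{\Theta_j^\perp}{f_j}}_{\Theta_j(1,2)}$, the analogue of \eqref{eq:66}. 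Applying Lemma \ref{mixedToUniformNorms} to each $u_j$ on $D_r$ and summing the finitely many terms gives $\norm{u}_{L^2(D_r)} \le C\sqrt{d_r d_s}\,\norm{f}_{L^2(D_s)}$ with $u = \sum u_j$; undoing the affine normalization (which rescales $d_r, d_s$ by fixed factors depending only on $P$ and $n$, and multiplies $u$ by a modulus-one exponential) gives \eqref{eq:2}.

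The main obstacle I anticipate is not the ODE step or the conversion lemma — those are the soft parts — but the careful bookkeeping of the "root gap" near the locus where the two roots $r_{\pm}(\xi_{\Theta_j^\perp})$ of $p(\xi_{\Theta_j^\perp} + \tau\Theta_j)=0$ in $\tau$ collide or escape to infinity, i.e. understanding precisely which directions $\Theta$ have which bad sets and proving the finite-intersection property uniformly across the cases (definite/indefinite principal symbol, and the degenerate situation where the principal part has a nontrivial kernel so that for some $\Theta$ the $\tau$-coefficient of $\tau^2$ degenerates and the "ODE" is first order or zeroth order). One must check that in those degenerate directions either the equation is still solvable with the same mixed-norm bound, or that such $\Theta$ can simply be avoided in the choice of $\{\Theta_j\}$. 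Handling this case analysis cleanly — and verifying that the constant $C$ depends only on $P$ and $n$ and not on the (unbounded) geometry — is where the real work lies; everything downstream is an application of Lemma \ref{multiplierNorm}, Corollary \ref{constantInDirectionNorm}, and Lemma \ref{mixedToUniformNorms}.
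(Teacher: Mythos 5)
Your overall strategy (partial Fourier transform in a hyperplane, factoring the resulting ODE, mixed norms, a Fourier-side partition of unity, and Lemma \ref{mixedToUniformNorms}) is the paper's strategy, but two of your reductions fail, and they are exactly where the paper has to work. First, you cannot ``kill the first-order terms'' of a \emph{real} second-order operator by a norm-preserving transformation: those terms contribute \emph{imaginary} linear terms to the symbol, so the Fourier-side translation that removes them is a complex shift, i.e.\ conjugation by a real exponential \(e^{\beta\cdot x}\). That factor is not unimodular; it converts the desired bound into one whose constant depends on the location (not just the diameters) of \(D_r\) and \(D_s\), destroying precisely the translation invariance the theorem asserts. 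The paper therefore keeps the first-order terms: after completing squares the symbol is \(\sum\epsilon_j(i\eta_j-\beta_j)^2+b\), the cutoffs involve both \(\Re Q_k\) and \(\Im Q_k\), and the no-double-characteristic hypothesis enters as ``\(b\neq0\) or some \(\beta_k\neq0\)'' (and the degenerate directions \(\epsilon_j=0\) are handled separately in Propositions \ref{prop4.3} and \ref{prop4.4}, not transformed away). Your case analysis ``sphere / hyperboloid / empty'' misses all operators with genuine drift terms, e.g.\ the Faddeev operator.

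Second, your key geometric claim is false: the bad set \(\mathscr{B}_{\Theta,0}\) of a quadratic symbol is the zero set of a quadratic polynomial in \(\xi_{\Theta^\perp}\) (for Helmholtz, the cylinder \(|\xi_{\Theta^\perp}|=k\)), not a subset of a hyperplane, so ``\(n+1\) hyperplanes in general position have empty intersection'' does not apply; indeed for the coordinate directions the neighborhoods of these cylinders \emph{do} intersect (Figure \ref{partitionUnityProcedureFig} shows eight leftover points in \(\R^3\)). The paper does not arrange an empty intersection at all: Lemma \ref{th:zqeps} shows the intersection of the bad neighborhoods is a bounded set of diameter \(O(\sqrt{\varepsilon})\) on which \(|P(\eta)|>\varepsilon\) (this is where the no-double-characteristic hypothesis is used), and the leftover piece \(f_{n+1}\) is handled by dividing by the symbol (Proposition \ref{th:unplus1}). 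This entire mechanism is absent from your proposal. Relatedly, you cannot get \eqref{eq:67} from Corollary \ref{constantInDirectionNorm}: that corollary requires the profile \(\psi\) to be Schwartz on \(\nu^\perp\), whereas cutoffs of the form \(\phi_\varepsilon(\Re Q_k)\), \(\phi_\varepsilon(\Im Q_k)\) are neither Schwartz nor (in the indefinite case) compactly supported modulo constants. The paper instead proves the multiplier bounds by hand, via the measure estimate of Lemma \ref{th:simplequad} and the integration-by-parts bounds of Lemma \ref{th:BasicEst}, Corollary \ref{th:BasicEstCor} and Lemma \ref{th:qEst}, which give a uniform \(\Theta_j(1,\infty)\) bound for \(\finv{\Theta_j}{\Psi_k}\) when \(j\neq k\). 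Without substitutes for these two ingredients (the treatment of first-order terms and of the leftover region, plus honest multiplier estimates), the proposal does not yield the theorem.
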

We begin the proof by writing the second order operator in a simple
normal form.
\begin{lemma}\label{th:1}
  After an orthogonal change of coordinates and a
  rescaling:
  \begin{eqnarray}
    \label{eq:4}
    P(\D) = \sum\epsilon_{j}\left(\Dj\right)^{2}
    + 2\sum\alpha_{j}\Dj + B
  \end{eqnarray}
where  each \(\epsilon_{j}\) equals one of \(0,1,-1\);
\(\alpha_{j}\in\R\), and  \(B\in\R\).    
\end{lemma}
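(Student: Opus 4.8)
The statement is a piece of linear algebra applied to the coefficient matrix of the principal part of \(P(\D)\), plus bookkeeping for the lower order terms, so the plan is: symmetrize and diagonalize that matrix by an orthogonal transformation, and then normalize its eigenvalues by an anisotropic rescaling of the coordinates, tracking the first and zeroth order terms along the way.

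First I would write \(P(\D)=\sum_{j,k}a_{jk}\,\d_{x_j}\d_{x_k}+\sum_j b_j\,\d_{x_j}+c\); since \(\d_{x_j}\d_{x_k}=\d_{x_k}\d_{x_j}\) we may take \(A=(a_{jk})\) symmetric, and \(A\), \(b=(b_j)\), \(c\) are real because \(P\) is a real operator, with \(A\neq 0\) as \(P\) has order two. Under a linear change of variables \(x=Sy\) the chain rule gives \(\nabla_x=(S^{-1})^{T}\nabla_y\), so the principal part becomes \(\nabla_y^{T}\bigl(S^{-1}A(S^{-1})^{T}\bigr)\nabla_y\), i.e. \(A\) is transformed by congruence, the first order part becomes a first order operator with the real coefficient vector \(S^{-1}b\), and \(c\) is untouched.

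By the spectral theorem, pick an orthogonal \(O\) with \(O^{T}AO=\operatorname{diag}(\lambda_1,\dots,\lambda_n)\), \(\lambda_j\in\R\); for orthogonal \(O\) the congruence \(O^{-1}A(O^{-1})^{T}\) is the conjugation \(O^{T}AO\), so this is a genuine orthogonal change of coordinates, after which \(P=\sum_j\lambda_j\,\d_{y_j}^{2}+\sum_j\beta_j\,\d_{y_j}+c\) with \(\beta=O^{-1}b\in\R^n\). Then apply the anisotropic rescaling \(y_j=|\lambda_j|^{1/2}z_j\) for each \(j\) with \(\lambda_j\neq 0\) and \(y_j=z_j\) otherwise: this sends \(\d_{y_j}^{2}\) to \(|\lambda_j|^{-1}\d_{z_j}^{2}\), so the coefficient of \(\d_{z_j}^{2}\) becomes \(\epsilon_j:=\sgn\lambda_j\in\{1,-1\}\) when \(\lambda_j\neq 0\) and is \(0=\epsilon_j\) when \(\lambda_j=0\), while the first order coefficients are multiplied by \(|\lambda_j|^{-1/2}\) (or by \(1\)) and remain real. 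Writing \(2\alpha_j\) for the resulting coefficient of \(\d_{z_j}\) and \(B:=c\) gives exactly \eqref{eq:4}.

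I do not expect a real obstacle here; the only points worth a word are that ``a rescaling'' must be read as an anisotropic, coordinate-wise one (an isotropic dilation normalizes at most the overall scale), that the allowed operations --- one orthogonal change followed by one diagonal rescaling --- are precisely what the argument uses, and that coordinates with \(\lambda_j=0\) may be left with any nonzero scaling, which is why the statement permits some of the \(\epsilon_j\) to vanish while (since \(A\neq0\)) at least one equals \(\pm1\). The factor \(2\) in front of \(\sum\alpha_j\,\d_{x_j}\) in \eqref{eq:4} is purely cosmetic.
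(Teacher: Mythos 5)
Your proof is correct and follows essentially the same route as the paper: diagonalize the real symmetric coefficient matrix of the principal part by an orthogonal change of coordinates (spectral theorem), then rescale each coordinate anisotropically to normalize the eigenvalues to \(0,\pm1\). You are in fact slightly more careful than the paper's sketch, which writes the rescaling as \(x_j=\sqrt{\lambda_j}\,y_j\) without spelling out the absolute value for negative eigenvalues, the treatment of zero eigenvalues, or the effect on the lower order terms.
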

\begin{proof}
  This is a statement about the principal (second order) part $P_2$ of
  the operator. \(P_{2}(\xi)\)
  is a real quadratic form with eigenvalues
  \(-\lambda_{i}\)
  and eigenvectors \(e_{j}\). If we introduce coordinates
\begin{equation}\nn
    x = \sum x_{j}e_{j}
\end{equation}
  then
\[
  P_{2}(\D) = \sum\lambda_{j}\left(\Dj\right)^{2}
\]
After the rescaling
\[
  x_{j}=\sqrt{\lambda_{j}}y_{j}\qquad
  \frac{\partial}{\partial y_{j}}=\sqrt{\lambda_{j}}\Dj
\]
the second order part takes the desired form in (\ref{eq:4}).
\end{proof}

Next, we dismiss the simple cases.
\begin{proposition}\label{prop4.3}
  If some \(\epsilon_{j}=0\) and the corresponding \(\alpha_{j}\ne0\), 
  then Theorem \ref{th:main} is true.
\end{proposition}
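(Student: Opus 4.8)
The plan is to reduce \eqref{eq:1} to a first-order ordinary differential equation along the distinguished coordinate direction, solve it explicitly, and convert the resulting mixed-norm bound into \eqref{eq:2} via Lemma~\ref{mixedToUniformNorms} --- in exact analogy with the model computation for the Helmholtz equation \eqref{eq:13}, but \emph{without} any partition of unity, since the difficulty there (the vanishing of the denominator in \eqref{eq:61}, which forces the decomposition \eqref{eq:85}) does not arise here: the coefficient of $\partial_t$ in the reduced equation will be a fixed nonzero real number.

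First I would invoke Lemma~\ref{th:1} to put $P(\D)$ in the normal form \eqref{eq:4}, and relabel coordinates so that the distinguished index is $n$, i.e.\ $\epsilon_n=0$ and $\alpha_n\neq0$. Taking $\Theta=e_n$, so that $t=x_n$ and $\Theta^\perp=\{x_n=0\}$, and applying the partial Fourier transform $\f{\Theta^\perp}{}$ to $P(\D)u=f$, the second-order term in $x_n$ disappears, so the equation for $w=\f{\Theta^\perp}{u}$ and $g=\f{\Theta^\perp}{f}$ becomes, for a.e.\ $\xi_{\Theta^\perp}$, the first-order ODE
\begin{equation*}
  2\alpha_n\,\partial_t w(t,\xi_{\Theta^\perp}) + c(\xi_{\Theta^\perp})\,w(t,\xi_{\Theta^\perp}) = g(t,\xi_{\Theta^\perp}), \qquad c(\xi_{\Theta^\perp}) = -\sum_{j<n}\epsilon_j\xi_j^2 + B + 2i\sum_{j<n}\alpha_j\xi_j,
\end{equation*}
where $c(\xi_{\Theta^\perp})$ is, for each fixed $\xi_{\Theta^\perp}$, merely a complex constant --- crucially, no lower bound on it is required. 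Since $f\in L^2(D_s)$ is compactly supported, the estimate \eqref{eq:71} gives $g\in\Theta(1,2)$, hence $g(\cdot,\xi_{\Theta^\perp})\in L^1(dt)$ with compact support for a.e.\ $\xi_{\Theta^\perp}$, and I would write down the explicit solution
\begin{equation*}
  w(t,\xi_{\Theta^\perp}) = \frac{\sigma}{2\alpha_n}\int_{I_\sigma(t)} e^{-\frac{c(\xi_{\Theta^\perp})}{2\alpha_n}(t-s)}\,g(s,\xi_{\Theta^\perp})\,ds,
\end{equation*}
where $(\sigma,I_\sigma(t)) = (1,(-\infty,t))$ if $\operatorname{Re} c(\xi_{\Theta^\perp})/\alpha_n \ge 0$ and $(\sigma,I_\sigma(t)) = (-1,(t,\infty))$ otherwise. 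In either branch the exponential has modulus at most one on the range of integration, so that $\norm{w(\cdot,\xi_{\Theta^\perp})}_{L^\infty(dt)} \le \frac{1}{2|\alpha_n|}\norm{g(\cdot,\xi_{\Theta^\perp})}_{L^1(dt)}$.

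Taking the $L^2(d\xi_{\Theta^\perp})$-norm of this yields $\norm{\f{\Theta^\perp}{u}}_{\Theta(\infty,2)} \le \frac{1}{2|\alpha_n|}\norm{\f{\Theta^\perp}{f}}_{\Theta(1,2)}$, and then Lemma~\ref{mixedToUniformNorms} both places $u=\finv{\Theta^\perp}{w}$ in $L^2_{loc}(\R^n)$ and gives
\begin{equation*}
  \norm{u}_{L^2(D_r)} \le \sqrt{d_r}\,\norm{\f{\Theta^\perp}{u}}_{\Theta(\infty,2)} \le \frac{\sqrt{d_r d_s}}{2|\alpha_n|}\,\norm{f}_{L^2(D_s)},
\end{equation*}
which is \eqref{eq:2} with $C = 1/(2|\alpha_n|)$, a constant depending only on $P$ and $n$ through the normalization of Lemma~\ref{th:1}. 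I expect no genuine obstacle here; the only points needing a line of care are soft ones --- that the branch selection is measurable in $\xi_{\Theta^\perp}$ (it depends only on the sign of $\operatorname{Re} c$), that $w\in\Theta(\infty,2)$ legitimizes applying $\finv{\Theta^\perp}{}$ in the $\xi_{\Theta^\perp}$-slices, and that the intertwining $P(\D)\finv{\Theta^\perp}{w} = \finv{\Theta^\perp}{(2\alpha_n\partial_t w + c w)}$ persists in $L^2_{loc}$ --- all handled exactly as in the Helmholtz case. The substance of the proposition is simply the elementary fact that $a\,\partial_t w + c\,w = g$, with $a$ a fixed nonzero real number, is always solvable with $\norm{w}_{L^\infty(dt)} \le |a|^{-1}\norm{g}_{L^1(dt)}$, so the lower-bound-on-the-denominator difficulty of the Helmholtz argument never appears.
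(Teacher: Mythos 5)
Your proof is correct and follows essentially the same route as the paper: partial Fourier transform in the hyperplane orthogonal to the distinguished coordinate, explicit solution of the resulting first-order ODE with the integration branch chosen by the sign of $\Re c/\alpha$ so the exponential is bounded by one, the $L^\infty(dt)$--$L^1(dt)$ bound, and then Lemma~\ref{mixedToUniformNorms} to convert the mixed-norm estimate into \eqref{eq:2}. The only differences from the paper's argument are cosmetic (choice of distinguished index, a factor-of-two normalization, and your explicit remarks on measurability of the branch choice).
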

\begin{proof}
  Without loss of generality, we may assume that
  \(j=1\). We do a partial Fourier transform in the
  \(x_{1}^{\perp}\) plane, i.e. with
  \(\tilde{x}=(x_{2},\ldots,x_{n})\) and
  \(\xi=(\xi_{2},\ldots,\xi_{n})\). We let \(\Theta_{1}\)
  denote the unit vector in the  \(x_{1}\) direction, and
  let  \(w=\ftp{u}\) and  \(g=\frac{1}{2}\ftp{f}\), i.e.
  \begin{eqnarray}
    \nn
    w(x_{1},\xi) = \frac{1}{(2\pi)^{(n-1)/2}}
    \int_{\R^{n-1}}e^{-i\xi\cdot\tilde{x}}u(x_{1},\tilde{x})dx_{2}\ldots dx_{n}
  \end{eqnarray}
Then  \(w\) satisfies
\begin{eqnarray}
  \label{eq:8}
  \alpha_{1}\frac{\partial w}{\partial x_{1}} + Q(\xi)w = g(x_{1},\xi)
\end{eqnarray}
where
\(Q(\xi) =
-\frac{1}{2}\sum_{j=2}^{n}\epsilon_{j}\xi_{j}^{2} + i\sum_{j=2}^n \alpha_j\xi_j +\frac{1}{2}B\). We
may write an explicit formula for \(w\):
\begin{eqnarray}
  \label{eq:9}
  w(x_1,\xi) = \frac{1}{\alpha_{1}}
  \begin{cases}
    \int_{-\infty}^{x_{1}}e^{Q(\xi)\frac{x_1-s}{\alpha_{1}}}g(s,\xi)ds
    &\mathrm{if}\ \frac{\Re{Q(\xi)}}{\alpha_{1}}>0
\\
-\int_{x_{1}}^{\infty}e^{Q(\xi)\frac{x_1-s}{\alpha_{1}}}g(s,\xi)ds
    &\mathrm{if}\ \frac{\Re{Q(\xi)}}{\alpha_{1}}<0
  \end{cases}
\end{eqnarray}
Our formula insures that, on the domain of integration,
\begin{equation}\label{eq:10}
  |e^{Q(\xi)\frac{x-s}{\alpha_{1}}}|<1
\end{equation}
  and therefore, for each fixed  \(\xi\), that
\begin{equation}\nn
  ||w(\cdot,\xi)||_{L^\infty}\le
  \frac{1}{\alpha_{1}}||g(\cdot,\xi)||_{L^1}
\end{equation}
  so that, squaring and integrating with respect
  to  \(\xi\) gives
\begin{eqnarray}\label{eq:31}
 ||w||_{L^2(d\xi,L^\infty(dx_{1}))}\le
\frac{1}{\alpha_{1}}||g||_{L^2(d\xi,L^1(dx_{1}))}
\\\na{or, using the notation of mixed norms}\nn
||\ftpj1{u}||_{\Theta_{1}(\infty,1)}\le
\frac{1}{2\alpha_{1}}||\ftpj1{f}||_{\Theta_{1}(1,2)}
\end{eqnarray}
with  \(\Theta_{1}\) equal to the unit vector in the
\(x_{1}\) direction.   
This combines with Lemma \ref{mixedToUniformNorms} to yield the
estimate (\ref{eq:2}). 
\end{proof}
The proof of Theorem \ref{th:main} will use partitions of
unity and coordinate changes to reduce to a case very
similar to (\ref{eq:8}) and (\ref{eq:9}) and prove
estimates of the form in (\ref{eq:6}). Our main proof will
prove Theorem \ref{th:main} in the case that no
\(\epsilon_{j}\) in (\ref{eq:4}) is zero. We have already
treated the case where some \(\epsilon_{j}=0\) and the
corresponding \(\alpha_{j}\ne 0\). If, for one or more
values of \(j\), \(\epsilon_{j}=\alpha_{j}=0\), then the
PDE in (\ref{eq:1}) is independent of the \(x_{j}\)
variables. In this case, we may obtain the inequality
(\ref{eq:6}) from the corresponding inequality in the
lower dimensional case. We record this in the proposition
below.
\begin{proposition}\label{prop4.4}
 Let  \(x=(x_{1},\tilde{x},y)\), and suppose that, for
 each  \(y\),
\begin{equation}\label{eq:14}
   ||u(\cdot,\cdot,y)||_{L^2(d\tilde{x},L^\infty(dx_{1}))}\le
   C ||f(\cdot,\cdot,y)||_{L^2(d\tilde{x},L^1(dx_{1}))}
\end{equation}
  then
\begin{equation}
   ||u||_{L^2(d\tilde{x} dy,L^\infty(dx_{1}))}\le
   C ||f(\cdot,\cdot,y)||_{L^2(d\tilde{x} dy,L^1(dx_{1}))}
\end{equation}
\end{proposition}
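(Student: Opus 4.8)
The plan is to reduce the statement to Tonelli's theorem, since it is nothing more than the assertion that a mixed norm of the form ``$L^2$ outside, $L^\infty$ (or $L^1$) inside'' is additive when the $L^2$-variables are split into a product. First I would unwind the definition of the left-hand norm. Writing $U(\tilde x,y):=\sup_{x_1}\abs{u(x_1,\tilde x,y)}$, the definition of the $\Theta$-type mixed norm gives $\norm{u}_{L^2(d\tilde x\,dy,\,L^\infty(dx_1))}=\norm{U}_{L^2(d\tilde x\,dy)}$, and because $d\tilde x\,dy$ is a product measure, the $L^2$-norm over the product equals the iterated integral:
\[
  \norm{u}_{L^2(d\tilde x\,dy,\,L^\infty(dx_1))}^2
  = \int\!\Big(\int U(\tilde x,y)^2\,d\tilde x\Big)\,dy
  = \int \norm{u(\cdot,\cdot,y)}_{L^2(d\tilde x,\,L^\infty(dx_1))}^2\,dy .
\]
The identical computation, with $\int\abs{f(x_1,\tilde x,y)}\,dx_1$ in place of $U(\tilde x,y)$, yields
\[
  \norm{f}_{L^2(d\tilde x\,dy,\,L^1(dx_1))}^2
  = \int \norm{f(\cdot,\cdot,y)}_{L^2(d\tilde x,\,L^1(dx_1))}^2\,dy .
\]

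Next I would square the hypothesis \eqref{eq:14}, which holds for every $y$, to get the pointwise-in-$y$ inequality
\[
  \norm{u(\cdot,\cdot,y)}_{L^2(d\tilde x,\,L^\infty(dx_1))}^2
  \le C^2\,\norm{f(\cdot,\cdot,y)}_{L^2(d\tilde x,\,L^1(dx_1))}^2 ,
\]
integrate both sides with respect to $y$, and substitute the two identities above. This gives $\norm{u}_{L^2(d\tilde x\,dy,\,L^\infty(dx_1))}^2 \le C^2\,\norm{f}_{L^2(d\tilde x\,dy,\,L^1(dx_1))}^2$, and taking square roots is the claim.

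The only point that warrants a word of justification is measurability, which is what makes Tonelli's theorem legitimately applicable here: one must know that $U(\tilde x,y)=\sup_{x_1}\abs{u(x_1,\tilde x,y)}$ is a measurable function of $(\tilde x,y)$, and likewise that $(\tilde x,y)\mapsto\int\abs{f(x_1,\tilde x,y)}\,dx_1$ is measurable. For $f$ this is immediate from Tonelli applied to $\abs f$, and for the $u$ produced by the construction of the previous propositions $u$ is continuous in $x_1$, so the supremum over $x_1$ may be replaced by a supremum over a countable dense set of $x_1$, making $U$ a countable supremum of measurable functions and hence measurable. With that observed there is no genuine obstacle; the ``hard part'' is purely bookkeeping, namely keeping straight which variable each norm acts in so that the product-to-iterated-integral step is applied to the $L^2$-variables only.
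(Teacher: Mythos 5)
Your argument is correct and is essentially the paper's own proof: the paper simply says to square both sides of \eqref{eq:14} and integrate with respect to \(y\), which is exactly your square-integrate-take-roots step, with the Tonelli/iterated-integral identification implicit. The additional measurability remarks are fine but only make explicit what the paper leaves unstated.
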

\begin{proof}
  Just square both sides of (\ref{eq:14}) and integrate
  with respect to  \(y\). 
\end{proof}

Henceforth, we will assume that no  \(\epsilon_{j}=0\),
and complete the squares in (\ref{eq:4}) to rewrite that
equation as
\begin{eqnarray}
  \nn
  P(\D) = \sum_{j=1}^{n}\epsilon_{j}(\Dj-\beta_{j})^{2}
  + b\qquad;\quad \epsilon_{j}=\pm 1
\end{eqnarray}
where the  \(\beta_{j}=-\epsilon_{j}\alpha_{j}\) from
(\ref{eq:4}) and  \(b=B-\sum\epsilon_{j}\beta_{j}^{2}\) .  
\begin{proposition}
   \(P(\D)\) has a real double characteristic iff  \(b=0\)
   and  \(\beta=\overline{0}\).   
\end{proposition}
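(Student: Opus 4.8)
The plan is to put the normal-form operator
\[
  P(\D) = \sum_{j=1}^{n}\epsilon_{j}(\Dj-\beta_{j})^{2} + b, \qquad \epsilon_{j}=\pm 1,
\]
into its symbol form, separate real and imaginary parts, and then simply read off when $p$ and $\nabla p$ can vanish simultaneously at a real point. Since the (partial) Fourier transform used throughout the paper sends $\Dj$ to multiplication by $i\xi_j$, the symbol is $p(\xi)=\sum_j\epsilon_j(i\xi_j-\beta_j)^2+b$, which — expanding and using $b=B-\sum_j\epsilon_j\beta_j^2$, or reading $\alpha_j=-\epsilon_j\beta_j$ off of \eqref{eq:4} directly — equals
\[
  p(\xi) = B - \sum_{j=1}^{n}\epsilon_j\xi_j^2 \;-\; 2i\sum_{j=1}^{n}\epsilon_j\beta_j\xi_j .
\]
By definition a \emph{real double characteristic} is an $\eta\in\R^n$ with $p(\eta)=0$ and $\nabla p(\eta)=0$, and for this complex-valued $p$ that means the real and imaginary parts of $p$, together with their gradients, all vanish at $\eta$.

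The one genuine computation is the gradient: $\partial_{\xi_k}p(\xi) = -2\epsilon_k\xi_k - 2i\epsilon_k\beta_k = -2\epsilon_k(\xi_k+i\beta_k)$. Because every $\epsilon_k=\pm1$ is nonzero, $\nabla p(\eta)=0$ at a real $\eta$ forces $\eta_k+i\beta_k=0$ for each $k$; taking real and imaginary parts (both $\eta$ and $\beta$ are real) this is equivalent to $\eta=\overline{0}$ and $\beta=\overline{0}$. Hence $\nabla p$ has a real zero if and only if $\beta=\overline{0}$, and the only such zero is the origin. Evaluating $p$ at the origin under the hypothesis $\beta=\overline{0}$ (so that $b=B$) gives $p(\overline{0})=B=b$, which vanishes precisely when $b=0$. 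Chaining these equivalences, $P(\D)$ has a real double characteristic iff $b=0$ and $\beta=\overline{0}$ — and in that case $P(\D)$ reduces to $\sum_j\epsilon_j\partial_{x_j}^2$, whose symbol $-\sum_j\epsilon_j\xi_j^2$ manifestly has a double zero at $\xi=0$.

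I do not expect a real obstacle here: once the operator is in completed-square normal form the argument is a single differentiation plus bookkeeping, and the point of the statement is exactly to record that the completed-square reduction concentrates the whole obstruction to the method into the pair $(b,\beta)$. The only step deserving a word of care is the symbol convention — one must use $\Dj\mapsto i\xi_j$, so that the first-order coefficients enter $p$ through the \emph{imaginary} linear term $2i\sum_j\alpha_j\xi_j$, whose gradient is the constant vector $2i\alpha$ with $\alpha_j=-\epsilon_j\beta_j$. It is this term that forces $\beta=\overline{0}$ outright — rather than merely locating a candidate characteristic at $\eta=\beta$, which is what the naive real substitution $\Dj\mapsto\xi_j$ would suggest — and so is responsible for the precise form of the conclusion.
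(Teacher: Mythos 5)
Your proof is correct and is essentially the paper's own argument: both compute the symbol $p(\eta)=\sum_j\epsilon_j(i\eta_j-\beta_j)^2+b$, observe that $\partial_{\eta_k}p=-2\epsilon_k(\eta_k+i\beta_k)$ can vanish at a real $\eta$ only when $\eta_k=\beta_k=0$ for all $k$ (since $\epsilon_k\neq0$ and $\beta$ is real), and then read off $b=0$ from $p(0)=0$. Expanding the square before differentiating, and spelling out the trivial converse, are only cosmetic differences.
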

\begin{proof}
\begin{align}\label{eq:19}
        p(\eta) = \sum\epsilon_j(i\eta_{j}-\beta_{j})^{2}+b
\\ \notag
    dp = \sum 2i\epsilon_j(i\eta_{j}-\beta_{j})d\eta_{j}
\end{align}
  so that
\[
    dp = 0\quad \iff\quad \mathrm{every}\
    \eta_{j}=-i\beta_{j}
\]
  but, as the  \(\beta_{j}\) are real, this can
    only happen if
\[
  \eta_{j} = \beta_{j}=0 
\]
If \(p\) vanishes as well, we must also have  \(b=0\).  
\end{proof}

We now begin the proof of Theorem \ref{th:main} in
earnest. We intend to use partial Fourier transforms, as
defined
in (\ref{eq:68}). To this end, we will choose special
directions  \(\Theta\in\R^{n}\) (the unit vectors
\(\Theta_{k}\) in the coordinate directions
will suffice for the proof of Theorem \ref{th:main}) and
express  \(x\in\R^{n}\) as
\begin{eqnarray*}
  x = t\Theta + \tp{x} 
\end{eqnarray*}
as in \eqref{eq:83} 
and 
write the dual variable  \(\eta\) as
\begin{eqnarray*}
 \eta = \tau\Theta + \xi \qquad\mathrm{with\ }
  \xi\in\Theta^\perp
\end{eqnarray*}
In these coordinates,
we consider \(p(\eta)\)
as a polynomial  \(p(\tau;\xi)\) in \(\tau\)
with coefficients depending on \(\xi\).
We will arrive at the estimate \eqref{eq:2} as long as the
roots of \(p\) are simple. When \(\Theta=\Theta_{k}\), 
\(\xi=(\eta_{1},\ldots\eta_{k-1},\eta_{k+1},\ldots,\eta_{n})\). If
we define
\begin{equation}\label{eq:20}
  Q_{k}(\xi) := \sum_{j\ne k}\epsilon_j(i\eta_{j}-\beta_{j})^{2}+b
\end{equation}
  then by \eqref{eq:19} we have $p(\tau,\xi) = 
  \epsilon_k(i\tau-\beta_k)^2 + Q_k(\xi)$ and its roots are
\[
  \tau_{\pm} = -i\beta_{k}\pm \sqrt{\epsilon_kQ_{k}(\xi)}
\]
  and they are simple as long as
\[
  Q_{k}(\xi)\ne 0.
\]

\begin{proposition}\label{th:ThetaEst}
  Suppose that 
  \begin{eqnarray}
    \label{eq:17}
    \supp{\widehat{f}(\eta)}\subset  \{\eta\in\R^n \big|\ |Q_{k}(\xi)|>\varepsilon\}
  \end{eqnarray}
Then there
  exists  \(u\) solving
\begin{equation}\nn
  P(\D)u=f
\end{equation}
  satisfying
\begin{equation}\label{eq:22}
    ||\ftpk{u}||_{\Theta_{k}(\infty,2)}\le\frac{1}{\sqrt{\varepsilon}}
    ||\ftpk{f}||_{\Theta_{k}(1,2)}
\end{equation}
\end{proposition}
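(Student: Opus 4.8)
The plan is to imitate the reduction used for the Helmholtz equation at the start of this section and in the proof of Proposition \ref{prop4.3}: apply the partial Fourier transform $\ftpk{}$ to $P(\D)u=f$ and then solve, for each $\xi\in\Theta_{k}^{\perp}$, a second order constant coefficient ODE in the variable $t=x_{k}$. Writing $w=\ftpk{u}$ and $g=\ftpk{f}$, the equation becomes
\[
  \epsilon_{k}(\partial_{t}-\beta_{k})^{2}w+Q_{k}(\xi)\,w=g(t,\xi),\qquad\text{i.e.}\qquad p(-i\partial_{t},\xi)\,w=g,
\]
with $p(\tau,\xi)=\epsilon_{k}(i\tau-\beta_{k})^{2}+Q_{k}(\xi)$, as in the discussion around \eqref{eq:20}. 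In the setting of Theorem \ref{th:main}, $f\in L^{2}(D_{s})$ with $D_{s}$ bounded, so Lemma \ref{mixedToUniformNorms} gives $\ftpk{f}\in\Theta_{k}(1,2)$; in particular $g(\cdot,\xi)\in L^{1}(dt)$ for a.e.\ $\xi$ (so the right side of \eqref{eq:22} is finite), and each slice $g(\cdot,\xi)$ is supported in the bounded projection of $D_{s}$ onto the $x_{k}$-axis. Moreover, since the set in \eqref{eq:17} is a cylinder in the $\tau$ direction, the hypothesis $\supp\widehat f\subset\{\,|Q_{k}(\xi)|>\varepsilon\,\}$ together with injectivity of the one-dimensional Fourier transform in $t$ forces $g(\cdot,\xi)\equiv0$ whenever $|Q_{k}(\xi)|\le\varepsilon$. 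It therefore suffices to solve and estimate the ODE on the set $\{\,|Q_{k}(\xi)|>\varepsilon\,\}$ and to put $w(\cdot,\xi)\equiv0$ off it.

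On $\{\,|Q_{k}(\xi)|>\varepsilon\,\}$ the roots $\tau_{\pm}=-i\beta_{k}\pm\sqrt{\epsilon_{k}Q_{k}(\xi)}$ are distinct, with $|\tau_{+}-\tau_{-}|=2\sqrt{|Q_{k}(\xi)|}$, so I factor $p(\tau,\xi)=-\epsilon_{k}(\tau-\tau_{+})(\tau-\tau_{-})$ and split by partial fractions, writing $w=w_{1}+w_{2}$ where $w_{1},w_{2}$ solve the first order equations $(\partial_{t}-i\tau_{+})w_{1}=c_{1}(\xi)g$ and $(\partial_{t}-i\tau_{-})w_{2}=c_{2}(\xi)g$ with $|c_{1}(\xi)|=|c_{2}(\xi)|=|\tau_{+}-\tau_{-}|^{-1}$, exactly as in \eqref{modelProblems}. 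Each $w_{j}$ is then given by an explicit Duhamel formula of the type \eqref{eq:60}--\eqref{eq:69} and \eqref{eq:9}, namely $w_{j}(t,\xi)=c_{j}(\xi)\int e^{i\tau_{j}(t-s)}g(s,\xi)\,ds$ (with $\tau_{1}=\tau_{+}$, $\tau_{2}=\tau_{-}$), where the limits of integration --- from $-\infty$ to $t$, or from $t$ to $\infty$ --- are chosen according to the sign of $\Im\tau_{j}$ so that $|e^{i\tau_{j}(t-s)}|\le1$ on the range of integration. The kernel being bounded by one there gives $\norm{w_{j}(\cdot,\xi)}_{L^{\infty}(dt)}\le|c_{j}(\xi)|\,\norm{g(\cdot,\xi)}_{L^{1}(dt)}$, hence
\[
  \norm{w(\cdot,\xi)}_{L^{\infty}(dt)}\le\frac{1}{\sqrt{|Q_{k}(\xi)|}}\norm{\ftpk{f}(\cdot,\xi)}_{L^{1}(dt)}\le\frac{1}{\sqrt{\varepsilon}}\norm{\ftpk{f}(\cdot,\xi)}_{L^{1}(dt)}.
\]

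To finish, I square this pointwise-in-$\xi$ bound and integrate over $\xi\in\Theta_{k}^{\perp}$: the left side is $\norm{\ftpk{u}}_{\Theta_{k}(\infty,2)}^{2}$ and the right side is $\varepsilon^{-1}\norm{\ftpk{f}}_{\Theta_{k}(1,2)}^{2}$, which is precisely \eqref{eq:22}; unlike Lemma \ref{mixedToUniformNorms}, this step uses no Plancherel identity, since $\ftpk{f}$ appears on both sides. Setting $u=\finv{\Theta_{k}^{\perp}}{w}$, finiteness of $\norm{\ftpk{u}}_{\Theta_{k}(\infty,2)}$ shows $w\in L^{2}(d\xi;L^{\infty}(dt))\subset L^{2}_{loc}$, so $u\in L^{2}_{loc}(\R^{n})$ is well defined, and $P(\D)u=f$ holds because $w$ satisfies $p(-i\partial_{t},\xi)w=g$ by construction.

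The one place that needs care is the first order solution operator when $\epsilon_{k}Q_{k}(\xi)$ is a negative real number: then $\tau_{\pm}$ are purely imaginary, $i\tau_{\pm}$ is real, and $e^{i\tau_{\pm}(t-s)}$ is a real exponential that grows in one direction. One still obtains a kernel bounded by one on the range of integration by integrating from the side on which it decays (from either side when $i\tau_{j}=0$), the integral converging because $g(\cdot,\xi)$ has compact support in $t$. It then only remains to check that all these choices --- equivalently, a branch of $\sqrt{\epsilon_{k}Q_{k}(\xi)}$, which exists measurably because $Q_{k}$ is a polynomial not vanishing on $\{\,|Q_{k}(\xi)|>\varepsilon\,\}$ --- can be made measurably in $\xi$, so that $w$, and hence $u$, is a genuine measurable function. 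The rest of the bookkeeping is identical to that already carried out in this section.
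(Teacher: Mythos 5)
Your proof is correct and takes essentially the same route as the paper's: partial Fourier transform in $\Theta_k^\perp$, factor the resulting second–order ODE into two first–order factors with roots $-i\beta_k\pm\sqrt{\epsilon_k Q_k(\xi)}$, solve each by a Duhamel formula with the direction of integration chosen by the sign of the real part of the exponent so the kernel is bounded by one, and then take $L^\infty(dt)$–$L^1(dt)$ bounds and square–integrate in $\xi$, giving the factor $1/\sqrt{|Q_k(\xi)|}\le 1/\sqrt{\varepsilon}$ on the support of $\widehat f$. One small caveat: do not lean on compact support of $f$ in $D_s$ (the proposition is applied in the proof of Theorem \ref{th:main} to the pieces $f_k=M_{\Phi_k}f$, which need not be compactly supported); this is harmless because your argument really only needs $\ftpk{f}(\cdot,\xi)\in L^1(dt)$ for a.e.\ $\xi$, which is exactly the finiteness of the right-hand side of \eqref{eq:22}, even when the exponent is purely oscillatory.
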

\begin{proof}
  With  \(x=t\Theta_{k}+\tp{x}\), we again use
the partial Fourier transform
\begin{eqnarray}
  \nn
  \ftpk{u}(t,\xi) = \frac{1}{(2\pi)^{(n-1)/2}}
\int u(t,\tpk{x})e^{-i\xi\cdot \tpk{x}}d\tpk{x}
\end{eqnarray}
Letting \(w:=\ftpk{u}(t,\xi)\) and  \(g=\ftp{f}(t,\xi)\),
we see that 
\begin{equation}\nn
  \epsilon_k \left(\frac{d}{dt}-\beta_{1}\right)^{2}w
  +Q_{k}(\xi)w = g
\end{equation}
  which factors as
\[
  \left(\frac{d}{dt}-(\beta_{1}+\sqrt{\epsilon_kQ_{k}})\right)
  \left(\frac{d}{dt}-(\beta_{1}-\sqrt{\epsilon_kQ_{k}})\right)w = 
  \epsilon_k g
\]
so that we can  write a solution
formula analogous to that in \eqref{eq:59} through
\eqref{eq:69}; i.e.
\begin{eqnarray}
  \nn
  w = \frac{\epsilon_k}{\sqrt{Q_{k}}}\left(\int
  e^{(\beta_{1}+\sqrt{\epsilon_kQ_{k}})(t-s)}g(s,\xi)ds -
  \int
  e^{(\beta_{1}-\sqrt{\epsilon_kQ_{k}})(t-s)}g(s,\xi)ds\right)
\end{eqnarray}
where the limits of integration in the first integral are
 \(-\infty<s<t\) for those  \(\xi\) that satisfy
 \(\Re{(\beta_{1}+\sqrt{\epsilon_kQ_{k}})}>0\) and    \(t<s<\infty\)
 for  \(\xi\) with
 \(\Re{(\beta_{1}+\sqrt{\epsilon_kQ_{k}})}<0\). The limits in the
 the second integral are chosen similarly, based on the
 real part of  \(\beta_{1}-\sqrt{\epsilon_kQ_{k}}\). We
 may choose either set of limits if the real part is
 zero.\\

We now obtain the estimate (\ref{eq:22}) just as in
(\ref{eq:10}) through (\ref{eq:31}).
\end{proof}

Our next step is to show that any compactly supported
\(f\in L^2\) can be decomposed into a sum of sources, each
of which will satisfy (\ref{eq:17}) for some
\(\Theta_{k}\). To accomplish this, we let  \(\phi(t)\in C^{\infty}_{0}(\R)\)
be a positive bump function, equal to 0 for  \(|t|<1\) and
1 for  \(|t|>2\). We let  \(\phi_{\varepsilon}(t) =
\phi(\frac{t}{\varepsilon})\).  Again writing  \(\eta\in\R^{n}\) as
\begin{eqnarray}
  \nn
  \eta=\tau\Theta_{k}+\xi
\end{eqnarray}
it is natural to define the multiplier
\begin{eqnarray}
  \nn
  \Phi_{k}(\eta) = \phi_{\varepsilon}(|Q_{k}(\xi)|)
\end{eqnarray}
which will equal 0 near the set where  \(Q_{k}\) is
small. It is, however, more convenient to define
\begin{eqnarray}
  \label{eq:32}
  \Phi_{k}(\eta) = \phi_{\varepsilon}(\Re{Q_{k}}) +
\phi_{\varepsilon}(\Im{Q_{k}}) - \phi_{\varepsilon}(\Re{Q_{k}})\phi_{\varepsilon}(\Im{Q_{k}})
\end{eqnarray}
which equals 0 if  both \(\Re{Q_{k}}<\varepsilon\) and 
  \(\Im{Q_{k}}<\varepsilon\), and equals 1 if either or both is greater
  than  \(2\varepsilon\). We decompose  \(f\) as
\begin{align}
    &\widehat{f} = \widehat{f_{1}} + \widehat{f_{2}}+\widehat{f_{3}}+\ldots \widehat{f_{n+1}}&
\notag\\ 
    &= \Phi_{1}\widehat{f} + \Phi_{2}(1-\Phi_{1})\widehat{f}+
\Phi_{3}(1-\Phi_{2})(1-\Phi_{1})\widehat{f}
+\ldots
      +\mathop{\prod}\limits_{j=1}^{n}(1-\Phi_{j})\widehat{f}
\label{eq:33}
\end{align}
  and solve
\begin{equation}\label{eq:63}
  P(\D)u_{k}=f_{k}
\end{equation}
  which will guarantee that, for all \(k=1\ldots n\),
  \(f_{k}\)
  will satisfy the hypothesis (\ref{eq:17}) of Proposition
  \ref{th:ThetaEst} with direction vector \(\Theta_{k}\). We will use that proposition to
  construct and estimate the \(u_{k}\).
  To estimate the solution to \(P(\D)u_{n+1}=f_{n+1}\)
  we will need the following:

\begin{lemma}\label{th:zqeps}
  Let 
  \begin{eqnarray}
    Z^{\varepsilon}_{Q_{k}} = \{\eta\in\R^n \big|\ 
    |\Re{Q_{k}(\xi)}|<\varepsilon\ \mathrm{and}\ |\Im{Q_{k}(\xi)}|<\varepsilon\}
  \end{eqnarray}
then  \(\displaystyle \mathop{\cap}\limits_{k=1}^{n}Z^{\varepsilon}_{Q_{k}}\) is bounded
with diameter less than  \(4\sqrt{2n\varepsilon}\).
Moreover, if P has no double characteristics, and
\(\varepsilon\) is chosen small enough, 
\begin{eqnarray}
  \label{eq:36}
  \mathop{\cap}\limits_{k=1}^{n}Z^{\varepsilon}_{Q_{k}}\cap Z^{\varepsilon}_{P}=\emptyset
\end{eqnarray}
where $Z^\varepsilon_{P}$ is defined similarly as $Z^\varepsilon_{Q_k}$.
\end{lemma}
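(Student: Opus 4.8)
The plan is to extract both assertions from the way the polynomials \(Q_k\) sit inside \(p\). By \eqref{eq:19}, \eqref{eq:20} and \(\epsilon_k=\pm1\) one has \(Q_k(\xi)=p(\eta)-\epsilon_k(i\eta_k-\beta_k)^2\); separating real and imaginary parts of \(\epsilon_k(i\eta_k-\beta_k)^2=\epsilon_k(\beta_k^2-\eta_k^2)-2i\epsilon_k\beta_k\eta_k\) gives, for every \(k\),
\[
  \Re Q_k(\xi)=\Re p(\eta)-\epsilon_k(\beta_k^2-\eta_k^2),\qquad \Im Q_k(\xi)=\Im p(\eta)+2\epsilon_k\beta_k\eta_k .
\]
Summing over \(k\) yields \(\sum_k\Re Q_k=(n-1)\Re p+b\) and \(\sum_k\Im Q_k=(n-1)\Im p\), hence
\[
  \eta_j^2-c_j=\epsilon_j\Bigl(\Re Q_j(\xi)-\tfrac{1}{n-1}\sum_k\Re Q_k(\xi)\Bigr),\qquad 2\epsilon_j\beta_j\eta_j=\Im Q_j(\xi)-\tfrac{1}{n-1}\sum_k\Im Q_k(\xi),
\]
with \(c_j:=\beta_j^2+\epsilon_j b/(n-1)\). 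Therefore, on \(\bigcap_{k=1}^n Z^\varepsilon_{Q_k}\) (using all \(n\) of the inequalities \(\abs{\Re Q_k}<\varepsilon\), resp.\ \(\abs{\Im Q_k}<\varepsilon\)),
\[
  \abs{\eta_j^2-c_j}<\delta\qquad\text{and}\qquad \abs{\beta_j\eta_j}<\tfrac{\delta}{2},\qquad j=1,\dots,n,\qquad \delta:=\tfrac{2n-1}{n-1}\varepsilon\ \ (\le 3\varepsilon\ \text{if}\ n\ge 2).
\]
I take \(n\ge 2\) throughout; for \(n=1\), \(Q_1\) is the constant \(b\), and in the proof of Theorem \ref{th:main} the lemma is only invoked for \(b\ne 0\), where it is trivial. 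I also recall that ``diameter'' means the width \(\sup_{\ell}\mu_1(\ell\cap\cdot)\) defined before \eqref{eq:53}, not the Euclidean diameter.

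For the diameter bound I argue one line at a time. Fix a line \(\ell=\{\eta_0+t\omega:t\in\R\}\) with \(\abs{\omega}=1\) and pick \(j^\ast\) with \(\abs{\omega_{j^\ast}}=\max_j\abs{\omega_j}\), so \(\abs{\omega_{j^\ast}}\ge n^{-1/2}\). Since \(\bigcap_k Z^\varepsilon_{Q_k}\subset\{\abs{\eta_{j^\ast}^2-c_{j^\ast}}<\delta\}\), the set of \(t\) with \(\eta_0+t\omega\in\bigcap_k Z^\varepsilon_{Q_k}\) lies inside \(\{t:\abs{(\eta_{0,j^\ast}+t\omega_{j^\ast})^2-c_{j^\ast}}<\delta\}\); as \(t\mapsto(\eta_{0,j^\ast}+t\omega_{j^\ast})^2\) is a parabola, this is empty if \(c_{j^\ast}+\delta\le 0\) and otherwise a union of at most two intervals of total length at most
\[
  \frac{2\bigl(\sqrt{c_{j^\ast}+\delta}-\sqrt{\max(c_{j^\ast}-\delta,0)}\bigr)}{\abs{\omega_{j^\ast}}}\ \le\ \frac{2\sqrt{2\delta}}{\abs{\omega_{j^\ast}}}\ \le\ 2\sqrt{2n\delta},
\]
where the middle step is the elementary estimate \(\sqrt{a+\delta}-\sqrt{\max(a-\delta,0)}\le\sqrt{2\delta}\) for \(a\ge-\delta\). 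Hence \(\mu_1\bigl(\ell\cap\bigcap_k Z^\varepsilon_{Q_k}\bigr)\le 2\sqrt{2n\delta}\le 2\sqrt{3}\sqrt{2n\varepsilon}<4\sqrt{2n\varepsilon}\), and since \(\ell\) was arbitrary this is the claimed diameter bound; in particular \(\bigcap_k Z^\varepsilon_{Q_k}\) is bounded.

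For the emptiness assertion \eqref{eq:36} I split on whether \(b=0\). If \(b\ne 0\): from the identity \((n-1)\Re p=\sum_k\Re Q_k-b\), on \(\bigcap_k Z^\varepsilon_{Q_k}\) we get \(\abs{\Re p+b/(n-1)}<n\varepsilon/(n-1)\), so \(\abs{\Re p}\ge(\abs{b}-n\varepsilon)/(n-1)\ge\varepsilon\) as soon as \(\varepsilon\le\abs{b}/(2n-1)\); then \(\eta\notin Z^\varepsilon_P\), so the intersection in \eqref{eq:36} is already empty. If \(b=0\): since \(P\) has no real double characteristic and \(b=0\), the earlier proposition forces \(\beta\ne\overline{0}\), say \(\beta_m\ne 0\); then \(c_m=\beta_m^2>0\), and on \(\bigcap_k Z^\varepsilon_{Q_k}\) we would need \(\eta_m^2>\beta_m^2-\delta\) and, from \(\abs{\beta_m\eta_m}<\delta/2\), \(\eta_m^2<\delta^2/(4\beta_m^2)\), which is impossible once \(\delta\)---equivalently \(\varepsilon\)---is small enough depending only on \(P\); so in fact \(\bigcap_k Z^\varepsilon_{Q_k}=\emptyset\). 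Either way a sufficiently small \(\varepsilon=\varepsilon(P,n)\) gives \eqref{eq:36}.

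The point I expect to require the most care---and the likely source of confusion if ``diameter'' is read in the wrong sense---is that the \emph{Euclidean} diameter of \(\bigcap_k Z^\varepsilon_{Q_k}\) does not shrink with \(\varepsilon\): the set is typically disconnected, and for \(\Delta+k^2\) (so \(\epsilon_j\equiv 1\), \(\beta\equiv\overline{0}\), \(b=k^2\)) it is a neighbourhood of the \(2^n\) points \((\pm k/\sqrt{n-1},\dots,\pm k/\sqrt{n-1})\) where the characteristic cylinders of \eqref{eq:65} meet, so its Euclidean diameter stays of order \(k\). What shrinks like \(\sqrt\varepsilon\) is the length of the intersection with any single line, because a line meets only \(O(1)\) of the small components; the argument above makes this quantitative by controlling just the coordinate \(\eta_{j^\ast}\) of largest \(\abs{\omega_{j^\ast}}\), and sharpening the constant to \(4\sqrt{2n\varepsilon}\) is then only the square-root estimate plus \(\abs{\omega_{j^\ast}}\ge n^{-1/2}\). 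Everything else---the algebraic identities of the first paragraph and the two-case dichotomy for \eqref{eq:36}---is routine once these are in hand.
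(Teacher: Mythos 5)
Your proof is correct, and its core is the paper's: linear combinations of the \(Q_k\) isolate each \(\epsilon_m(i\eta_m-\beta_m)^2\) (your identity \(\eta_j^2-c_j=\epsilon_j\bigl(\Re Q_j-\tfrac{1}{n-1}\sum_k\Re Q_k\bigr)\) is a rearrangement of the paper's \(\sum_{k\ne m}Q_k=(n-2)Q_m+(n-1)\epsilon_m(i\eta_m-\beta_m)^2+b\)), and the quadratic estimate of Lemma \ref{th:simplequad}, which you reprove inline, confines each coordinate \(\eta_m\) to a set of one-dimensional measure \(O(\sqrt{\varepsilon})\). Where you genuinely add or diverge: (i) you make explicit the step the paper compresses into ``the diameter is at most \(\sqrt n\) times \(4\sqrt{2\varepsilon}\)'' --- reading ``diameter'' as \(\sup_\ell \mu_1(\ell\cap\cdot)\), as in Theorem \ref{th:main}, choosing the coordinate with \(\abs{\omega_{j^*}}\ge n^{-1/2}\), and rescaling the one-dimensional measure; this reading is indeed forced, since (as your Helmholtz computation shows) the Euclidean diameter of \(\bigcap_k Z^\varepsilon_{Q_k}\) does not shrink with \(\varepsilon\), and the line-measure bound is exactly what Proposition \ref{th:unplus1} consumes; (ii) in the case \(b=0\) of \eqref{eq:36} the paper lower-bounds \(\abs{p}\) on the intersection via \(p-Q_k=\epsilon_k(i\eta_k-\beta_k)^2\), whereas you combine the real-part constraint \(\eta_m^2>\beta_m^2-\delta\) with the imaginary-part constraint \(\abs{\beta_m\eta_m}<\delta/2\) to show \(\bigcap_k Z^\varepsilon_{Q_k}\) is already empty --- both work, and yours gives the marginally stronger conclusion; (iii) in the case \(b\ne0\) your working with \(\Re p\) alone, rather than \(\abs{p}\), cleanly rules out membership in \(Z^\varepsilon_P\) and avoids the paper's small constant slop between ``\(\abs{p}\ge\varepsilon\)'' and the definition of \(Z^\varepsilon_P\). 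Your restriction to \(n\ge 2\) is consistent with the paper's own proof (which divides by \(n-1\)), so the \(n=1\) aside is harmless.
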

Before we begin the proof we record one simple lemma,
which we will use here and again in the proof of Proposition
\ref{th:qEst}.
\begin{lemma}\label{th:simplequad}
  Suppose that  \(q(t)=t^{2}-B\) and
  \(Z^{q}_{\delta}=\{t\in\R \big| |q(t)|<\delta\}\), then
  \begin{eqnarray}
    \mu\left(Z^{q}_{\delta}\right)\le 4\min\left(\sqrt{\delta},\frac{\delta}{\sqrt{B}}\right)
  \end{eqnarray}
\end{lemma}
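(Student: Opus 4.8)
\section*{Proof proposal for Lemma \ref{th:simplequad}}

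The plan is to write $Z^{q}_{\delta}$ explicitly as a (symmetric) union of at most two intervals and then estimate its total length in two ways at once. The condition $|t^{2}-B|<\delta$ is equivalent to $B-\delta<t^{2}<B+\delta$, so $Z^{q}_{\delta}$ is symmetric about the origin; writing $(x)_{+}=\max(x,0)$, it is empty when $B+\delta\le 0$ and otherwise has measure
\[
  \mu\bigl(Z^{q}_{\delta}\bigr) = 2\bigl(\sqrt{B+\delta}-\sqrt{(B-\delta)_{+}}\bigr).
\]
If $B\le 0$ (the regime where the factor $\delta/\sqrt{B}$ is not meaningful), then $B+\delta\le\delta$, so this is at most $2\sqrt{\delta}$ and there is nothing more to do; I would therefore assume $B>0$ from here on.

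For $B>0$ the idea is simply to rationalize the difference of square roots. Multiplying and dividing by $\sqrt{B+\delta}+\sqrt{(B-\delta)_{+}}$ gives
\[
  \sqrt{B+\delta}-\sqrt{(B-\delta)_{+}} = \frac{(B+\delta)-(B-\delta)_{+}}{\sqrt{B+\delta}+\sqrt{(B-\delta)_{+}}} \le \frac{2\delta}{\sqrt{B+\delta}},
\]
where in the numerator I used that $(B+\delta)-(B-\delta)_{+}=\min(2\delta,\,B+\delta)\le 2\delta$ (this is the only place where the two regimes $B\le\delta$ and $B>\delta$, i.e.\ whether or not the inner interval touches the origin, enter), and in the denominator I dropped the nonnegative term $\sqrt{(B-\delta)_{+}}$. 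Since $\sqrt{B+\delta}\ge\sqrt{\max(B,\delta)}=\max(\sqrt{B},\sqrt{\delta})$, it follows that
\[
  \sqrt{B+\delta}-\sqrt{(B-\delta)_{+}} \le \frac{2\delta}{\max(\sqrt{B},\sqrt{\delta})} = 2\min\bigl(\sqrt{\delta},\,\delta/\sqrt{B}\bigr),
\]
and doubling yields the asserted bound $\mu(Z^{q}_{\delta})\le 4\min(\sqrt{\delta},\,\delta/\sqrt{B})$.

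There is no genuine obstacle here; the statement is elementary. The only points that need a little care are the bookkeeping around $(B-\delta)_{+}$ (equivalently, the collapse of one of the two intervals to touch $0$ when $B\le\delta$) and the degenerate case $B\le 0$, handled separately above. Routing both halves of the estimate through the single inequality $\sqrt{B+\delta}\ge\max(\sqrt{B},\sqrt{\delta})$ lets one avoid an explicit case split in the main line of argument and makes the $\min$ appear naturally.
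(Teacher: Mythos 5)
Your proof is correct and follows essentially the same route as the paper's: describe $Z^q_\delta$ as a symmetric pair of intervals, rationalize the difference of square roots, and bound the denominator below by $\max(\sqrt{B},\sqrt{\delta})$ to produce the $\min$. If anything, your bookkeeping with $(B-\delta)_+$ and the observation that the numerator is $\min(2\delta,B+\delta)$ is slightly more careful than the paper's displayed equality, which is strictly an inequality when $B<\delta$.
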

\begin{proof}
  If  \(B<-\delta\),  \(Z^{q}_{\delta}\) is empty, so
  assume that is not the case and $t\in Z^\varepsilon_\delta$. Then
  \begin{eqnarray}
    \nn
    \max(0,B-\delta)\leqslant t^{2}<B+\delta
  \end{eqnarray}
so  \(\pm t\) belongs to the interval
\(\left[\sqrt{\max(0,B-\delta)},\sqrt{B+\delta}\right]\), which
has length
\begin{eqnarray}
  \nn
  \sqrt{B+\delta} - \sqrt{\max(0,B-\delta)} &=&
\frac{2\delta}{\sqrt{B+\delta} + \sqrt{\max(0,B-\delta)}}
\\
&\le&\frac{2\delta}{\max(\sqrt{B},\sqrt{\delta})}
\end{eqnarray}
\end{proof}

\begin{proof}[Proof of Lemma \ref{th:zqeps}]
  If
  \(\eta\in\mathop{\cap}\limits_{k=1}^{n}Z^{\varepsilon}_{Q_{k}}\),
  we will show that, each coordinate, \(\eta_{m}\)
  belongs to the union of two intervals, with total length
  at most  \(4\sqrt{2\varepsilon}\), so that the diameter of the set
  is no more than  \(\sqrt{n}\) times \(4\sqrt{2\varepsilon}\). 
  For
  \(\eta\in\mathop{\cap}\limits_{k=1}^{n}Z^{\varepsilon}_{Q_{k}}\),
\[
    \left|\sum_{k\ne m}Q_{k}\right| \le (n-1)\varepsilon
\]
  and
\begin{align*}
  \sum_{k\ne m}Q_k &= \sum_{k\ne m} \left[ \sum_{j\ne k}
    \epsilon_j (i\eta_j - \beta_j)^2 + b \right]\\
  &= (n-2) \left[ \sum_{j\ne m} \epsilon_j (i\eta_j-\beta_j)^2 + b \right] 
  +(n-1) \epsilon_m (i\eta_m-\beta_m)^2 + b\\
  &= (n-2) Q_m +(n-1) \epsilon_m (i\eta_m-\beta_m)^2 + b,
\end{align*}
  so
\[
  |(i\eta_m-\beta_m)^2 \pm b/(n-1)| \le (n-1)\varepsilon + (n-2)\varepsilon < 2\varepsilon.
\]
The real part of  \((i\eta_{j}-\beta_{j})^{2}\pm b/(n-1)\) is
\(-\eta_{m}^{2}+B\) with  \(B = \beta_{m}^{2}\pm b/(n-1)\), so
we may invoke Lemma \ref{th:simplequad}  with
\(\delta=2\varepsilon\) to conclude that  \(\eta_{m}\) belongs to
set with diameter at most \(4\sqrt{2\varepsilon}\).\\

We perform a similar calculation to establish
(\ref{eq:36}). The absence of real double characteristics means
that either  \(b\) or some  \(\beta_{j}\) in
(\ref{eq:19}) is nonzero. 
For \(\eta\in\mathop{\cap}\limits_{k=1}^{n}Z^{\varepsilon}_{Q_{k}}\), 
\begin{eqnarray*}
  \left|\sum_{k=1}^{n}Q_{k}\right|&\le& n\varepsilon
\\
\left|(n-1)\sum_{k=1}^{n}\epsilon_{k}(i\eta_{k}-\beta_{k})^{2}+nb\right|&\le&
  n\varepsilon
\\
\left|(n-1)p(\eta)+b\right|&\le& n\varepsilon
\\
\left|p(\eta)\right|&\ge& \frac{|b|-n\varepsilon}{n-1}
\\
&\ge&\varepsilon
\end{eqnarray*}
as long as  \(|b|>0\) and  \(\varepsilon\) is chosen sufficiently smaller
than  \(|b|\) . If  \(b=0\), then some  \(\beta_{k}\ne0\) and
\begin{eqnarray*}
 p-Q_{k} = \epsilon_k(i\eta_{k}-\beta_{k})^{2}
\\
|p|\ge|i\eta_{k}-\beta_{k}|^{2}-|Q_{k}|
\\
|p|\ge \beta_{k}^{2}-\varepsilon
\\\ge\varepsilon
\end{eqnarray*}
for  \(\varepsilon\) sufficiently smaller than  \(\beta_{k}^{2}\). 
\end{proof}

Proposition \ref{th:ThetaEst} gives us the estimates
\begin{eqnarray}
  \nn
  ||\f{\Theta_k^\perp}{u_{k}}||_{\Theta_{k}(\infty,2)}\le\frac{1}{\sqrt{\varepsilon}}
    ||\f{\Theta_k^\perp}{f_{k}}||_{\Theta_{k}(1,2)}
\end{eqnarray}
for \(k=1\ldots n\). To estimate  \(u_{n+1}\), we prove

\begin{proposition}\label{th:unplus1}
  Suppose that  \(\supp \widehat{f}\) has  diameter at most
  \(d \) , and further that  \(|P(\eta)|>\varepsilon\) on
  \(\supp \widehat{f}\). Then  
\[
  u := \mathscr{F}^{-1}\left(\frac{\widehat{f}}{P}\right)
\qquad
  \mathrm{solves}
\qquad
  P(D)u = f
\]
  and, for any  unit vector \(\Theta\),
\begin{equation}
  ||\ftp{u}||_{\Theta(\infty,2)} \le \frac{d}{\varepsilon}||\ftp{f}||_{\Theta(1,2)}
\end{equation}
\end{proposition}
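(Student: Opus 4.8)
Here is the plan.

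The plan is to prove the bound slice by slice in the variable $\xi_{\Theta^\perp}\in\Theta^\perp$, using only two facts: the symbol is bounded below by $\varepsilon$ on $\supp\widehat{f}$, and — because, in the terminology of the paper, the diameter of a set is the supremum over all lines of the length of its intersection with that line — every line in the direction $\Theta$ meets $\supp\widehat{f}$ in a set of one-dimensional measure at most $d$.

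First I would dispatch the easy assertions. The quotient $\widehat{f}/P$ makes sense (in the application $f=f_{n+1}\in\L2$ has $\widehat{f}$ of bounded support, so $|P|>\varepsilon$ there gives $\widehat{f}/P\in\L2$ of bounded support and $u\in\L2(\R^n)$), and $\widehat{P(D)u}=P\,\widehat{u}=P\cdot(\widehat{f}/P)=\widehat{f}$, so $P(D)u=f$. For the quantitative claim, split $\eta=\tau\Theta+\xi$ with $\xi\in\Theta^\perp$. Since the full Fourier transform factors as the one-dimensional transform $\f{\Theta}{}$ in the direction $\Theta$ composed with $\f{\Theta^\perp}{}$, we have $\f{\Theta^\perp}{u}=\finv{\Theta}{\widehat{u}}$, that is
\[
  \f{\Theta^\perp}{u}(t\Theta+\xi)=\frac{1}{\sqrt{2\pi}}\int_{-\infty}^{\infty}e^{it\tau}\,\frac{\widehat{f}(\tau\Theta+\xi)}{P(\tau\Theta+\xi)}\,d\tau .
\]

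Next I would estimate this for each fixed $\xi$. Taking absolute values, using $|P|>\varepsilon$ on $\supp\widehat{f}$, and bounding the integral of $|\widehat{f}(\tau\Theta+\xi)|$ over the slice $\{\tau:\tau\Theta+\xi\in\supp\widehat{f}\}$ — whose length is at most $d$ — by $d\,\sup_\tau|\widehat{f}(\tau\Theta+\xi)|$, gives
\[
  \sup_t\abs{\f{\Theta^\perp}{u}(t\Theta+\xi)}\le\frac{d}{\sqrt{2\pi}\,\varepsilon}\,\sup_\tau\abs{\widehat{f}(\tau\Theta+\xi)} .
\]
Since $\tau\mapsto\widehat{f}(\tau\Theta+\xi)$ is the one-dimensional Fourier transform of $t\mapsto\f{\Theta^\perp}{f}(t\Theta+\xi)$, the elementary bound $\norm{\f{\Theta}{h}}_{\L\infty}\le\frac{1}{\sqrt{2\pi}}\norm{h}_{\L1}$ yields $\sup_\tau\abs{\widehat{f}(\tau\Theta+\xi)}\le\frac{1}{\sqrt{2\pi}}\norm{\f{\Theta^\perp}{f}(t\Theta+\xi)}_{\L1(dt)}$, and therefore
\[
  \sup_t\abs{\f{\Theta^\perp}{u}(t\Theta+\xi)}\le\frac{d}{2\pi\varepsilon}\,\norm{\f{\Theta^\perp}{f}(t\Theta+\xi)}_{\L1(dt)} .
\]
Squaring, integrating in $\xi$ over $\Theta^\perp$, and taking square roots produces $\norm{\f{\Theta^\perp}{u}}_{\Theta(\infty,2)}\le\frac{d}{2\pi\varepsilon}\norm{\f{\Theta^\perp}{f}}_{\Theta(1,2)}\le\frac{d}{\varepsilon}\norm{\f{\Theta^\perp}{f}}_{\Theta(1,2)}$, which is the asserted estimate.

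The only genuine choice in the argument is how to control $\int|\widehat{f}(\tau\Theta+\xi)|\,d\tau$: estimating it as $d$ times the sup in $\tau$ of $|\widehat{f}|$, and then passing from that sup to the $\L1$-in-$t$ norm of $\f{\Theta^\perp}{f}$, is exactly what makes the power of $d$ linear and the right-hand norm the $\Theta(1,2)$ norm; a Cauchy--Schwarz plus Plancherel estimate would instead give $\sqrt{d}$ and the $\Theta(2,2)$ norm, which would not be in the form needed to combine with Proposition \ref{th:ThetaEst} and Lemma \ref{mixedToUniformNorms} toward \eqref{eq:2}. I do not expect any real obstacle beyond keeping the mixed-norm bookkeeping of Definition \ref{mixedNormSpaceDef} straight.
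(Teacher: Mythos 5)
Your proposal is correct and is essentially the paper's own argument: the paper writes $\ftp{u}=\finv{\Theta}{\{\chi_{\supp\widehat{f}}/P\}}\ast_t\ftp{f}$ and applies Young's inequality, which is exactly your two successive $L^1\to L^\infty$ Fourier bounds packaged as a convolution estimate, yielding the same slicewise bound $\frac{d}{2\pi\varepsilon}\le\frac{d}{\varepsilon}$ before squaring and integrating in $\xi$. No gap; the bookkeeping with the $\Theta(1,2)$ and $\Theta(\infty,2)$ norms is handled as in the paper.
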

\begin{proof}  We write  \(\eta=\tau\Theta+\xi\), and  inverse
  Fourier transform in the  \(\Theta\) direction, obtaining
\begin{equation}\nn
  |\ftp{u}(t,\xi)| \le \left| \finv{\Theta}{\left\{ \frac{\chi_{\supp \widehat{f}}}{P} \right\}} 
  \ast_t \f{\Theta^\perp}{f} \right| \le \frac{d}{\varepsilon} \int | \f{\Theta^\perp}{f}(t',\xi)| dt'
\end{equation}
  so for each fixed  \(\xi\),
\[
 ||\ftp{u}(\cdot,\xi)||_{L^{\infty}}\le \frac{d}{\varepsilon}||\ftp{f}(\cdot,\xi)||_{L^1(dt)}
\]
  Taking  \(L^{2}(d\xi)\) norms of both sides yields
\[
 ||\ftp{u}||_{\Theta(\infty,2)} \le \frac{d}{\varepsilon}||\ftp{f}||_{\Theta(1,2)}
\]
\end{proof}

To complete the proof of Theorem \ref{th:main}, we need
only show that,  for \hbox{\(k=1\ldots n+1\)},
\begin{eqnarray}
  \label{eq:39}
  ||\ftpk{f_{k}}||_{\Theta_{k}(1,2)} \le C ||\ftpk{f}||_{\Theta_{k}(1,2)}
\end{eqnarray}
We will then apply Lemma \ref{mixedToUniformNorms} to conclude that
each  \(u_{k}\)  satisfies 
\begin{equation}\nn
  ||u_{k}||_{L^2(D_{r})} \le \sqrt{d_{r}}||\ftpk{u_{k}}||_{\Theta(\infty,2)}
\end{equation}
  and
  \begin{eqnarray}\nn
  ||\ftpk{f}||_{\Theta_{k}(1,2)} \le C
    \sqrt{d_{s}}||f||_{L^2(D_{s)}}\nn
  \end{eqnarray}
Recalling that  \(u=\sum u_{k}\) in
(\ref{eq:63}) will then finish the proof of Theorem
\ref{th:main}. \textit{Note that we can't apply Lemma \ref{mixedToUniformNorms}
  directly to the  \(f_{k}\) because their
  supports need not be contained in the support of  \(f\).}\\

In order to establish (\ref{eq:39}) for  \(f_{k}\) defined
as in  \eqref{eq:33}, we need to estimate  \(||\ftpk{
  M_{\Psi_{j}}f}||_{\Theta_{k}(1,2)}\) for all  \(j\) and
\(k\). The case  \(j=k\) is the simplest.

\begin{lemma}\label{th:jequalsk} Let  \(\Psi_{k}\) denote either \(\phi_{\varepsilon}(\Re{Q_{k}})\) or
   \(\phi_{\varepsilon}(\Im{Q_{k}})\). Then, 
  \begin{eqnarray}\label{eq:44}
   ||\ftpk{ M_{\Psi_{k}}f}||_{\Theta_{k}(1,2)}\le
    ||\ftpk{f}||_{\Theta_{k}(1,2)}
  \end{eqnarray}
\end{lemma}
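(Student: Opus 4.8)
The key observation is that $Q_{k}(\xi)$ depends only on $\xi\in\Theta_{k}^{\perp}$ --- by its definition \eqref{eq:20} it does not involve $\eta_{k}$, the coordinate dual to the $\Theta_{k}$ direction. Hence the multiplier $\Psi_{k}$ (whether it is $\phi_{\varepsilon}(\Re{Q_{k}})$ or $\phi_{\varepsilon}(\Im{Q_{k}})$) is constant in the $\Theta_{k}$ direction, so $M_{\Psi_{k}}$ is exactly a multiplier of the type treated in the $\Theta\parallel\nu$ case of Corollary \ref{constantInDirectionNorm}, with $\nu=\Theta_{k}$ and $\psi=\phi_{\varepsilon}\circ\Re{Q_{k}}$ (resp.\ $\phi_{\varepsilon}\circ\Im{Q_{k}}$), regarded as a function on $\Theta_{k}^{\perp}$. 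The plan is to make this identification and then invoke the trivial bound $\sup_{\Theta_{k}^{\perp}}\abs{\psi}\le 1$.

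Concretely, I would first record the commutation identity. Writing $\eta=\tau\Theta_{k}+\xi$, we have $\widehat{f}(\tau\Theta_{k}+\xi)=\f{\Theta_{k}}{}\big(\ftpk{f}\big)(\tau,\xi)$, where $\f{\Theta_{k}}{}$ acts on the $t$-variable; since $\Psi_{k}(\tau\Theta_{k}+\xi)$ is independent of $\tau$, multiplication by it commutes with the one-dimensional transform in $\Theta_{k}$, and therefore
\[
  \ftpk{M_{\Psi_{k}}f}(t,\xi)=\phi_{\varepsilon}\big(\Re{Q_{k}(\xi)}\big)\,\ftpk{f}(t,\xi)
\]
(and similarly with $\Im{Q_{k}}$). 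This is precisely the identity $M_{\Psi}f=\finv{\Theta^{\perp}}{\{\psi(\xi_{\Theta^{\perp}})\f{\Theta^{\perp}}{f}\}}$ used in the proof of Corollary \ref{constantInDirectionNorm}. Taking the $\L1(dt)$-norm for fixed $\xi$, the scalar factor $\phi_{\varepsilon}(\Re{Q_{k}(\xi)})$ comes out, and since $\phi$ takes values in $[0,1]$ this yields $\norm{\ftpk{M_{\Psi_{k}}f}(\cdot,\xi)}_{\L1(dt)}\le\norm{\ftpk{f}(\cdot,\xi)}_{\L1(dt)}$; taking the $\L2(d\xi)$-norm of both sides then gives \eqref{eq:44}.

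There is no genuine obstacle here: the only points requiring a moment's care are the commutation identity --- that because $\Psi_{k}$ is constant in the $\Theta_{k}$ direction, $M_{\Psi_{k}}$ acts on the partial Fourier transform $\ftpk{f}$ as pointwise multiplication by $\phi_{\varepsilon}(\Re{Q_{k}})$ (resp.\ $\phi_{\varepsilon}(\Im{Q_{k}})$) in the $\xi$ variable alone --- and the elementary bound $0\le\phi\le 1$. Equivalently, one may bypass the computation entirely and simply apply the second ($\Theta\parallel\nu$) case of Corollary \ref{constantInDirectionNorm}.
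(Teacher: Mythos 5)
Your proof is correct and is essentially the paper's own argument: both rest on the observation that $Q_{k}$, hence $\Psi_{k}$, is independent of the dual variable $\tau$ in the $\Theta_{k}$ direction, so that $\ftpk{M_{\Psi_{k}}f}(t,\xi)=\Psi_{k}(\xi)\,\ftpk{f}(t,\xi)$, after which the bound follows from $\abs{\phi_{\varepsilon}}\le 1$ by taking the $\L1(dt)$ and then $\L2(d\xi)$ norms (equivalently, the $\Theta\parallel\nu$ case of Corollary \ref{constantInDirectionNorm}). No gaps.
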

\begin{proof}
   Recall that , writing  \(\eta= \tau\Theta_{k}+\xi\) with  \(\xi\in\Theta_{k}^{\perp}\),
  \begin{eqnarray*}
    Q_{k}(\eta)=Q_{k}(\tau\Theta_{k}+\xi)=Q_{k}(\xi)
  \end{eqnarray*}
so that  \(Q_{k}\), and therefore  \(\Psi_{k}\) does not
depend on  \(\tau\). Hence
\begin{eqnarray*}
\ftp{M_{\Psi_{k}}f} &=&
             \Psi_{k}(\xi)\ftp{f}(t,\xi)
\\
 ||\ftp{M_{\Psi_{k}}f}||_{\Theta_{k}}(1,2)&\le&
 ||\Psi_{k}(\xi)||_{L^{\infty}}\;||\ftp{f}||_{\Theta_{k}}(1,2)
\end{eqnarray*}
and (\ref{eq:44}) now follows on noting that
\(|\phi_{\varepsilon}|\le 1\). 
\end{proof}

According to Lemma \ref{multiplierNorm}, we may establish
  (\ref{eq:39}) for  \(j\ne k\) by proving that, 
  \(||\finv{\Theta_{k}}{\Psi_{j}}||_{\Theta_{k}(1,\infty)}\)
  is bounded. We address this in the next few lemmas.

\begin{lemma}\label{th:BasicEst}
  Let  \(q(t)\) be a real valued function of  \(t\in\R\),
  and let
  \begin{eqnarray}
  \nn
        \Phi(t)&=&\phi_{\varepsilon}(q(t))
\\
Z^{\varepsilon}_{q}&=&\{t\in\R \big|\ |q(t)|<\varepsilon\}
  \end{eqnarray}
Suppose that
\begin{eqnarray}
  \mu(Z^{\varepsilon}_{q}) \le \mu_{1}
\qquad
  \sup_{t\in
  Z^{\varepsilon}_{q}}\left|\frac{dq}{dt}\right|\le M_{1}
\qquad
  \sup_{t\in
  Z^{\varepsilon}_{q}}\left|\frac{d^{2}q}{dt^{2}}\right|\le M_{2}
\end{eqnarray}
  then
\begin{eqnarray}
  ||\widecheck{\Phi}||_{L^{1}} \le 2\mu_{1}
  \left[\frac{M_{1}}{\varepsilon}+\sqrt{\frac{M_{2}}{\varepsilon}}\right]
\end{eqnarray}
where  \(\widecheck{\Phi}\) denotes the (one dimensional)
inverse Fourier transform of  \(\Phi\). 
\end{lemma}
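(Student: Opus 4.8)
The plan is to estimate the $L^1$ norm of $\widecheck{\Phi}$ by exploiting that $\Phi = \phi_\varepsilon \circ q$ is smooth and \emph{supported in a thickened version of $Z^\varepsilon_q$} — indeed $\Phi$ is nonconstant (neither $0$ nor $1$) only where $\varepsilon \le |q(t)| \le 2\varepsilon$, so $\Phi' = \phi_\varepsilon'(q)\, q'$ and $\Phi'' = \phi_\varepsilon''(q)\,(q')^2 + \phi_\varepsilon'(q)\,q''$ are both supported in the set $\{\varepsilon \le |q(t)| \le 2\varepsilon\}$, which is contained in $Z^{2\varepsilon}_q$, a set of measure comparable to $\mu_1$. (One should first note, or arrange by the hypotheses, that $Z^{2\varepsilon}_q$ has measure at most $2\mu_1$ and that the derivative bounds $M_1, M_2$ hold there; this is the mild technical point that makes the constants work out, and in the paper's applications $q$ is a quadratic so $\mu(Z^{c\varepsilon}_q)$ scales predictably.) Since $\Phi - 1$ (or $\Phi$, whichever is compactly supported after subtracting the constant) is then a smooth compactly supported function, $\widecheck{\Phi}$ — minus a delta we can discard because it does not affect the relevant estimate, or rather we work with $\Phi$ itself noting the multiplier $M_\Psi$ only sees $\Phi$ up to additive constants in the later application — decays and we can bound $\|\widecheck{\Phi}\|_{L^1}$ via Cauchy–Schwarz and Plancherel applied to $\Phi$ and $\Phi''$.

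Concretely, first I would split $\int |\widecheck{\Phi}(\sigma)|\, d\sigma = \int_{|\sigma| \le R} + \int_{|\sigma| > R}$ for a parameter $R$ to be optimized. On the near piece, Cauchy–Schwarz gives $\int_{|\sigma|\le R} |\widecheck{\Phi}| \le \sqrt{2R}\, \|\widecheck{\Phi}\|_{L^2} = \sqrt{2R}\,\|\Phi\|_{L^2}$ by Plancherel, and $\|\Phi\|_{L^2}^2 \le \mu(\{\,\Phi \ne 0\,\}) \cdot \|\Phi\|_\infty^2 \lesssim \mu_1$ since $0 \le \Phi \le 1$ and $\Phi$ is supported (up to constants) in a set of measure $O(\mu_1)$. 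On the far piece, write $\sigma^2 \widecheck{\Phi}(\sigma) = -\widecheck{\Phi''}(\sigma)$, so $\int_{|\sigma|>R} |\widecheck{\Phi}| \le \big(\int_{|\sigma|>R} \sigma^{-4}\,d\sigma\big)^{1/2} \|\widecheck{\Phi''}\|_{L^2} \lesssim R^{-3/2} \|\Phi''\|_{L^2}$, and $\|\Phi''\|_{L^2}^2 \le \mu_1\,\|\Phi''\|_\infty^2$ with $\|\Phi''\|_\infty \lesssim M_1^2/\varepsilon^2 + M_2/\varepsilon$ from the chain-rule expression above (using $\|\phi_\varepsilon'\|_\infty \lesssim 1/\varepsilon$, $\|\phi_\varepsilon''\|_\infty \lesssim 1/\varepsilon^2$, and the sup bounds on $q', q''$ over the relevant set). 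Then optimize over $R$: balancing $\sqrt{R}\,\sqrt{\mu_1}$ against $R^{-3/2}\sqrt{\mu_1}\,(M_1^2/\varepsilon^2 + M_2/\varepsilon)$ yields $R \sim (M_1^2/\varepsilon^2 + M_2/\varepsilon)^{1/2}$ and a bound of order $\mu_1^{1/2} \cdot \mu_1^{1/2} \cdot \big(\tfrac{M_1}{\varepsilon} + \sqrt{\tfrac{M_2}{\varepsilon}}\big) = \mu_1\big(\tfrac{M_1}{\varepsilon} + \sqrt{\tfrac{M_2}{\varepsilon}}\big)$, matching the claimed inequality; tracking constants carefully should give the factor $2$.

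The main obstacle I anticipate is twofold. First, the bookkeeping of which set the derivatives $\Phi', \Phi''$ actually live on: the hypotheses control $q', q''$ and the measure only on $Z^\varepsilon_q$, but $\Phi'$ is supported where $|q| \in [\varepsilon, 2\varepsilon]$, slightly outside $Z^\varepsilon_q$, so one needs a remark (or a small strengthening, harmless in the quadratic applications the paper has in mind) that these bounds propagate to $Z^{2\varepsilon}_q$ with the measure merely doubling — this is presumably why the constant $2$ and not $1$ appears. Second, handling the additive constant: $\Phi$ is not compactly supported (it equals $1$ far away), so $\widecheck{\Phi}$ contains a $\delta$ and is not literally $L^1$; the honest fix is to observe that in the only place this lemma is used — bounding the multiplier norm $\|\finv{\Theta_k}{\Psi_j}\|_{\Theta_k(1,\infty)}$ via Lemma \ref{multiplierNorm} — one may replace $\Psi$ by $\Psi - c$ for a constant $c$ without changing $M_\Psi$ modulo a bounded operator, or more cleanly, that $\Phi$ here should be read as the compactly supported bump $\phi_\varepsilon$ precomposed appropriately so that the constant-at-infinity issue does not arise; I would state the lemma for the genuinely integrable piece and note the reduction. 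Everything else is routine Fourier analysis of the kind already deployed in the proof of Lemma \ref{multiplierNorm} and the mixed-norm estimates.
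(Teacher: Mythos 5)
Your high-level plan (bound the Fourier transform using the small support of $\Phi''$ for decay and the small support of $\Phi$ for the bulk, split at a radius $R$, optimize) is in the same spirit as the paper, and your two ``anticipated obstacles'' are real and correctly diagnosed: the hypotheses nominally control $q$ only on $Z^\varepsilon_q$ while $\Phi'$, $\Phi''$ live where $\varepsilon\le|q|\le2\varepsilon$, and $\Phi$ is constant (not zero) at infinity so one must really work with the compactly supported piece or absorb the constant into the identity operator, exactly as the paper does implicitly. However, the quantitative core of your argument does not prove the stated inequality. Cauchy--Schwarz plus Plancherel gives only \emph{one} factor of $\sqrt{\mu_1}$ in each piece: the near piece is $\lesssim\sqrt{R}\,\sqrt{\mu_1}$ and the far piece is $\lesssim R^{-3/2}\sqrt{\mu_1}\,K$ with $K=M_1^2/\varepsilon^2+M_2/\varepsilon$, so balancing at $R\sim K^{1/2}$ yields a bound of order $\sqrt{\mu_1}\,K^{1/4}\approx\sqrt{\mu_1}\left(\frac{M_1}{\varepsilon}+\sqrt{\frac{M_2}{\varepsilon}}\right)^{1/2}$, i.e.\ the \emph{square root} of the claimed right-hand side. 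Your assertion that the optimization produces $\mu_1^{1/2}\cdot\mu_1^{1/2}\cdot\bigl(\frac{M_1}{\varepsilon}+\sqrt{\frac{M_2}{\varepsilon}}\bigr)$ double-counts $\sqrt{\mu_1}$; there is no second factor available. Since $\mu_1\bigl(\frac{M_1}{\varepsilon}+\sqrt{\frac{M_2}{\varepsilon}}\bigr)$ is a dimensionless quantity that can be small, the bound you actually obtain neither implies the lemma nor can be repaired by ``tracking constants.''

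The paper avoids this by never passing through $L^2$: it bounds $\widecheck{\Phi}$ pointwise by $\frac{1}{\sqrt{2\pi}}\|\Phi\|_{L^1}\le\mu_1$ (support of measure $\lesssim\mu_1$ times $\|\Phi\|_\infty\le1$), and, after two integrations by parts, by $\tau^{-2}\|\Phi''\|_{L^1}\lesssim\tau^{-2}\mu_1\,K$; integrating $\min\bigl(\mu_1,\mu_1 K/\tau^2\bigr)$ in $\tau$ then gives $\lesssim\mu_1\sqrt{K}\le\mu_1\bigl(\frac{M_1}{\varepsilon}+\sqrt{\frac{M_2}{\varepsilon}}\bigr)$, linear in $\mu_1$ as required. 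If you keep your splitting scheme, the fix is simply to replace Cauchy--Schwarz/Plancherel by these $L^1\!\to\!L^\infty$ bounds on each piece. One mitigating remark: in the only place the lemma is used (via Corollary \ref{th:BasicEstCor} and Lemma \ref{th:qEst}), the quantity $\mu_1\bigl(\frac{M_1}{\varepsilon}+\sqrt{\frac{M_2}{\varepsilon}}\bigr)$ is shown to be uniformly bounded, so your weaker estimate $\sqrt{\mu_1}K^{1/4}$ would still yield a finite multiplier norm and hence could be made to serve the paper's ultimate purpose; but it is not a proof of Lemma \ref{th:BasicEst} as stated, and the claim that it ``matches the claimed inequality'' is incorrect.
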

\begin{proof}
  \begin{eqnarray*}
    \left|\widecheck{\Phi}(\tau)\right| &=& 
    \left| \frac{1}{\sqrt{2\pi}} \int
    e^{-it\tau}\phi_{\varepsilon}(q(t))dt\right|
\le\mu(Z^{\varepsilon}_{q})\le\mu_{1}
  \end{eqnarray*}
Two integrations by parts yield
\begin{eqnarray*}
  \left|\widecheck{\Phi}(\tau)\right| &=& \left|\frac{-1}{\tau^{2}\sqrt{2\pi}}\int
    e^{it\tau}\left[\phi_{\varepsilon}'\;q''+
                                   \phi_{\varepsilon}''\;(q')^{2}\right]dt\right|
\le\frac{1}{\tau^{2}}
\left[\frac{M_{2}}{\varepsilon}+\left(\frac{M_{1}}{\varepsilon}\right)^{2}\right] 
\end{eqnarray*}
so that
\[
  \left|\widecheck{\Phi}(\tau)\right| \le \mu_{1}
\begin{cases}
   1, &\tau \le
   \left[\frac{M_{2}}{\varepsilon}+\left(\frac{M_{1}}{\varepsilon}\right)^{2}\right]^{\frac{1}{2}}
\\
\frac{\left[\frac{M_{2}}{\varepsilon}+\left(\frac{M_{1}}{\varepsilon}\right)^{2}\right]}{\tau^{2}},
& \tau \ge \left[\frac{M_{2}}{\varepsilon}+\left(\frac{M_{1}}{\varepsilon}\right)^{2}\right]^{\frac{1}{2}}
  \end{cases}
\]
  which implies that
\[
  \int\left|\widecheck{\Phi}(\tau)\right|d\tau
  \le
      2\mu_{1}\left[\frac{M_{2}}{\varepsilon}+\left(\frac{M_{1}}{\varepsilon}\right)^{2}\right]^{\frac{1}{2}}
\le 2\mu_{1}\left[\left(\frac{M_{2}}{\varepsilon}\right)^{\frac{1}{2}}+\frac{M_{1}}{\varepsilon}\right]
\]
\end{proof}
  An immediate corollary is:

\begin{corollary}\label{th:BasicEstCor}
  Let  \(Q(t,\xi)\) be a real valued function of \(t\in\R\), \(\xi\in\R^n\),
  and let
  \begin{eqnarray}
  \nn
        \Phi(t\Theta+\xi)&=&\phi_{\varepsilon}(Q(t,\xi))
\\
Z^{\varepsilon}_{Q}(\xi)&=&\{t\in\R\big|\ |Q(t,\xi)|<\varepsilon\}
  \end{eqnarray}
Suppose that
\begin{eqnarray}
    \mu(Z^{\varepsilon}_{Q})\le \mu_{1}(\xi)
\qquad
  \sup_{t\in
  Z^{\varepsilon}_{Q}}\left|\frac{dQ}{dt}\right|\le M_{1}(\xi)
\qquad
  \sup_{t\in
  Z^{\varepsilon}_{Q}}\left|\frac{d^{2}Q}{dt^{2}}\right|\le M_{2}(\xi)
\end{eqnarray}
  then
\begin{equation}\label{eq:54}
  ||\finv{\Theta}{\Phi}||_{\Theta(1,\infty)} \le \sup_{\xi}\mu_{1}(\xi)
  \left[\frac{M_{1}(\xi)}{\varepsilon}+\sqrt{\frac{M_{2}(\xi)}{\varepsilon}}\right]
\end{equation}
\end{corollary}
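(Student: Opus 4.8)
The plan is to deduce Corollary \ref{th:BasicEstCor} directly from Lemma \ref{th:BasicEst} by treating the transverse variable $\xi$ as a parameter. First I would fix an arbitrary $\xi\in\Theta^{\perp}$ and apply Lemma \ref{th:BasicEst} to the one-variable function $q(t):=Q(t,\xi)$, with $\Phi_{\xi}(t):=\phi_{\varepsilon}(q(t))$; note that $\finv{\Theta}{\Phi}(t\Theta+\xi)$ is, by Definition \ref{fourierTransformDef}, exactly the one-dimensional inverse Fourier transform $\widecheck{\Phi_{\xi}}(t)$ of $\Phi_{\xi}$, because the Fourier transform along $\Theta$ sees $\xi$ only as a spectator. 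The hypotheses $\mu(Z^{\varepsilon}_{q})\le\mu_{1}(\xi)$, $\sup_{t\in Z^{\varepsilon}_{q}}|q'|\le M_{1}(\xi)$, $\sup_{t\in Z^{\varepsilon}_{q}}|q''|\le M_{2}(\xi)$ are precisely the parametrized hypotheses specialized at $\xi$, so Lemma \ref{th:BasicEst} gives
\[
  \norm{\finv{\Theta}{\Phi}(\cdot\,\Theta+\xi)}_{L^{1}(dt)}
  =\norm{\widecheck{\Phi_{\xi}}}_{L^{1}}
  \le 2\mu_{1}(\xi)\left[\frac{M_{1}(\xi)}{\varepsilon}+\sqrt{\frac{M_{2}(\xi)}{\varepsilon}}\right].
\]

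Then I would take the supremum over $\xi\in\Theta^{\perp}$ of both sides. By Definition \ref{mixedNormSpaceDef}, the left-hand supremum is by definition $\norm{\finv{\Theta}{\Phi}}_{\Theta(1,\infty)}$, and bounding the right-hand side by the supremum of the product gives \eqref{eq:54}. This is essentially a one-line argument once the identification of the partial Fourier transform with the fiberwise one-dimensional transform is made explicit.

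The only point requiring a word of care — and the closest thing to an obstacle — is the measurability/integrability bookkeeping: one should observe that $\Phi$ is locally integrable and smooth (it is a composition of the smooth bounded $\phi_{\varepsilon}$ with $Q$), so the fiberwise transforms $\widecheck{\Phi_{\xi}}$ are genuine functions and the $\Theta(1,\infty)$ norm is the honest quantity $\sup_{\xi}\int|\finv{\Theta}{\Phi}(t\Theta+\xi)|\,dt$; and the crude bound $\sup_{\xi}\big(\mu_{1}(\xi)[\tfrac{M_{1}(\xi)}{\varepsilon}+\sqrt{\tfrac{M_{2}(\xi)}{\varepsilon}}]\big)$ on the right is what the statement asks for, so no uniformity in $\xi$ beyond what the hypotheses already supply is needed. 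In short, Corollary \ref{th:BasicEstCor} is nothing more than Lemma \ref{th:BasicEst} applied slicewise and then supremized, and I would present it exactly that way.
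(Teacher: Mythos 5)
Your proof is correct and is exactly the argument the paper intends: the corollary is presented there as an immediate consequence of Lemma \ref{th:BasicEst}, applied fiberwise in $\xi$ (the partial transform $\finv{\Theta}{\Phi}$ being the one-dimensional inverse transform with $\xi$ as a spectator) and then supremized, which is precisely what you do. The only caveat is that the slicewise application of the lemma carries its factor $2$, so your argument yields \eqref{eq:54} with an extra factor $2$ on the right; the paper's statement of the corollary silently drops that harmless constant.
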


Finally, we specialize to  \(\Theta=\Theta_{j}\) and
estimate the quantities on the right hand side of  (\ref{eq:54}).

\begin{lemma}\label{th:qEst}
  Let  \(Q(t,\xi) = \Re{Q_{k}(t\Theta_{j}+\xi)}\) or  \(Q(t,\xi) =
    \Im{Q_{k}(t\Theta_{j}+\xi)}\), with
\begin{eqnarray}
  \mu(Z^{\varepsilon}_{Q})=:\mu_{1}(\xi)
\qquad
  \sup_{t\in
  Z^{\varepsilon}_{Q}}\left|\frac{dQ}{dt}\right|=:M_{1}(\xi)
\qquad
  \sup_{t\in
  Z^{\varepsilon}_{Q}}\left|\frac{d^{2}Q}{dt^{2}}\right|=:M_{2}(\xi)
\end{eqnarray}
then
\begin{eqnarray}
  \label{eq:43}
  \sup_{\xi}\mu_{1}(\xi)
\left[\frac{M_{1}(\xi)}{\varepsilon}+\sqrt{\frac{M_{2}(\xi)}{\varepsilon}}\right]\le 9\sqrt{2}
\end{eqnarray}
\end{lemma}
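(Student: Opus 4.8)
The plan is to reduce the estimate to a one–variable problem and then apply Lemma~\ref{th:simplequad} to control each of the three quantities $\mu_1(\xi)$, $M_1(\xi)$, $M_2(\xi)$ uniformly in $\xi$. First I would write out $Q_k$ restricted to the line $t\mapsto t\Theta_j+\xi$. Recall
\[
  Q_k(\eta) = \sum_{m\ne k}\epsilon_m(i\eta_m-\beta_m)^2 + b,
\]
and when we substitute $\eta = t\Theta_j + \xi$ (so that $\eta_j = t$ and the other coordinates are fixed by $\xi$), exactly one term of the sum, namely the $m=j$ term, depends on $t$ — but only if $j\ne k$ (the case $j=k$ is handled separately in Lemma~\ref{th:jequalsk}, so here $j\ne k$). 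Expanding $\epsilon_j(i t-\beta_j)^2 = \epsilon_j(\beta_j^2 - t^2) - 2i\epsilon_j\beta_j t$, we find
\[
  Q_k(t\Theta_j+\xi) = -\epsilon_j t^2 - 2i\epsilon_j\beta_j t + c(\xi),
\]
where $c(\xi)$ collects all the $t$-independent terms. Taking real and imaginary parts, $\Re Q_k = -\epsilon_j t^2 + \Re c(\xi)$ and $\Im Q_k = -2\epsilon_j\beta_j t + \Im c(\xi)$. So the real part is (up to sign $\epsilon_j=\pm1$) a shifted version of $q(t)=t^2-B$ with $B = \epsilon_j\Re c(\xi)$, and the imaginary part is affine in $t$.

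**Next I would estimate the three quantities in each of the two cases.** For $Q=\Re Q_k$: the set $Z^\varepsilon_Q$ is $\{t : |t^2 - B|<\varepsilon\}$ (after absorbing the sign), so Lemma~\ref{th:simplequad} gives $\mu_1(\xi) = \mu(Z^\varepsilon_Q)\le 4\min(\sqrt\varepsilon,\varepsilon/\sqrt B)$. The derivatives are $|dQ/dt| = 2|t|$ and $|d^2Q/dt^2| = 2$, so $M_2(\xi)\le 2$ everywhere, and on $Z^\varepsilon_Q$ we have $t^2 < B+\varepsilon \le B+\varepsilon$, hence $M_1(\xi)\le 2\sqrt{B+\varepsilon}$. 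The point is the product $\mu_1 M_1$: using $\mu_1\le 4\varepsilon/\sqrt B$ (valid when $B\ge\varepsilon$; when $B<\varepsilon$ use $\mu_1\le 4\sqrt\varepsilon$ and $M_1\le 2\sqrt{2\varepsilon}$) we get $\mu_1(\xi)M_1(\xi)/\varepsilon \le 4\varepsilon\cdot 2\sqrt{B+\varepsilon}/(\varepsilon\sqrt B)\le 8\sqrt 2$, and $\mu_1(\xi)\sqrt{M_2(\xi)/\varepsilon}\le 4\sqrt\varepsilon\cdot\sqrt{2/\varepsilon} = 4\sqrt 2$, so the bracketed sum is at most $8\sqrt2 + 4\sqrt 2 = \dots$; I would tighten the constants to land at $9\sqrt 2$. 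For $Q=\Im Q_k$: now $Q$ is affine in $t$, so $q'$ is the constant $2|\epsilon_j\beta_j| = 2|\beta_j|$ and $q''\equiv 0$, hence $M_2=0$ and the $\sqrt{M_2/\varepsilon}$ term drops. The level set $Z^\varepsilon_Q = \{t : |2\beta_j t + \Im c(\xi)|<\varepsilon\}$ is an interval of length $\mu_1(\xi) = \varepsilon/|\beta_j|$ (when $\beta_j\ne 0$; if $\beta_j=0$ then $\Im Q_k$ is constant in $t$ and the multiplier is again handled as in Lemma~\ref{th:jequalsk}), so $\mu_1(\xi)M_1(\xi)/\varepsilon = (\varepsilon/|\beta_j|)\cdot 2|\beta_j|/\varepsilon = 2$, comfortably under $9\sqrt2$.

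**The main obstacle** is bookkeeping the constants carefully enough to get the clean bound $9\sqrt 2$ — in particular making the two cases ($B$ small versus $B$ large in the real-part estimate) match up, and keeping track of the factors of $2$ and $\sqrt 2$ coming from Lemma~\ref{th:simplequad} and from the two integrations by parts in Lemma~\ref{th:BasicEst}. The conceptual content is just that $Q_k$, as a function of the single variable $t=\eta_j$ along the $\Theta_j$ direction, is a quadratic with a uniformly bounded leading coefficient and second derivative, so the scale-free quantities $\mu_1 M_1/\varepsilon$ and $\mu_1\sqrt{M_2/\varepsilon}$ are automatically $O(1)$ independent of $\xi$ and of $\varepsilon$. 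A minor point to be careful about: the supremum over $\xi$ must genuinely be finite and not blow up as $B\to 0$ or $B\to\infty$ — but the $\min$ in Lemma~\ref{th:simplequad} is exactly designed to handle the $B\to\infty$ regime, and the $B\to 0$ regime is controlled by the $\sqrt\varepsilon$ branch, so both ends are uniform.
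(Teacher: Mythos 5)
Your argument is essentially the paper's own proof: restrict \(Q_{k}\) to the line \(t\mapsto t\Theta_{j}+\xi\) (with \(j\ne k\)), observe that its real part is \(\pm\big(t^{2}-B(\xi)\big)\) and its imaginary part is affine in \(t\), and then combine Lemma \ref{th:simplequad} with \(M_{1}\le 2\sqrt{B+\varepsilon}\), \(M_{2}=2\) (resp.\ \(M_{1}=2|\beta_{j}|\), \(M_{2}=0\)) exactly as the paper does, including your correct observation that the degenerate subcases (\(j=k\), or \(\beta_{j}=0\) for the imaginary part, where \(Q\) is constant in \(t\)) are handled by the pointwise-multiplier argument of Lemma \ref{th:jequalsk}. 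The only discrepancy is the final constant bookkeeping (\(12\sqrt2\) instead of the stated \(9\sqrt2\)), which is immaterial to how the lemma is used and can indeed be tightened as you anticipate: each of the two intervals making up \(Z^{\varepsilon}_{Q}\) has length at most \(\min\big(\sqrt{2\varepsilon},\,2\varepsilon/\sqrt{B+\varepsilon}\big)\), so \(\mu_{1}M_{1}/\varepsilon+\mu_{1}\sqrt{M_{2}/\varepsilon}\le 8+4=12\le 9\sqrt2\).
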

\begin{proof}

    We write \(\eta = \sigma\Theta_{k}+t\Theta_{j}+ \xi\)
    where \(\xi\) is orthogonal to both \(\Theta_{k}\) and
    \(\Theta_{j}\).  Recall from (\ref{eq:20}) that
    \(Q_{k}\) does not depend on \(\sigma\), and assume
    for convenience that  \(\epsilon_{j}=+1\); First let
\begin{equation}\nn
  Q = \Re{Q_{k}(t,\xi,\sigma)} = t^{2} - C(\xi)
\end{equation}
  where
\[
  C(\xi) =
  \sum_{l\ne j,k}\epsilon_{l}\left(\xi_{l}^{2}-\beta_{l}^{2}\right)-b+\beta_{j}^{2}
\]
so that we may conclude from Lemma \ref{th:simplequad}
that
\begin{align}\nn
  \mu_{1}(\xi)\le 4
  \min\left(\sqrt{\varepsilon},\frac{\varepsilon}{\sqrt{|C(\xi)|}}\right)
  \\ \notag
  \left|\frac{dQ}{dt}\right| = 2|t|\le 2\sqrt{C(\xi)+\varepsilon}
\end{align}
  and
\[
  \left|\frac{d^{2}Q}{dt^{2}}\right| = 2
\]
  so that
\[
  \mu_{1}\frac{M_{1}}{\varepsilon}\le
  8\min\left(\sqrt{1+\frac{C(\xi)}{\varepsilon}},
  \sqrt{1+\frac{\varepsilon}{C(\xi)}}\right)
  \le 8\sqrt{2}
\]
  and
\[
  \mu_{1}\sqrt{\frac{M_{2}}{\varepsilon}}\le\sqrt{2}
\]
We next treat the case  \(Q= \Im{Q_{k}} =
2\beta_{j}t+2\sum_{l\ne j,k}\epsilon_l\beta_{l}\xi_{l}\). In this case

\begin{align*}
  \mu_{1}=\frac{\varepsilon}{2|\beta_{j}|}
  \\
  \frac{dQ}{dt} = 2\beta_{j}
\end{align*}
  and
\[
  \frac{d^{2}Q}{dt^{2}} = 0
\]
  so that
\[
  \mu_{1}\frac{M_{1}}{\varepsilon}= 1
\]
  and
\[
  \mu_{1}\sqrt{\frac{M_{2}}{\varepsilon}} = 0
\]
\end{proof}

The combination of Lemma \ref{th:BasicEst}, Corollary
\ref{th:BasicEstCor}, and Lemma \ref{th:qEst} gives us the
hypothesis necessary to invoke Lemma \ref{multiplierNorm}
and conclude that 
\begin{corollary}
  For  \(j\ne k\),
  \(||\finv{\Theta_{k}}{\Psi_{j}}||_{\Theta_{k}(1,\infty)}\le
  18\)
\end{corollary}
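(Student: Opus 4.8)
The plan is to obtain the bound directly from Corollary~\ref{th:BasicEstCor} together with Lemma~\ref{th:qEst}, once the hypotheses of the former are matched to $\Theta=\Theta_k$ and $\Phi=\Psi_j$. Recall that $\Psi_j$ is one of $\phi_\varepsilon(\Re Q_j)$ or $\phi_\varepsilon(\Im Q_j)$ and, by \eqref{eq:20}, that $Q_j(\eta)$ does not depend on the coordinate $\eta_j$, although it does depend on $\eta_k$ whenever $k\neq j$. So I would write $\eta=t\Theta_k+\xi$ with $\xi\in\Theta_k^\perp$, so that $t=\eta_k$, and set $Q(t,\xi):=\Re Q_j(t\Theta_k+\xi)$ (respectively $\Im Q_j(t\Theta_k+\xi)$). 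This $Q$ is real valued and $\Psi_j=\phi_\varepsilon(Q(t,\xi))$, which is exactly the shape of multiplier handled by Corollary~\ref{th:BasicEstCor}.

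It then remains to bound $\sup_\xi \mu_1(\xi)\big[M_1(\xi)/\varepsilon + \sqrt{M_2(\xi)/\varepsilon}\,\big]$ for this $Q$, where $\mu_1$, $M_1$, $M_2$ are the width of the set $\{t : |Q(t,\xi)|<\varepsilon\}$ and the suprema of the first and second $t$-derivatives of $Q$ over that set, as in \eqref{eq:54}. This is the content of Lemma~\ref{th:qEst}, provided one interchanges the labels $j$ and $k$: that lemma and its proof are symmetric under $\Theta_j\leftrightarrow\Theta_k$, since the computation there only uses that $Q_j$ restricts to a quadratic (in the $\Re$ case, $Q=t^2-C(\xi)$, handled by Lemma~\ref{th:simplequad}) or affine (in the $\Im$ case) function of the distinguished Fourier variable, with the dropped coordinate direction playing no role. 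Hence the supremum is at most $9\sqrt2$, and Corollary~\ref{th:BasicEstCor} yields $\norm{\finv{\Theta_k}{\Psi_j}}_{\Theta_k(1,\infty)}\le 9\sqrt2 < 18$.

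The argument is essentially bookkeeping layered on top of the lemmas already proved, so the hard part --- really the only place slippage could occur --- will be double-checking that the normal-form computation in the proof of Lemma~\ref{th:qEst} goes through verbatim when the Fourier direction is $\Theta_k$ and the omitted direction is $\Theta_j$ rather than the other way around; the quadratic form $Q_j$ has the same structure as $Q_k$, so I expect no genuine difficulty. Finally, feeding $\norm{\finv{\Theta_k}{\Psi_j}}_{\Theta_k(1,\infty)}\le 18$ into Lemma~\ref{multiplierNorm} produces the multiplier bound that \eqref{eq:39} requires for $j\neq k$.
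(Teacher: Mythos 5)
Your argument is exactly the paper's: it invokes Corollary \ref{th:BasicEstCor} together with Lemma \ref{th:qEst} (with the roles of $j$ and $k$ interchanged, which is legitimate since $Q_j$ restricted to the $\Theta_k$ line has the same quadratic/affine structure used there) and then Lemma \ref{multiplierNorm}, which is precisely how the paper deduces the corollary. The bookkeeping on the index swap and on $\Psi_j$ denoting $\phi_{\varepsilon}(\Re Q_j)$ or $\phi_{\varepsilon}(\Im Q_j)$ is correct, so there is nothing to add.
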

\noindent and consequently that (\ref{eq:39}) holds with
\(C = 19^{n+1}\) -- because
we use products (with  \(n+1\) factors) of these multipliers and identity minus
these multipliers for our cutoffs. 

We can now finish the
\begin{proof}[Proof of Theorem \ref{th:main}]
  We have shown that  multiplication by \(\phi_{\varepsilon}(\Re{Q_{k}})\) and
\(\phi_{\varepsilon}(\Im{Q_{k}})\) preserve bounds on  \(||\f{\Theta_k^\perp}{f}||_{\Theta_k(1,2)}\). 
Hence let us start with
\begin{align*}
  u &= \sum_{k} u_{k}
  \\
  ||u||_{L^2(D_{r})}&\le \sum_{k}||u_{k}||_{L^2(D_{r})}
  \\
  &\le \sum_{k}||u_{k}||_{L^2(S_{1})}
\end{align*}
  where  \(S_{1}\) is a strip bounded by the two planes
   \(\Theta_{k}\cdot x = s_{1}\) and \(\Theta_{k}\cdot x =
   s_{2}\), with \(|s_{2}-s_{1}|\le d_{r}\)
\begin{align*}
  &= \sum_{k}||\ftpk{u_{k}}||_{\Theta_{k}(2,2)(S_{1})}
  \\
  &\le\sum_{k}d_{r}^{\frac{1}{2}}||\ftpk{u_{k}}||_{\Theta_k(\infty,2)(S_{1})}
  \\
  &\le C_{1}(P,n)\ d_{r}^{\frac{1}{2}}\sum_{k}||\ftpk{f_{k}}||_{\Theta_k(1,2)}
  \\
  &\le  C_{2}(P,n) \ d_{r}^{\frac{1}{2}}\sum_{k}||\ftpk{f}||_{{\Theta_k(1,2)}}
  \\
  &\le  C_{3}(P,n)\ d_{r}^{\frac{1}{2}}\sum_{k}||\chi_{S_2}\ftpk{f}||_{\Theta_k(1,2)}
\end{align*}
where the \(C_{i}\) are constants depending only on \(P\)
and the dimension \(n\), and \(S_{2}\) is a strip
containing \(D_{s}\) defined analogously to \(S_{1}\).
\begin{align*}
  &\le C_{3}(P,n)\ d_{r}^{\frac{1}{2}}\sum_{k}d_{s}^{\frac{1}{2}}||\chi_{S_2}\ftpk{f}||_{{\Theta_k(2,2)}}
  \\
  &\le C_{4}(P,n)\  (d_{r}d_{s})^{\frac{1}{2}}||f||_{L^2}
\end{align*}
and Theorem \ref{th:main} is proved.
\end{proof}

\section{Estimates for higher order operators}\label{sec:ordern}

In this section we will consider an
$N$\textsuperscript{th} order constant coefficient partial
differential operator $P(D)$ on $\R^n$, $D = -i\nabla$. We
refer to polynomials which satisfy the three conditions
of Definition \ref{admissibleSymbol} as \textit{admissible}. For these admissible
polynomials, we will prove the same estimate as we did for
second order operators in Theorem \ref{th:main}. We will again use
partial Fourier transforms and solve ordinary differential
equations, using partitions of unity to decompose our
source into a sum of sources, each of which has support
suited to that particular direction, so that the solution
to the ODE satisfies the same estimates as in the previous
section.\\

The main difference here is that we don't have a simple
normal form as we did in Lemma \ref{th:1}, so we cannot
explicitly choose directions and construct cutoffs. We
need to rely on algebraic properties of the discriminant
to guarantee that we can find a finite decomposition of
the source analogous to the one we used in
\eqref{eq:85}. Additionally, the order of the ODE can
depend on the direction. In the second order case we
dismissed these cases easily in propositions \ref{prop4.3}
and \ref{prop4.4} because we could represent them
explicitly. In the higher
order case, we choose our directions to avoid these cases.\\

\begin{theorem}\label{th:other}
  Let $P:\C^n\to\C$ be a degree $N \geqslant 1$ admissible polynomial. Then 
  there is a constant $C = C(P,n)$ such that for every bounded domain 
  $D_s \subset \R^n$ and every $f\in L^2(D_s)$ there is a $u\in 
  L^2_{loc}(\R^n)$ satisfying
\begin{equation}
  P(D) u = f\label{eq:95}
\end{equation}
  Moreover for any bounded domain $D_r \subset \R^n$
\begin{equation}
  \norm{u}_{L^2(D_r)} \le C \sqrt{d_r d_s} \norm{f}_{L^2(D_s)}
\end{equation}
  where $d_\ell$ is the diameter of $D_\ell$.
\end{theorem}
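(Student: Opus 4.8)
The plan is to follow the architecture of the proof of Theorem~\ref{th:main}: fix finitely many unit vectors $\Theta_1,\dots,\Theta_m$, use a frequency partition of unity to split $f$ into pieces $f_1,\dots,f_m,f_{m+1}$ adapted to these directions, solve $P(D)u_k=f_k$ in each case, and reassemble via Lemma~\ref{mixedToUniformNorms}. The role played in Section~\ref{sec:order2} by $Q_k$ --- the discriminant in $\tau$ of the second-order symbol split along $\Theta_k$ --- is now played by the genuine discriminant. Choosing each $\Theta_k$ with $P_N(\Theta_k)\neq0$ (possible since $P_N\not\equiv0$) and writing $\eta=\tau\Theta_k+\xi$, $\xi\in\Theta_k^{\perp}$, the symbol $p(\tau;\xi)$ is a degree-$N$ polynomial in $\tau$ with a fixed nonzero leading coefficient; put $\delta_k(\xi):=\disc{\tau}{p(\cdot\,;\xi)}$, a nonzero polynomial in $\xi$ whose zeros are exactly the $\xi$ at which $p(\cdot\,;\xi)$ has a repeated root. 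I would select the $\Theta_k$, invoking the admissibility of $P$, so that $\bigcap_{k=1}^{m} Z^{\varepsilon}_{\delta_k}$ (defined as $Z^{\varepsilon}_{Q_k}$ in Lemma~\ref{th:zqeps}, with $\delta_k$ in place of $Q_k$) is bounded for small $\varepsilon$, and --- shrinking $\varepsilon$ and using the absence of complex multiple characteristics --- so that $Z^{\varepsilon}_{P}\cap\bigcap_{k} Z^{\varepsilon}_{\delta_k}=\emptyset$, the exact analogue of \eqref{eq:36}. With cutoffs $\Phi_k=\phi_{\varepsilon}(\Re\delta_k)+\phi_{\varepsilon}(\Im\delta_k)-\phi_{\varepsilon}(\Re\delta_k)\phi_{\varepsilon}(\Im\delta_k)$ as in \eqref{eq:32}, decompose $f$ telescopically as in \eqref{eq:33}, so that $\supp\widehat{f_k}$ avoids a neighbourhood of $\{\delta_k=0\}$ for $k\le m$, while $\supp\widehat{f_{m+1}}$ lies in a bounded subset of $\bigcap_k Z^{2\varepsilon}_{\delta_k}$ disjoint from $Z^{\varepsilon}_{P}$.

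For $k\le m$, solve $P(D)u_k=f_k$ by taking the partial Fourier transform in $\Theta_k^{\perp}$: this is an ODE of order $N$ in $t$ whose characteristic roots are the roots $\tau_1(\xi),\dots,\tau_N(\xi)$ of $p(\cdot\,;\xi)$ ($\tau$ dual to $t=x\cdot\Theta_k$). Since $\delta_k(\xi)\neq0$ on $\supp\widehat{f_k}$ these are simple, so the partial-fraction identity $1/p(\tau;\xi)=\sum_{j}\big(\partial_\tau p(\tau_j;\xi)\big)^{-1}(\tau-\tau_j)^{-1}$ writes the solution as a sum of $N$ one-sided exponential convolutions; for each $j$ one integrates over the half-line selected by the sign of $\Im\tau_j(\xi)$ (either one if it vanishes), exactly as in \eqref{eq:9}--\eqref{eq:10} and \eqref{eq:60}--\eqref{eq:61}, so the exponential kernel has modulus $\le1$ on its domain. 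Hence, with $w=\ftpk{u_k}$ and $g=\ftpk{f_k}$, $\abs{w(t,\xi)}\le\big(\sum_{j}\abs{\partial_\tau p(\tau_j(\xi);\xi)}^{-1}\big)\norm{g(\cdot,\xi)}_{L^1(dt)}$ pointwise in $\xi$. Since $\prod_j\abs{\partial_\tau p(\tau_j;\xi)}$ is a fixed power of the leading coefficient times $\abs{\delta_k(\xi)}$, bounded below on $\supp\widehat{f_k}$ --- and, with the control of the size and separation of the $\tau_j$ afforded by admissibility --- the prefactor is bounded uniformly in $\xi$; taking $L^2(d\xi)$ norms gives $\norm{\ftpk{u_k}}_{\Theta_k(\infty,2)}\le C\norm{\ftpk{f_k}}_{\Theta_k(1,2)}$. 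For the last piece, $u_{m+1}:=\mathscr{F}^{-1}(\widehat{f_{m+1}}/P)$ solves $P(D)u_{m+1}=f_{m+1}$, and since $\supp\widehat{f_{m+1}}$ is bounded and $\abs{P}>\varepsilon$ there, Proposition~\ref{th:unplus1} yields $\norm{\ftp{u_{m+1}}}_{\Theta(\infty,2)}\le C\norm{\ftp{f_{m+1}}}_{\Theta(1,2)}$ for every unit vector $\Theta$.

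It remains to replace $f_k$ by $f$ on the right, i.e.\ to show $\norm{\ftpk{f_k}}_{\Theta_k(1,2)}\le C\norm{\ftpk{f}}_{\Theta_k(1,2)}$; by \eqref{eq:33} and Lemma~\ref{multiplierNorm} this reduces to bounding $\norm{\finv{\Theta_k}{\Psi_j}}_{\Theta_k(1,\infty)}$ for $\Psi_j=\phi_{\varepsilon}(\Re\delta_j)$ or $\phi_{\varepsilon}(\Im\delta_j)$. For $j=k$ this is immediate, since $\delta_k$ is independent of $\tau$ (cf.\ Lemma~\ref{th:jequalsk}). For $j\neq k$ one meets the main structural difference from Section~\ref{sec:order2}: $\delta_j$ need not be constant in the $\Theta_k$-direction, and the $\Theta_k$ need not be mutually orthogonal, so Corollary~\ref{constantInDirectionNorm} does not apply. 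Instead I would estimate $\norm{\finv{\Theta_k}{\Psi_j}}_{\Theta_k(1,\infty)}$ directly via Corollary~\ref{th:BasicEstCor}, which needs bounds, uniform in $\xi\in\Theta_k^{\perp}$, on the measure of $\{t:\abs{\delta_j(t\Theta_k+\xi)}<\varepsilon\}$ and on the first two $t$-derivatives of $\delta_j$ there. As $t\mapsto\delta_j(t\Theta_k+\xi)$ is a nonzero polynomial of bounded degree (by admissibility this restriction never degenerates), these follow from standard sublevel-set estimates for one-variable polynomials --- the higher-degree analogues of Lemmas~\ref{th:simplequad} and \ref{th:qEst} --- and the product $\mu_1(\xi)\big(M_1(\xi)/\varepsilon+\sqrt{M_2(\xi)/\varepsilon}\big)$ is bounded by $C(P,n)$. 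As our cutoffs are products of finitely many factors $\Psi_j$ and $1-\Psi_j$, this gives $\norm{\ftpk{f_k}}_{\Theta_k(1,2)}\le C\norm{\ftpk{f}}_{\Theta_k(1,2)}$.

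Finally, apply Lemma~\ref{mixedToUniformNorms} as at the end of the proof of Theorem~\ref{th:main}: $\norm{u_k}_{L^2(D_r)}\le\sqrt{d_r}\,\norm{\ftpk{u_k}}_{\Theta_k(\infty,2)}$ and $\norm{\ftpk{f}}_{\Theta_k(1,2)}\le\sqrt{d_s}\,\norm{f}_{L^2(D_s)}$ (and likewise for $u_{m+1}$, with any fixed direction); summing over $k=1,\dots,m+1$ with $u=\sum_k u_k$ gives $P(D)u=f$ and $\norm{u}_{L^2(D_r)}\le C(P,n)\sqrt{d_r d_s}\,\norm{f}_{L^2(D_s)}$. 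I expect the main obstacle to be the first step --- producing the finite family $\Theta_1,\dots,\Theta_m$ for which the discriminants $\delta_k$ are nondegenerate, $\bigcap_k Z^{\varepsilon}_{\delta_k}$ is bounded and disjoint from $Z^{\varepsilon}_{P}$, and the one-variable sublevel-set bounds of the third paragraph hold uniformly. This is a statement about the discriminant hypersurface of $P$ and its behaviour at infinity, and is precisely what the admissibility hypothesis (Definition~\ref{admissibleSymbol}) must encode; once it is in hand, the rest is bookkeeping of the kind already done in Section~\ref{sec:order2}.
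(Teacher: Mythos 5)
Your skeleton (directional splitting, partial fractions for the ODE along each $\Theta_k$, reassembly by Lemma~\ref{mixedToUniformNorms}) matches the paper, but three load-bearing steps are asserted rather than proved, and two of them would fail as stated. First, your cutoffs are sublevel-set cutoffs of the discriminant $\delta_k$, and you then claim the prefactor $\sum_j\abs{\partial_\tau p(\tau_j;\xi)}^{-1}$ is uniformly bounded because $\prod_j\abs{p'(\tau_j)}$ is comparable to $\abs{\delta_k(\xi)}\ge\varepsilon$ plus ``control of the size and separation of the $\tau_j$ afforded by admissibility.'' A lower bound on the product does not bound each factor from below: as $\abs{\xi}\to\infty$ one factor can degenerate while the others blow up, and Definition~\ref{admissibleSymbol} says nothing about root separation. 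The paper avoids this by defining the bad sets $\mathscr{B}_{\Theta,\varepsilon}$ directly through $\min_j\abs{p'(\tau_j)}\le\varepsilon$, cutting off on a metric neighbourhood of $\mathscr{D}_{\Theta_k}=\mathscr{B}_{\Theta_k,0}$, and invoking Condition~\ref{twoCylindersCompatibility} to convert ``distance $>r_0$ from $\mathscr{D}_{\Theta_k}$'' into ``$\abs{p'(\tau_j)}>\varepsilon$ for every root'' (Proposition~\ref{solExistence} then gives the clean $N/\varepsilon$ bound). Your condition ``$\abs{\delta_k(\xi)}\ge\varepsilon$'' does not imply a uniform distance to $\{\delta_k=0\}$, so even Condition~\ref{twoCylindersCompatibility} cannot be applied to your decomposition as written. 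Second, your multiplier bounds rest on uniform-in-$\xi$ sublevel-set and derivative estimates for $t\mapsto\delta_j(t\Theta_k+\xi)$, claimed to be ``standard.'' They are not: this is exactly the higher-order analogue of Corollary~\ref{th:BasicEstCor}/Lemma~\ref{th:qEst} which the authors state they could not prove, and which is the sole reason Condition~\ref{compactDirections} exists. The paper instead takes cutoffs $\psi_k$ constant in the $\Theta_k$ direction, uses compactness of $\mathscr{D}_{\Theta_k}\cap\Theta_k^\perp$ for two directions to make $1-\psi_1,1-\psi_2$ compactly supported (so Corollary~\ref{constantInDirectionNorm} applies even for non-parallel, non-orthogonal $\Theta$), and observes that all subsequent $\Psi_k$ are then compactly supported Schwartz multipliers handled by Lemma~\ref{multiplierNorm}.

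Third, the step you yourself flag as the main obstacle --- producing finitely many directions for which the bad sets have the required intersection properties --- is the actual core of the paper's proof and cannot be discharged by ``invoking admissibility'': admissibility does not directly supply such a family. The paper proves it in two stages: Lemma~\ref{goodDirection} uses nonsingularity of $P$ over $\C^n$ to show that for every $\xi$ some direction has $\Delta(\Theta,\xi)\neq0$, and Proposition~\ref{algProp} upgrades this to a \emph{finite} family with $\bigcap_k\overline{\mathscr{D}_{\Theta_k}}=\emptyset$ via Hilbert's basis theorem applied to a dense sequence of directions; Proposition~\ref{epsNeighborhood} then uses Condition~\ref{compactDirections} to pass to $2r_0$-neighbourhoods. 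Because the intersection of these neighbourhoods is empty, $\sum_k\Psi_k\equiv1$ exactly and there is no leftover piece: your $f_{m+1}$, together with the claimed boundedness of $\bigcap_kZ^{2\varepsilon}_{\delta_k}$ and its disjointness from $Z^{\varepsilon}_P$ (the analogue of \eqref{eq:36}), has no supporting argument for general $N$ --- those facts were obtained in Section~\ref{sec:order2} by explicit computation in the second-order normal form, which is unavailable here. So the proposal reproduces the parts of the argument that generalize routinely, but the three genuinely new ingredients of the higher-order case (the per-root lower bound via $\mathscr{B}_{\Theta,\varepsilon}$ and Condition~\ref{twoCylindersCompatibility}, the compactness-based multiplier bounds via Condition~\ref{compactDirections}, and the Hilbert-basis-theorem selection of directions) are missing or replaced by unsupported claims.
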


\bigskip We will prove Theorem \ref{th:other} by reducing
the solution of the equation \eqref{eq:95} to solving a set of
parameterized ODE's, just as we wrote the solution to
\eqref{eq:13} in terms of solutions to
\eqref{modelProblems}. To accomplish this, we must choose
a set of directions $\Theta_k$ and build a partition of
unity on the Fourier side so that the denominators of the
source terms in each of these model problems are
strictly positive, just as was explained after
\eqref{eq:61}. These two ingredients will then imply the
final estimate.\\

  We choose a direction $\Theta\in\mathbb{S}^{n-1}(\R^n)$
  and Fourier transform \eqref{eq:95} along the $\Theta^\perp$ hyperplane to obtain
  the \emph{ordinary differential equation}
\[
  P(-i(\Theta\cdot\nabla)\Theta + \xi_{\Theta^\perp}) 
  \f{\Theta^\perp}{u} = \f{\Theta^\perp}{f}
\]
  in the direction $\Theta$, which we  solve for each 
  $\xi_{\Theta^\perp}$. The next lemma gives the estimate
  we seek in the case that ODE is first order.

 \begin{lemma}\label{PDEmodel}
   Let $\Theta\in\mathbb{S}^{n-1}$ and let $q:\Theta^\perp \to \C$ be 
   measurable. Assume that $g \in \Theta(1,2)$, or that $g \in 
   \Theta(\infty,2)$ and $\inf_{\Theta^\perp} \abs{\Im q} > 0$. Then 
   there is $w\in\Theta(\infty,2)$ satisfying $(-i\partial_t - 
   q(\xi_{\Theta^\perp})) w = g$ and
   \begin{equation}\label{eq:45}
   \norm{ w }_{\Theta(\infty,2)} \le \norm{ g}_{\Theta(1,2)},
    \end{equation}
   or in the second case
   \begin{equation}\label{eq:46}
   \norm{ w }_{\Theta(\infty,2)} \le \frac{1}{\inf_{\Theta^\perp} 
   \abs{\Im q}} \norm{ g }_{\Theta(\infty,2)}
 \end{equation}
\end{lemma}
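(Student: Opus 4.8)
The plan is to solve the ordinary differential equation explicitly and read off both estimates from the resulting one-sided convolution formula. Rewriting $(-i\partial_t - q(\xi_{\Theta^\perp}))w = g$ as $\partial_t w - i q(\xi_{\Theta^\perp}) w = i g$ and using the integrating factor $e^{-iq(\xi_{\Theta^\perp})t}$, every solution has the form
\[
  w(t,\xi_{\Theta^\perp}) = i\int_{c(\xi_{\Theta^\perp})}^{t} e^{iq(\xi_{\Theta^\perp})(t-s)}\, g(s,\xi_{\Theta^\perp})\, ds,
\]
for a base point $c(\xi_{\Theta^\perp}) \in \{-\infty,+\infty\}$ still to be chosen, where $c=+\infty$ is read as $w(t,\xi_{\Theta^\perp}) = -i\int_t^\infty e^{iq(t-s)}g(s,\xi_{\Theta^\perp})\,ds$. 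The fact driving everything is that $\abs{e^{iq(\xi_{\Theta^\perp})(t-s)}} = e^{-\Im q(\xi_{\Theta^\perp})\,(t-s)}$, so the exponential is $\le 1$ exactly when $\Im q\cdot(t-s)\ge 0$. Accordingly I would split $\Theta^\perp$ into the two measurable sets $\{\Im q \ge 0\}$ and $\{\Im q < 0\}$, take $c=-\infty$ on the first (so $s<t$, $t-s\ge 0$) and $c=+\infty$ on the second (so $s>t$, $t-s<0$); on each piece $\abs{e^{iq(t-s)}} = e^{-\abs{\Im q}\,\abs{t-s}} \le 1$ throughout the range of integration. Since $q$ is only assumed measurable, I would note that the integrand is then jointly measurable in $(t,s,\xi_{\Theta^\perp})$, so $w$ is measurable, and differentiating under the integral sign verifies $(-i\partial_t - q)w = g$.

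For the first case, $g \in \Theta(1,2)$, finiteness of $\norm{g}_{\Theta(1,2)}$ gives $g(\cdot,\xi_{\Theta^\perp}) \in L^1(dt)$ for a.e. $\xi_{\Theta^\perp}$, so the defining integral converges absolutely and, using $\abs{e^{iq(t-s)}}\le 1$,
\[
  \abs{w(t,\xi_{\Theta^\perp})} \le \int_{\R} \abs{g(s,\xi_{\Theta^\perp})}\, ds = \norm{g(\cdot,\xi_{\Theta^\perp})}_{L^1(dt)}.
\]
Taking the supremum over $t$ and then the $L^2(d\xi_{\Theta^\perp})$ norm gives both $w \in \Theta(\infty,2)$ and \eqref{eq:45}. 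This case needs no sign hypothesis on $\Im q$: when $\Im q(\xi_{\Theta^\perp})=0$ the kernel has modulus exactly $1$, and the $L^1(dt)$ integrability of the slice of $g$ still makes the formula meaningful.

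For the second case, $g \in \Theta(\infty,2)$ with $c_0 := \inf_{\Theta^\perp}\abs{\Im q} > 0$, each integral is genuinely one-sided, and on its range $\abs{e^{iq(t-s)}} = e^{-\abs{\Im q(\xi_{\Theta^\perp})}\,\abs{t-s}}$, so
\[
  \abs{w(t,\xi_{\Theta^\perp})} \le \norm{g(\cdot,\xi_{\Theta^\perp})}_{L^\infty(dt)} \int_0^\infty e^{-\abs{\Im q(\xi_{\Theta^\perp})}\, u}\, du \le \frac{1}{c_0}\,\norm{g(\cdot,\xi_{\Theta^\perp})}_{L^\infty(dt)},
\]
using $\int_0^\infty e^{-au}\,du = 1/a \le 1/c_0$ for $a = \abs{\Im q(\xi_{\Theta^\perp})}\ge c_0$. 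Taking the supremum over $t$ and the $L^2(d\xi_{\Theta^\perp})$ norm yields \eqref{eq:46}.

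I do not expect a genuine obstacle; the points needing care are purely bookkeeping: tracking the sign of $\Im q$, which simultaneously dictates the integration limits and bounds the exponential kernel; the measurability of $w$ in $\xi_{\Theta^\perp}$ given that $q$ is merely measurable; and the observation that in the first case no lower bound on $\abs{\Im q}$ is available, so one must exploit $L^1(dt)$-integrability of the slices of $g$ rather than exponential decay of the kernel.
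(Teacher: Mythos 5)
Your proof is correct and follows essentially the same route as the paper: solve the ODE by the one-sided integral formula with the endpoint ($\pm\infty$) chosen according to the sign of $\Im q$ so the exponential kernel has modulus at most one, then bound the convolution by $\norm{e^{iq(\cdot)}}_{L^\infty}\norm{g}_{L^1(dt)}$ for \eqref{eq:45} and by $\norm{e^{iq(\cdot)}}_{L^1}\norm{g}_{L^\infty(dt)}$ for \eqref{eq:46}, and finish by taking $L^2(d\xi_{\Theta^\perp})$ norms. Your added remarks on measurability in $\xi_{\Theta^\perp}$ are fine but not a point of divergence.
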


 \begin{remark}
   It is also true that
   $\norm{w}_{\Theta(\infty,p)} \le
   \norm{g}_{\Theta(1,p)}$ and
   $\norm{w}_{\Theta(\infty,p)} \le
   \norm{g}_{\Theta(\infty,p)} /(\inf \abs{\Im q})$ with
   $1\le p \le \infty$.
 \end{remark}

 \begin{proof}
   The general solution to $(-i\partial_t - q(\xi_{\Theta^\perp})) w = g$ 
   is
 \[
   w(t\Theta+\xi_{\Theta^\perp}) = i\int_{t_0}^t 
   e^{iq(\xi_{\Theta^\perp})(t-t')} g(t'\Theta+\xi_{\Theta^\perp}) dt', 
   \quad t_0\in\R\cup\{\pm\infty\}.
 \]
   If $\Im q(\xi_{\Theta^\perp})=0$ we may set $t_0$ as we please and the 
   claim follows. If $\Im q < 0$ set $t_0 = \infty$, and then 
   $\abs{\exp(iq(\xi_{\Theta^\perp})(t-t'))} \le 1$ for 
   $t'\in{[{t,t_0}]}$. Similarly, if $\Im q > 0$ set $t_0 = -\infty$.

   Now \eqref{eq:45} follows by estimating the integral on
   the right by the  \(L^{\infty}\) norm of the
   exponential times the  \(L^{1}\) norm of  \(g\) for
   each fixed  \(\xi_{\Theta^\perp}\), and then taking
     \(L^{2}\) norms in the  \(\Theta^{\perp}\)
     hyperplane. The inequality   \eqref{eq:46} follows
     in the same way, but using the \(L^{1}\) norm of the
   exponential times the  \(L^{\infty}\) norm of  \(g\)
   instead of the other way around.

 \end{proof}

  In general the differential equation 
  $P(-i(\Theta\cdot\nabla)\Theta + \xi_{\Theta^\perp}) 
  \f{\Theta^\perp}{u} = \f{\Theta^\perp}{f}$ is not first 
  order in $\partial_t = \Theta\cdot\nabla$, but we can
  factor it into a product of first order operators  of the
  form \((-i\partial_t - q(\xi_{\Theta^\perp}))\), and then
  use a partial fractions expansion to express its
  solution as a sum of solutions to first order ODE's.

\begin{definition}
  For $\xi_{\Theta^\perp}$ fixed, let $p:\C\to\C$ be the polynomial in 
  $\tau$
\[
  p(\tau) = P(\tau\Theta+\xi_{\Theta^\perp}).
\]
  Then $p'(\tau) = \Theta\cdot\nabla P(\tau\Theta+\xi_{\Theta^\perp})$.
\end{definition}

\begin{lemma}\label{partialFractionDecomposition}
  Let $p:\C\to\C$ be a polynomial of degree $N \geqslant 1$. Assume that its 
  roots $\tau_j$ are simple and that its leading coefficient is $p_N$. 
  Then
\begin{equation}
  \frac{1}{p(\tau)} = \sum_{j=1}^N \frac{1}{(\tau-\tau_j) 
  p_N\prod_{k\neq j}(\tau_j-\tau_k)} = \sum_{j=1}^N 
  \frac{1}{(\tau-\tau_j) p'(\tau_j)}.
\end{equation}
\end{lemma}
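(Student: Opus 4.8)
The plan is to establish the partial fraction decomposition
\[
  \frac{1}{p(\tau)} = \sum_{j=1}^N \frac{1}{(\tau-\tau_j)\,p_N\prod_{k\neq j}(\tau_j-\tau_k)}
\]
by the standard method of undetermined coefficients, and then to recognize the denominator as $p'(\tau_j)$. First I would write $p(\tau) = p_N\prod_{k=1}^N(\tau - \tau_k)$, which is valid since the leading coefficient is $p_N$ and the roots $\tau_j$ are simple (hence distinct, and there are exactly $N$ of them). Since $1/p$ is a proper rational function with $N$ distinct simple poles, it admits a decomposition $1/p(\tau) = \sum_{j=1}^N A_j/(\tau-\tau_j)$ for constants $A_j$. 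To find $A_j$, multiply both sides by $(\tau-\tau_j)$ and let $\tau\to\tau_j$: the left side becomes $\lim_{\tau\to\tau_j}(\tau-\tau_j)/p(\tau) = 1/p'(\tau_j)$ (this is just the derivative formula, since $p(\tau_j)=0$), while the right side tends to $A_j$. Hence $A_j = 1/p'(\tau_j)$, which gives the second displayed equality immediately.

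For the first equality, I would compute $p'(\tau_j)$ directly from the factored form. Differentiating $p(\tau) = p_N\prod_{k=1}^N(\tau-\tau_k)$ by the product rule gives $p'(\tau) = p_N\sum_{m=1}^N\prod_{k\neq m}(\tau-\tau_k)$; evaluating at $\tau = \tau_j$, every term with $m\neq j$ contains the factor $(\tau_j - \tau_j) = 0$, so only the $m=j$ term survives, yielding $p'(\tau_j) = p_N\prod_{k\neq j}(\tau_j - \tau_k)$. Substituting this into $A_j = 1/p'(\tau_j)$ produces the first expression and completes the proof.

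There is essentially no obstacle here; this is a routine fact from algebra. The only point requiring a word of care is the passage $\lim_{\tau\to\tau_j}(\tau-\tau_j)/p(\tau) = 1/p'(\tau_j)$, which one justifies either by writing $p(\tau) = p(\tau) - p(\tau_j)$ and using the definition of the derivative, or simply by L'Hôpital's rule; and one must note that $p'(\tau_j)\neq 0$, which holds precisely because the roots are simple. Everything else is a direct computation with the factored form of $p$.
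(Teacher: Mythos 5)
Your proof is correct and takes essentially the same route as the paper, whose entire proof is the one-line observation that $p'(\tau_j)=\lim_{\tau\to\tau_j}p(\tau)/(\tau-\tau_j)$ since $p(\tau_j)=0$, leaving the standard partial-fraction step and the product-rule computation implicit. You have simply filled in those routine details.
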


\begin{proof}
  $p'(\tau_j) = \lim_{\tau\to\tau_j} p(\tau)/(\tau-\tau_j)$ since 
  $p(\tau_j)=0$.
\end{proof}

If, for some direction  \(\Theta_{k}\),
\(|p'(\tau_{j}(\xi_{\Theta_{k}^{\perp}}))|>\varepsilon>0\) for
all  \(\xi_{\Theta_{k}^{\perp}}\) and for all  \(j\) in the
support of  \(\ftpk{f_{k}}\) , then we can define  \(u_{kj}\) as
 solutions to  
\[
  \big(-i\partial_t - \tau_j(\xi_{\Theta_k^\perp})\big) 
  \f{\Theta_k^\perp}{u_{kj}} = \f{\Theta_k^\perp}{f_k}/p'(\tau_j)
\]
 that satisfy \eqref{eq:45} with
 \(w=\f{\Theta_k^\perp}{u_{kj}}\) and
 \(g=\ftpk{f_{k}}/p'(\tau_{j})\). Then 
 $u_k = \sum_j u_{kj}$ will solve $P(D)u_k 
  = f_k$.

  We must find a finite set of directions  $\Theta_k$, and split $f = 
  \sum_k f_k$ such that $\f{}{f_k}(\xi) = 0$ whenever 
  $\xi_{\Theta^\perp}$ is such that $|p'(\tau_j)|\le\varepsilon$ for any $j$, 
  as was done in \eqref{eq:33}. Thus we need to define the sets where
  $p'(\tau_j)$ becomes small. In reading the definition
  below, recall that  \(\xi_{\Theta^{\perp}}\) is the
  component of  \(\xi\) perpendicular to  \(\Theta\) and
  \(\tau_{j}=\tau_{j}(\xi_{\Theta^{\perp}})\), \(j=1,\ldots,N\) are the
  roots of  \(p(\tau)\).

\begin{definition}
  Given $\Theta\in\mathbb{S}^{n-1}$ and $\varepsilon \geqslant 0$ let
\begin{align}\label{actualBadCylinderDef}
  \mathscr{B}_{\Theta,\varepsilon} &= \{ \xi\in\R^n \mid 
  \min_{P(\tau_j\Theta+\xi_{\Theta^\perp})=0} \abs{\Theta\cdot\nabla 
  P(\tau_j\Theta+\xi_{\Theta^\perp})} \le \varepsilon \} \notag \\ &= 
  \{ \xi\in\R^n \mid \min_{p(\tau_j)=0} \abs{p'(\tau_j)} \le 
  \varepsilon \}
\end{align}
  where the minimum is taken with respect to
  $\tau_j\in\C$. If, for some  \(\Theta\),
  $\deg p = 0$ we adopt the convention that 
  $\min_{p(\tau_j)=0} \abs{p'(\tau_j)} := 0$.
\end{definition}

\begin{proposition}\label{solExistence}
  Let $P:\C^n\to\C$ be a polynomial of degree $N \geqslant 1$ with principal 
  term $P_N$. Assume that $P_N(\Theta)\neq0$. Let 
  $\f{\Theta^\perp}{f}\in\Theta(1,2)$ be such that $\f{}{f}(\xi) = 0$ 
  for all $\xi\in\mathscr{B}_{\Theta,\varepsilon}$. Then
  there exists  \(u\)  solving $P(D)u = f$ and
\begin{equation}
  \norm{ \f{\Theta^\perp}{u} }_{\Theta(\infty,2)} \le 
  \frac{N}{\varepsilon} \norm{ \f{\Theta^\perp}{f} }_{\Theta(1,2)}.
\end{equation}
\end{proposition}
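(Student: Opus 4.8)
The plan is to Fourier transform in the $\Theta^\perp$ hyperplane, reducing $P(D)u=f$ to the family of ODEs $p(-i\partial_t)\,\f{\Theta^\perp}{u}(\cdot\,,\xi_{\Theta^\perp}) = \f{\Theta^\perp}{f}(\cdot\,,\xi_{\Theta^\perp})$, where $p(\tau)=P(\tau\Theta+\xi_{\Theta^\perp})$. Since $P_N(\Theta)\neq 0$, the leading coefficient $p_N = P_N(\Theta)$ is nonzero and $\deg p = N$ for every $\xi_{\Theta^\perp}$, so $p$ has exactly $N$ roots $\tau_j(\xi_{\Theta^\perp})$ counted with multiplicity. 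First I would argue that on the set where $\f{}{f}(\xi)\neq 0$, i.e.\ off $\mathscr{B}_{\Theta,\varepsilon}$, the roots are automatically simple: if $\tau_j$ were a double root then $p'(\tau_j)=\Theta\cdot\nabla P(\tau_j\Theta+\xi_{\Theta^\perp})=0$, forcing $\xi\in\mathscr{B}_{\Theta,\varepsilon}$ for every $\varepsilon\ge 0$. Hence on $\supp\f{}{f}$ we have $N$ simple roots and $\min_j|p'(\tau_j)|>\varepsilon$.

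Next I would apply Lemma \ref{partialFractionDecomposition} to write $1/p(\tau) = \sum_{j=1}^N 1/\big((\tau-\tau_j)p'(\tau_j)\big)$, valid pointwise in $\xi_{\Theta^\perp}$ on $\supp\f{}{f}$. Correspondingly I define, for each $j$, the function $w_j = \f{\Theta^\perp}{u_{kj}}$ to be the solution furnished by Lemma \ref{PDEmodel} (the $\Theta(1,2)$ case, since we only assume $\f{\Theta^\perp}{f}\in\Theta(1,2)$) of
\begin{equation}\nn
  \big(-i\partial_t - \tau_j(\xi_{\Theta^\perp})\big) w_j = \f{\Theta^\perp}{f}/p'(\tau_j),
\end{equation}
with the appropriate choice of lower limit $t_0\in\{\pm\infty\}$ dictated by the sign of $\Im\tau_j$ (and arbitrary when $\Im\tau_j=0$). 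Then $u := \finv{\Theta^\perp}{\sum_{j=1}^N w_j}$ satisfies $P(D)u=f$, because applying $p(-i\partial_t)$ to $\sum_j w_j$ and using the partial fraction identity collapses the sum back to $\f{\Theta^\perp}{f}$. On the support of $\f{}{f}$ the division by $p'(\tau_j)$ is harmless since $|p'(\tau_j)|>\varepsilon$; off that support all the $w_j$ vanish because the right-hand side does.

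For the estimate, Lemma \ref{PDEmodel} gives $\norm{w_j}_{\Theta(\infty,2)} \le \norm{\f{\Theta^\perp}{f}/p'(\tau_j)}_{\Theta(1,2)} \le \tfrac{1}{\varepsilon}\norm{\f{\Theta^\perp}{f}}_{\Theta(1,2)}$, using $|p'(\tau_j(\xi_{\Theta^\perp}))|>\varepsilon$ on the support in the $\xi_{\Theta^\perp}$ variable and the fact that the quotient is supported where $\f{\Theta^\perp}{f}$ is. Summing over the $N$ values of $j$ and using the triangle inequality for $\norm{\cdot}_{\Theta(\infty,2)}$ yields $\norm{\f{\Theta^\perp}{u}}_{\Theta(\infty,2)} \le \sum_{j=1}^N \norm{w_j}_{\Theta(\infty,2)} \le \tfrac{N}{\varepsilon}\norm{\f{\Theta^\perp}{f}}_{\Theta(1,2)}$, which is the claim. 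The main obstacle, and the point needing care, is the measurability and well-definedness of the root functions $\xi_{\Theta^\perp}\mapsto\tau_j(\xi_{\Theta^\perp})$ and of $p'(\tau_j)$: the roots cannot in general be chosen to depend continuously (let alone smoothly) on $\xi_{\Theta^\perp}$, so one must either invoke a measurable selection theorem for roots of polynomials with measurable coefficients, or — cleaner — avoid selecting individual roots altogether by noting that the \emph{sum} $\sum_j \tfrac{1}{(\tau-\tau_j)p'(\tau_j)}$ equals the globally defined measurable function $1/p(\tau)$, and that the needed bound $|p'(\tau_j)|>\varepsilon$ for all $j$ is exactly the complement of $\mathscr{B}_{\Theta,\varepsilon}$, a condition on $\xi_{\Theta^\perp}$ that does not require naming the roots. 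One still needs the individual $w_j$ to build $u$, so a measurable enumeration of the roots on $\R^n\setminus\mathscr{B}_{\Theta,\varepsilon}$ (where they are simple, hence locally given by the implicit function theorem) must be produced; this is standard but should be acknowledged.
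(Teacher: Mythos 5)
Your proof is correct and follows essentially the same route as the paper: partial Fourier transform, the partial fraction identity of Lemma \ref{partialFractionDecomposition}, Lemma \ref{PDEmodel} for each first-order factor $(-i\partial_t-\tau_j)$ with source $\f{\Theta^\perp}{f}/p'(\tau_j)$, and the triangle inequality giving the factor $N/\varepsilon$. The measurability point you flag at the end is resolved in the paper simply by ordering the roots lexicographically (by $\Re\tau_j$, then $\Im\tau_j$), which yields measurable maps $\xi_{\Theta^\perp}\mapsto\tau_j(\xi_{\Theta^\perp})$ because the coefficients of $p(\tau)$ are polynomials in $\xi_{\Theta^\perp}$.
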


\begin{remark}
  The mixed norm estimate is also true for any
  $1\le p\le\infty$:
  $\norm{\f{\Theta^\perp}{u}}_{\Theta(\infty,p)} \le N
  \varepsilon^{-1}
  \norm{\f{\Theta^\perp}{f}}_{\Theta(1,p)}$.
\end{remark}

\begin{proof}
  The roots of $p$ are simple when $\xi_{\Theta^\perp} \in 
  \R^n\setminus\mathscr{B}_{\Theta,\varepsilon}$ so we have
\[
  \frac{1}{p(\tau)} = \sum_{p(\tau_j)=0} 
  \frac{1}{(\tau-\tau_j)p'(\tau_j)}
\]
  there according to Lemma \ref{partialFractionDecomposition}. For each 
  $\xi_{\Theta^\perp}$, order the roots $\tau_j$ lexicographically by 
  $j=1,\ldots,N$, i.e. $\Re \tau_j \le \Re \tau_{j+1}$ and $\Im \tau_j 
  < \Im \tau_{j+1}$ if the real parts are equal. The maps 
  $\xi_{\Theta^\perp} \mapsto \tau_j(\xi_{\Theta^\perp})$ are measurable 
  since the coefficients of $p(\tau)$ are polynomials in the
  $\xi_{\Theta^\perp}$.
  
  The assumption on $\f{}{f}$ implies that
\[
  \norm{ \f{\Theta^\perp}{f}/p'(\tau_j) }_{\Theta(1,2)} \le 
  \varepsilon^{-1} \norm{ \f{\Theta^\perp}{f} }_{\Theta(1,2)} < \infty.
\]
  for any root $\tau_j = \tau_j(\xi_{\Theta^\perp})$ of $p(\tau_j)=0$.  
  Let $u_j \in \Theta(\infty,2)$ be the solution to
\[
  \big(-i\partial_t - \tau_j(\xi_{\Theta^\perp})\big) 
  \f{\Theta^\perp}{u_j} = \frac{ \f{\Theta^\perp}{f}} {p'(\tau_j)}
\]
  given by Lemma \ref{PDEmodel}. It satisfies the norm estimate
\[  
  \norm{\f{\Theta^\perp}{u_j}}_{\Theta(\infty,2)} \le 
  \varepsilon^{-1} \norm{ \f{\Theta^\perp}{f} }_{\Theta(1,2)}.
\]
  The claim follows by setting $u = \sum_{j=1}^N u_j$ and recalling the 
  partial fraction decomposition of Lemma 
  \ref{partialFractionDecomposition}.
\end{proof}

  We now focus on the second task, splitting an arbitrary 
  source function $f$ into a sum $f = f_1 + f_2 + \ldots + f_m$ with 
  directions $\Theta=\Theta_1, \Theta_2, \ldots, \Theta_m$ such that 
  $\f{}{f_k}(\xi) = 0$ when $\xi \in \mathscr{B}_{\Theta_k, 
  \varepsilon}$. Proposition \ref{solExistence} would then imply the 
  existence of a solution to $P(D)u_k=f_k$. Linearity then implies that 
  $u = u_1 + u_2 + \ldots + u_m$ solves the original
  problem $P(D)u=f$.\\

  The partial fraction expansion in Lemma
  \ref{partialFractionDecomposition} cannot hold if $P(D)$
  has a double characteristic, even a complex double
  characteristic. Unlike in Theorem \ref{th:main}, the
  algebraic techniques we use here rely on properties of
  the discriminant which involves the multiplicities of
  all the roots, including the the complex ones. Hence we
  require that \(P\) be what algebraic geometers call a
  \emph{nonsingular} polynomial.

\begin{definition}
  A polynomial $P:\C^n\to\C$ is \emph{nonsingular} if, given 
  $\xi\in\C^n$, $P(\xi) = 0$ implies that $\abs{\nabla P(\xi)}\neq 0$.
\end{definition}
  
The sets $\mathscr{B}_{\Theta, \varepsilon}$ are difficult
to deal with for a general polynomial $P$, but the sets
$\mathscr{B}_{\Theta,0}$ are algebraic sets, and
this will enable us to prove that the intersection of
finitely many of them is empty. In order to conclude that
the intersections of the
$\mathscr{B}_{\Theta, \varepsilon}$ are empty, we will
assume that each $\mathscr{B}_{\Theta, \varepsilon}$ is
contained in a tubular neighborhood of $\mathscr{B}_{\Theta,0}$.

Additionally, we require a compactness hypothesis on 
projections of two of the sets $\mathscr{B}_{\Theta,0}$
to insure that the cut-off function $\Psi$ associated with
$\mathscr{B}_{\Theta, \varepsilon}$ is a  Fourier
multiplier as in Lemma \ref{multiplierNorm}. 
  
\begin{definition}\label{admissibleSymbol}
  Let $P:\C^n\to\C$ be a degree $N \geqslant 1$ nonsingular polynomial with 
  principal term $P_N$. It is \emph{admissible} if
\begin{enumerate}
  \item\label{twoCylindersCompatibility} for any 
  $\Theta\in\mathbb{S}^{n-1}(\R^n) \setminus P_N^{-1}(0)$ and $r_0>0$ 
  there is $\varepsilon>0$ such that
\[
  \mathscr{B}_{\Theta, \varepsilon} \subset 
  \overline{B}(\mathscr{B}_{\Theta,0}, r_0),
\]  

  \item\label{compactDirections} there are non-parallel vectors 
  $\Theta_1, \Theta_2 \in \mathbb{S}^{n-1}(\R^n) \setminus P_N^{-1}(0)$ 
  such that $\mathscr{B}_{\Theta_1,0} \cap (\Theta_1)^\perp$ and 
  $\mathscr{B}_{\Theta_2,0} \cap (\Theta_2)^\perp$ are compact,
\end{enumerate}
  where $\mathscr{B}_{\Theta,\varepsilon}$ is defined in 
  \eqref{actualBadCylinderDef}.
\end{definition}

We suspect that Condition \ref{twoCylindersCompatibility} is true for 
any nonsingular polynomial. It has been straightforward to verify in the 
examples we have considered. Another way to state this condition is as 
follows: let $\mathscr{D}$ be the set of $\xi\in\Theta^\perp$ where 
$p(\tau)$, whose coefficients are polynomials of $\xi$, has a double 
root, i.e. $p(\tau_0)=p'(\tau_0)=0$. Let $r_0 > 0$. Then we require that 
there is $\varepsilon>0$ such that if $\xi\in\Theta^\perp$, 
$d(\xi,\mathscr{D}) > r_0$, then $\abs{p'(\tau_0)} > \varepsilon$ for 
all roots $\tau=\tau_0$ of $p(\tau)=0$.

Condition \ref{compactDirections} is likely
only a technical requirement.  Requiring it could be
avoided if a theorem similar to Corollary
\ref{th:BasicEstCor} and Lemma \ref{th:qEst} could be
proven for higher order operators. Moreover this condition
is always satisfied in $\R^2$ because each
$\mathscr{B}_{\Theta,0}$ is a finite set of lines in the
direction $\Theta$ in this case.

\bigskip A key point in our proof is the observation that
$\mathscr{B}_{\Theta,0}$ is an algebraic variety which can
be defined by the vanishing of a certain discriminant. We
first show that there is an infinite sequence of
directions $\Theta_k$ such that the intersection
$\cap_k \mathscr{B}_{\Theta_k,0}$ is empty. Because the
$\mathscr{B}_{\Theta_k,0}$ are algebraic varieties,
Hilbert's basis theorem then guarantees that the
intersection of a finite subset of the
$\mathscr{B}_{\Theta_k,0}$ is empty.

\begin{definition}\label{discDef}
  Let $P:\C^n\to\C$ be a polynomial of degree $N \geqslant 1$. Write $P_N$ for 
  its principal term. For any $\xi\in\C^n$ and $\Theta\in\C^n$ such that 
  $P_N(\Theta)\neq0$ we define
\begin{equation}
  \Delta(\Theta,\xi) = \operatorname{disc}_\tau(P(\tau\Theta+\xi)) := 
  \big(P_N(\Theta)\big)^{2(N-1)} \prod_{i<j} (\tau_i-\tau_j)^2,
\end{equation}
  where $\{\tau_j(\Theta,\xi) \mid j=1,\ldots,N\}$ are the roots of 
  $P(\tau\Theta+\xi)=0$. If $N = 1$ we set 
  $\operatorname{disc}_{\tau}(a_1\tau+a_0) = a_1$.
\end{definition}

\begin{remark}
  The discriminant of a polynomial \(P\) is a polynomial in the 
  coefficients of \(P\).  Hence we can extend $\Delta$ to the set 
  $\C^n\times\C^n$ by analytic continuation, and therefore it is 
  well-defined without the assumption that $P_N(\Theta)\neq0$.  We point 
  out, however, that the discriminant of a degree $N$ polynomial, with 
  the high-order coefficients equal to zero, is not the same as the 
  discriminant of the resulting lower degree polynomial. See for 
  example the introduction of Gel'fand, Kapranov and Zelevinsky 
  \cite{GKZ}.
\end{remark}

\begin{remark}
  We have $\Delta(\Theta,\xi) = \Delta(\Theta, \xi+r\Theta)$ for any 
  $r\in\C$. This follows from the fact that the roots of $P(\tau\Theta+ \xi+r\Theta) = 
  P((\tau+r)\Theta+\xi)$ are just the roots of
  $P(\tau\Theta+ \xi)$, all translated by  \(r\), so the
  discriminant remains the same. 
\end{remark}

\begin{remark}\label{th:DeltaHomog}
  We have $\Delta(\lambda\Theta,\xi) = \lambda^{N(N-1)} 
  \Delta(\Theta,\xi)$ because $P(\tau\lambda\Theta+\xi)$ has roots 
  $\tau_j = r_j/\lambda$ where $P(r_j\Theta+\xi)=0$, and the principal 
  term will be $\big(\lambda^N P_N(\Theta)\big)\tau^N$.
\end{remark}

\begin{definition}\label{algebraicBadCylinderDef}
  Let $\Theta\in\C^{n}$ and $P:\C^n\to\C$ be a degree $N \geqslant 1$ 
  polynomial. Then the \emph{algebraic} tangent set (in the direction 
  $\Theta$) is defined as
\begin{equation}
  \overline{\mathscr{D}_\Theta} = \{ \xi\in\C^n \mid 
  \Delta(\Theta,\xi) = 0\}.
\end{equation}
  The \emph{real} tangent set is $\mathscr{D}_\Theta = 
  \overline{\mathscr{D}_\Theta} \cap \R^n$.
\end{definition}

  Figure \ref{partitionUnityProcedureFig} on page 
  \pageref{partitionUnityProcedureFig} illustrates the
  example  $P(\xi) = 
  \abs{\xi}^2 - 1$ with $\Theta \in \{e_1, e_2, e_3\}$. We have then
\[
  P(\tau\Theta+\xi) = (\Theta\cdot\Theta) \tau^2 + 2(\Theta\cdot\xi) 
  \tau + \xi\cdot\xi-1
\]
  and 
\[
  \Delta(\Theta,\xi) = (\Theta\cdot\xi)^2 - 
  (\Theta\cdot\Theta)(\xi\cdot\xi-1).
\]
  Homogeneity is easy to see in this example and a simple calculation 
  demonstrates that $\Delta(\Theta,\xi+r\Theta) = \Delta(\Theta,\xi)$ 
  for all $r\in\C$, as expected.

  \bigskip We can study the sets $\mathscr{D}_\Theta$ as a proxy for the 
  sets $\mathscr{B}_{\Theta,\varepsilon}$, defined in 
  \eqref{actualBadCylinderDef}, that are actually used.

\begin{lemma}\label{DasBproxy}
  Let $P:\C^n\to\C$ be a degree $N \geqslant 1$ polynomial and 
  $\Theta\in\mathbb{S}^{n-1}(\R^n)$ such that $P_N(\Theta)\neq0$ . Then
\[
  \mathscr{D}_\Theta = \{ \xi\in\R^n \mid \exists \tau_0\in\C: p(\tau_0) 
  = p'(\tau_0) = 0 \} = \mathscr{B}_{\Theta,0}.
\]
\end{lemma}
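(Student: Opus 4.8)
The plan is to reduce all three sets in the statement to the single elementary condition that the polynomial $p(\tau)=P(\tau\Theta+\xi)$ has a multiple root, and to note that the hypothesis $P_N(\Theta)\neq0$ is precisely what keeps the degree of $p$ equal to $N$ (so the discriminant conventions of Definition \ref{discDef} apply) and what lets us clear the leading factor out of $\Delta(\Theta,\xi)$.

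First I would record that, since the coefficient of $\tau^N$ in $p(\tau)=P(\tau\Theta+\xi)$ is $P_N(\Theta)\neq0$, the polynomial $p$ has degree exactly $N$ and hence exactly $N$ roots $\tau_1,\dots,\tau_N\in\C$ counted with multiplicity; in particular the convention $\min_{p(\tau_j)=0}\abs{p'(\tau_j)}:=0$ appearing in the definition of $\mathscr{B}_{\Theta,\varepsilon}$ (which is invoked only when $\deg p=0$) never enters here. The classical fact that a one-variable polynomial has a root of multiplicity at least two exactly when $p$ and $p'$ share a zero then shows that the middle set $\{\xi\in\R^n\mid\exists\,\tau_0\in\C:\ p(\tau_0)=p'(\tau_0)=0\}$ is precisely the set of $\xi\in\R^n$ for which $p(\tau)=P(\tau\Theta+\xi)$ has a multiple root.

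Next I would identify $\mathscr{D}_\Theta$ with this same set. By Definition \ref{discDef}, $\Delta(\Theta,\xi)=\big(P_N(\Theta)\big)^{2(N-1)}\prod_{i<j}(\tau_i-\tau_j)^2$ for $N\ge2$, while $\Delta(\Theta,\xi)=P_N(\Theta)$ for $N=1$; since $P_N(\Theta)\neq0$, the vanishing $\Delta(\Theta,\xi)=0$ holds iff $\tau_i=\tau_j$ for some $i\neq j$, i.e.\ iff $p$ has a multiple root (and for $N=1$ it never holds, consistent with a linear polynomial having no multiple root). As $\xi\in\R^n$ this reads $\xi\in\overline{\mathscr{D}_\Theta}\cap\R^n=\mathscr{D}_\Theta$. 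Finally, for $\mathscr{B}_{\Theta,0}$, formula \eqref{actualBadCylinderDef} with $\varepsilon=0$ says $\xi\in\mathscr{B}_{\Theta,0}$ iff some root $\tau_0$ of $p$ satisfies $p'(\tau_0)=0$; together with $p(\tau_0)=0$ this makes $\tau_0$ a common zero of $p$ and $p'$, hence a multiple root, and conversely any multiple root lies among the $\tau_j$ and annihilates $p'$, so $\min_j\abs{p'(\tau_j)}=0$. Chaining these equivalences yields $\mathscr{D}_\Theta=\mathscr{B}_{\Theta,0}$ and equality with the middle set. I expect no real obstacle here; the only points needing care are keeping $\deg p=N$ fixed (ensured by $P_N(\Theta)\neq0$) and checking that in the degenerate case $N=1$ all three sets are empty.
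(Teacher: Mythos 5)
Your proposal is correct and follows essentially the same route as the paper: both identify all three sets with the condition that $p(\tau)=P(\tau\Theta+\xi)$ has a multiple root, using the product formula for the discriminant from Definition \ref{discDef} together with $P_N(\Theta)\neq0$ (so $\deg p=N$), the classical fact that a multiple root is a common zero of $p$ and $p'$, and the observation that for $N=1$ all sets are empty. Your write-up merely makes explicit the steps the paper leaves implicit.
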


\begin{proof}
  If $N \geqslant 2$ this follows from the definition of 
  $\mathscr{B}_{\Theta,0}$ in \eqref{actualBadCylinderDef} and the fact 
  that $\tau_0$ is a double root of $p$ if and only if $p(\tau_0)=0$ and 
  $p'(\tau_0)=0$. If $N=1$ then $\xi\in\mathscr{D}_\Theta$ iff the first 
  order coefficient of $p$ vanishes, which is the same condition as 
  $\xi\in\mathscr{B}_{\Theta,0}$. This is impossible since 
  $P_N(\Theta)\neq0$.
\end{proof}

  We will show that if $P$ is nonsingular then the intersection 
  $\cap_{\Theta \in \mathbb{S}^{n-1}} \mathscr{D}_{\Theta}$ is
  empty. In other words, we show that, given any $\xi\in\R^n$, there is 
  some direction $\Theta$ such that the line $\tau\mapsto 
  \tau\Theta+\xi$ is not tangent to the characteristic manifold 
  $P^{-1}(0)$ at any point.

\begin{lemma}\label{goodDirection}
  Assume that $P:\C^n\to\C$ is nonsingular. Let $\xi \in \C^n$. Then 
  there is $\Theta \in \mathbb{S}^{n-1}(\R^n)$ such that 
  $P_N(\Theta)\neq0$ and $\Delta(\Theta,\xi) \neq 0$.
\end{lemma}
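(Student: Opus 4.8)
The plan is to produce the required $\Theta$ by showing that the set it must avoid --- the common zero locus of $P_N$ and of $\Delta(\cdot,\xi)$ --- cannot be all of $\mathbb{S}^{n-1}(\R^n)$. I regard $\Delta(\cdot,\xi)$ as a polynomial in $\Theta\in\C^n$. Since $P_N$ is a nonzero homogeneous polynomial, it suffices to prove that $\Delta(\cdot,\xi)$ is not identically zero on $\C^n$: then $P_N(\Theta)\,\Delta(\Theta,\xi)$ is a nonzero polynomial, homogeneous of degree $N^2$ by Remark \ref{th:DeltaHomog}, so it does not vanish on all of $\R^n$ (a polynomial with complex coefficients vanishing on $\R^n$ is the zero polynomial), hence by homogeneity it does not vanish on $\mathbb{S}^{n-1}(\R^n)$, and any $\Theta$ there with $P_N(\Theta)\,\Delta(\Theta,\xi)\neq0$ satisfies both $P_N(\Theta)\neq0$ and $\Delta(\Theta,\xi)\neq0$. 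The case $N=1$ is immediate, since by Definition \ref{discDef} one computes $\Delta(\Theta,\xi)=\Theta\cdot\nabla P=P_N(\Theta)$; so I assume $N\geq2$.

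It remains to show $\Delta(\cdot,\xi)\not\equiv0$ on $\C^n$, and this is where nonsingularity enters. I would pass to the polynomial ring $R=\C[\Theta_1,\dots,\Theta_n,\tau]$ and set $F(\Theta,\tau)=P(\tau\Theta+\xi)\in R$, which has $\tau$-degree $N$ with leading coefficient $P_N(\Theta)\not\equiv0$; by Definition \ref{discDef}, $\Delta(\Theta,\xi)=\operatorname{disc}_\tau F$, an element of $\C[\Theta]$. Viewing $F$ as a polynomial in $\tau$ over the field $\C(\Theta)$ (characteristic zero), $\operatorname{disc}_\tau F$ vanishes identically if and only if $F$ has a repeated root, equivalently $F$ and $\partial_\tau F$ share a factor of positive $\tau$-degree in $\C(\Theta)[\tau]$; by Gauss's lemma such a common factor can be taken inside the UFD $R$. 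So, assuming for contradiction that $\Delta(\cdot,\xi)\equiv0$, I pick an irreducible $E\in R$ with $\deg_\tau E\geq1$, $E\mid F$ and $E\mid\partial_\tau F$. Because $E$ is irreducible with $\deg_\tau E\geq1$, it is coprime to $\partial_\tau E\neq0$, so $E\mid\partial_\tau F$ improves to $E^2\mid F$.

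Hence $F$ and its full gradient in all of $\Theta_1,\dots,\Theta_n,\tau$ vanish along the hypersurface $\{E=0\}\subset\C^{n+1}$. The chain rule gives $\partial_{\Theta_i}F=\tau\,(\partial_iP)(\tau\Theta+\xi)$ and $\partial_\tau F=\Theta\cdot\nabla P(\tau\Theta+\xi)$. Fix $(\Theta_0,\tau_0)$ with $E(\Theta_0,\tau_0)=0$. From $F(\Theta_0,\tau_0)=0$ we get $P(\tau_0\Theta_0+\xi)=0$, so nonsingularity of $P$ forces $\nabla P(\tau_0\Theta_0+\xi)\neq0$; together with $\tau_0\,(\partial_iP)(\tau_0\Theta_0+\xi)=0$ for every $i$, this forces $\tau_0=0$. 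Thus $\tau$ vanishes on $\{E=0\}$, so $\tau\in(E)$ (the ideal $(E)$ being prime), which forces $E=c\,\tau$ with $c\neq0$. Then $\tau^2\mid F=P(\tau\Theta+\xi)$, i.e. the $\tau^0$- and $\tau^1$-coefficients of $P(\tau\Theta+\xi)$ vanish, namely $P(\xi)=0$ and $\Theta\cdot\nabla P(\xi)=0$ for all $\Theta$, whence $\nabla P(\xi)=0$. This contradicts the nonsingularity of $P$, so $\Delta(\cdot,\xi)\not\equiv0$, completing the proof.

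The step I expect to be the real obstacle is precisely this last one: deducing a contradiction with nonsingularity from the identical vanishing of the discriminant. Working in $\C[\Theta,\tau]$ is what makes it go through, because there the vanishing of $\operatorname{disc}_\tau F$ literally says that $F=P(\tau\Theta+\xi)$ has a repeated irreducible factor, and the only degeneracy of the parametrization $(\Theta,\tau)\mapsto\tau\Theta+\xi$ sits on $\{\tau=0\}$ --- which is exactly what pins the repeated factor down to $\tau$ and produces a singular point of $P$ at $\xi$. Everything else, namely the descent from $\C^n$ to the real unit sphere via homogeneity and the $N=1$ case, is routine.
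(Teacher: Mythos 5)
Your proposal is correct. It follows the same outer skeleton as the paper's proof --- fix $\xi$, show $\Delta(\cdot,\xi)\not\equiv 0$ as a polynomial on $\C^n$, then descend to $\mathbb{S}^{n-1}(\R^n)$ using that a nonzero complex polynomial cannot vanish on $\R^n$ together with the homogeneity of Remark \ref{th:DeltaHomog} --- but the core step, ruling out $\Delta(\cdot,\xi)\equiv 0$ via nonsingularity, is handled by a genuinely different argument. The paper quotes H\"ormander's criterion that the discriminant of $p(\tau,\Theta)$ is not identically zero when $p$ is square-free, and then disposes of a putative square factor $S_1^2\,S_2$ by the substitution $\tau=\lambda$, $\Theta=(z-\xi)/\lambda$, which transfers the square factor to $P(z)$ itself (with a separate case analysis when $S_1$ is independent of $\Theta$, leading to second-order vanishing of $P$ at $\xi$). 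You instead argue self-containedly in $R=\C[\Theta,\tau]$: vanishing of $\operatorname{disc}_\tau F$ over $\C(\Theta)$ gives, via Gauss's lemma, an irreducible $E\in R$ of positive $\tau$-degree with $E\mid F$, $E\mid\partial_\tau F$, hence $E^2\mid F$; the identities $\partial_{\Theta_i}F=\tau\,(\partial_iP)(\tau\Theta+\xi)$ together with nonsingularity force $\tau$ to vanish on $\{E=0\}$, so by the Nullstellensatz $E=c\tau$, and then $\tau^2\mid F$ yields $P(\xi)=0$, $\nabla P(\xi)=0$, the desired contradiction. What your route buys is independence from the cited appendix of H\"ormander and the elimination of the paper's case split, always landing the contradiction at the fixed point $\xi$; what it costs is heavier (though standard) commutative algebra --- Gauss's lemma, the $E^2\mid F$ upgrade, and the Nullstellensatz --- where the paper's substitution trick is more elementary once the discriminant criterion is granted. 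All the individual steps you use (the equivalence of $\Delta(\cdot,\xi)\equiv0$ with a repeated factor over $\C(\Theta)$, the coprimality of $E$ and $\partial_\tau E$ in characteristic zero, the vanishing of all first partials of $F$ on $\{E=0\}$ once $E^2\mid F$, and the $N=1$ case $\Delta=P_N(\Theta)$) check out.
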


\begin{proof}
  We keep the second variable $\xi$ fixed in this proof,
  and suppress the dependence on  \(\xi\), writing 
  $\Delta(\Theta) = \Delta(\Theta, \xi)$. We view
  \(P(\tau\Theta+\xi)\) as a polynomial
  \(p(\tau,\Theta)\) in  \(\tau\) and  \(\Theta\).  

  According to \cite{Hormander2}, Appendix 1.2., $\Delta$
  is a polynomial in $\Theta\in\C$ and
  $\Delta \not\equiv 0$ if \(p(\tau,\Theta)\)
  is square-free. A nontrivial complex polynomial cannot
  vanish identically on $\R^n$, and thus neither on
  $\R^n \setminus P_N^{-1}(0)$.

Hence, if
  $p(\tau,\Theta)$ has no square
  factor, there is a $\Theta\in\R^n$ such that
  $P_N(\Theta)\neq0$ and $\Delta(\Theta) \neq 0$. Because
  \(\Delta\), as pointed out in Remark
  \ref{th:DeltaHomog}, is a homogeneous function of
  \(\Theta\), we may scale  \(\Theta\) so it has unit
  length, and the lemma follows in this case.

  Next, we show that if \(p(\tau,\Theta)\) has a square
  factor, then $P(z)$, viewed as a polynomial of
  \(z\in\C^{n}\) has a square factor, which contradicts
  the assumption that $P$ is nonsingular. Suppose that
  \(p(\tau,\Theta) = \big(S_1(\tau,\Theta)\big)^2
  S_2(\tau,\Theta)\). If we choose \(\tau=\lambda\) and
  \(\Theta=(z-\xi)/\lambda\), then, for any  \(z\in\C\), 

\begin{eqnarray*}
  P(z)&=& P\left(\lambda \frac{z-\xi}{\lambda} +
          \xi\right)
  \\
       &=& p(\lambda,(z-\xi)/\lambda)
  \\
       &=& \big( S_1(\lambda, (z-\xi)/\lambda) \big)^2
           S_2(\lambda, (z-\xi)/\lambda )
  \end{eqnarray*}
so that, unless  \(S_{1}(\lambda,\Theta)\) is independent
of  \(\Theta\),  \(P(z)\) must have a square factor, which
is  a contradiction.  Suppose now that \(S_{1}\) is independent of  \(\Theta\).  
  It is a non-constant polynomial, so there is $\tau_0\in\C$ such that 
  $S_1(\tau_0) = 0$. If $\tau_0\neq0$, choosing
  $\lambda=\tau_0$ implies that $P\equiv0$. If $\tau_0=0$, then  
  $P(\tau\Theta+\xi)$ vanishes to at least second order at the point $\xi$ in every 
  direction $\Theta\in\C^n$. This means that $\xi$ is  a singular 
  point of $P$, again contradicting the hypothesis that
  \(P\) is nonsingular. Hence $p(\tau,\Theta)$ has no 
  square factors and thus $\Delta$ is not identically zero.
\end{proof}

\begin{proposition}\label{algProp}
  Let $P:\C^n\to\C$ be a nonsingular polynomial of degree $N \geqslant 1$ with 
  principal term $P_N$. Then there is a finite set of directions 
  $\Theta_1, \ldots, \Theta_m \in \mathbb{S}^{n-1}(\R^n) \setminus 
  P_N^{-1}(0)$ such that
\[
  \bigcap_{k=1}^m \overline{\mathscr{D}_{\Theta_k}} = \emptyset.
\]
\end{proposition}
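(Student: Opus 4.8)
The plan is to combine Lemma \ref{goodDirection} with Hilbert's basis theorem (equivalently, the Noetherian property of the polynomial ring). First I would observe that each $\overline{\mathscr{D}_{\Theta}}$ is the zero set of the single polynomial $\xi \mapsto \Delta(\Theta,\xi)$, which is a genuine polynomial in $\xi\in\C^n$ (the discriminant is a polynomial in the coefficients of $P(\tau\Theta+\xi)$, which are themselves polynomials in $\xi$). Consider the ideal $I \subset \C[\xi_1,\ldots,\xi_n]$ generated by the (uncountably many) polynomials $\{\Delta(\Theta,\xi) : \Theta\in\mathbb{S}^{n-1}(\R^n)\setminus P_N^{-1}(0)\}$. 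By the Hilbert basis theorem, $I$ is finitely generated, so there exist finitely many directions $\Theta_1,\ldots,\Theta_m$ with $I = (\Delta(\Theta_1,\cdot),\ldots,\Delta(\Theta_m,\cdot))$; in particular the common zero set of all the $\Delta(\Theta,\cdot)$ equals the common zero set of just these $m$ of them, i.e. $\bigcap_{\Theta} \overline{\mathscr{D}_\Theta} = \bigcap_{k=1}^m \overline{\mathscr{D}_{\Theta_k}}$.

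Next I would show that the full intersection $\bigcap_{\Theta} \overline{\mathscr{D}_\Theta}$ is empty. This is exactly the content of Lemma \ref{goodDirection}: given any $\xi\in\C^n$, nonsingularity of $P$ produces a direction $\Theta\in\mathbb{S}^{n-1}(\R^n)$ with $P_N(\Theta)\neq 0$ and $\Delta(\Theta,\xi)\neq0$, which says precisely that $\xi\notin\overline{\mathscr{D}_\Theta}$. Hence no $\xi$ lies in every $\overline{\mathscr{D}_\Theta}$, so $\bigcap_\Theta \overline{\mathscr{D}_\Theta} = \emptyset$, and therefore $\bigcap_{k=1}^m \overline{\mathscr{D}_{\Theta_k}} = \emptyset$ as well. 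Since the chosen $\Theta_k$ satisfy $P_N(\Theta_k)\neq 0$ (we only put such directions into the generating set), this gives the desired finite set.

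One technical point to be careful about: the generating set is indexed by $\Theta$ ranging over a sphere, and for the Hilbert basis argument we want an honest subset of a polynomial ring. This is fine — for each fixed $\Theta$ with $P_N(\Theta)\neq0$, $\Delta(\Theta,\xi)$ is a well-defined element of $\C[\xi]$, and we simply take the ideal generated by this (set-indexed, but that is irrelevant) family. The Hilbert basis theorem guarantees a finite generating subset regardless of the cardinality of the original family. I would also note that the reduction from "$\bigcap$ of generators' zero sets" to "$\bigcap$ of a finite subfamily's zero sets" uses only that $V(I) = V(\Delta(\Theta_1,\cdot),\ldots,\Delta(\Theta_m,\cdot))$ when these generate $I$, together with $V(I) = \bigcap_\Theta V(\Delta(\Theta,\cdot))$; neither direction needs the Nullstellensatz.

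I do not anticipate a serious obstacle here — the lemma does the geometric work and the basis theorem does the finiteness bookkeeping. The only place requiring a moment's thought is making sure we invoke Lemma \ref{goodDirection} over all of $\C^n$ (it is stated for $\xi\in\C^n$, which is what we need, since $\overline{\mathscr{D}_\Theta}$ is a subset of $\C^n$) and that the directions it returns automatically satisfy $P_N(\Theta)\neq0$, which they do by the statement of that lemma.
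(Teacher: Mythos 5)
Your proof is correct, and it reaches the conclusion by a leaner route than the paper, although the two main ingredients are the same (Lemma \ref{goodDirection} plus Noetherianity of $\C[\xi_1,\ldots,\xi_n]$). The paper fixes a countable sequence of directions that is \emph{dense} in $\mathbb{S}^{n-1}(\R^n)\setminus P_N^{-1}(0)$, forms the decreasing varieties $V_\ell=\bigcap_{k\le\ell}\overline{\mathscr{D}_{\Theta_k}}$, stabilizes the ascending chain $\mathbb{I}(V_1)\subset\mathbb{I}(V_2)\subset\cdots$ via Hilbert's basis theorem (using $V_\ell=\mathbb{V}(\mathbb{I}(V_\ell))$), and then, to rule out a point $\xi_*\in V_m$, needs the density of the sequence together with continuity of $\Theta\mapsto\Delta(\Theta,\xi_*)$ to contradict Lemma \ref{goodDirection}. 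You instead apply Noetherianity directly to the ideal generated by the whole (uncountable) family $\{\Delta(\Theta,\cdot)\}$, extract a finite generating subfamily, identify common zero sets of a family and of the ideal it generates (no Nullstellensatz needed, as you note), and kill the total intersection pointwise by Lemma \ref{goodDirection}; this avoids the dense sequence, the continuity argument, and the $\mathbb{V}(\mathbb{I}(\cdot))$ step entirely, at the cost of the small (and correctly flagged) observation that in a Noetherian ring any generating family of an ideal contains a finite generating subfamily. One practical remark: in the paper the finite set is produced as an initial segment of an arbitrarily prescribed dense sequence, which makes it trivial to insist that particular directions (e.g.\ the two from Condition \ref{compactDirections} of Definition \ref{admissibleSymbol}, needed later in Proposition \ref{epsNeighborhood}) appear in the list; with your argument the same is achieved simply by adjoining those directions to your finite family, since enlarging the family only shrinks the intersection.
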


\begin{proof}
  We recall a few facts from algebra. A ring $R$ is \emph{Noetherian} if 
  every ideal is finitely generated. Another characterization 
  is that every increasing sequence of ideals stabilizes at a 
  finite index. In other words, if $I_1 \subset I_2 \subset \ldots$ are 
  ideals in $R$, then there is $m < \infty$ such that $I_\ell = I_m$ for 
  all $\ell \geqslant m$.

  The ring of complex numbers is Noetherian: its only
  ideals are $\{0\}$ and $\C$. Hilbert's basis theorem
  says that polynomial rings over Noetherian rings are
  also Noetherian. If $V \subset \C^n$ is an affine
  variety then $V = \mathbb{V}(\mathbb{I}(V))$, where
\begin{align*}
  &\mathbb{V}(I) = \{ \xi \in \C^n \mid f(\xi) = 0 \quad \forall f \in 
  I\},\\ &\mathbb{I}(V) = \{ f \in \C[\xi_1, \ldots, \xi_n] \mid f(\xi) 
  = 0 \quad \forall \xi \in V\}.
\end{align*}

  \smallskip Now we begin the proof. Let $\Theta_1, \Theta_2, \ldots \in 
  \mathbb{S}^{n-1}(\R^n) \setminus P_N^{-1}(0)$ be a sequence that's 
  dense in the surface measure inherited from the Lebesgue measure of 
  $\R^n$. Set
\[
  V_\ell := \{\xi \in \C^n \mid \Delta(\Theta, \xi) = 0, \text{ for } 
  \Theta = \Theta_1, \Theta_2, \ldots, \Theta_\ell\} = \bigcap_{k=1}^\ell 
  \overline{\mathscr{D}_{\Theta_k}}.
\]
  We have $V_1 \supset V_2 \supset V_3 \supset \ldots$ and hence 
  $\mathbb{I}(V_1) \subset \mathbb{I}(V_2) \subset \mathbb{I}(V_3) 
  \subset \ldots$ etc. By Hilbert's basis theorem there is a finite $m$ 
  such that $\mathbb{I}(V_\ell) = \mathbb{I}(V_m)$ for all $\ell \geqslant 
  m$. This implies that $V_\ell = \mathbb{V}(\mathbb{I}(V_\ell)) = 
  \mathbb{V}(\mathbb{I}(V_m)) = V_m$ for $\ell\geqslant m$.

  If $V_m = \emptyset$ we are done. If not, then there is $\xi_{*}
  \in V_m$, such that 
\[
  \Delta(\Theta_k,\xi_{*}) = 0
\]
for all $k\in\N$. Because $\{\Theta_k\}$ is dense in
$\mathbb{S}^{n-1}(\R^n)$ and the discriminant is a
continuous function, we see that \(\Delta(\Theta, \xi_{*})=0\)
for all
$\Theta\in\mathbb{S}^{n-1}(\R^n)$, which contradicts Lemma
\ref{goodDirection}.
\end{proof}

\begin{proposition}\label{epsNeighborhood}
  Let $P:\C^n\to\C$ be a nonsingular polynomial of degree $N \geqslant 1$. Let 
  $\Theta_k\in\mathbb{S}^{n-1}(\R^n)$ be a finite sequence of 
  non-parallel vectors such that $\cap_k \mathscr{D}_{\Theta_k} = 
  \emptyset$.
  
  If $\mathscr{D}_{\Theta_k} \cap \Theta_k^\perp$ is compact for 
  $k=1,2$ then there is $r_0>0$ such that
  \begin{eqnarray}\label{eq:89}
  \bigcap_k \overline{B}(\mathscr{D}_{\Theta_k}, 2r_0) =
    \emptyset.
  \end{eqnarray}
  Moreover, there are smooth $\Psi_k:\R^n\to{[{0,1}]}$
  such that $\Psi_k$ are bounded Fourier multipliers
  acting on $\finv{\Theta^\perp}{\Theta(1,2)}$ for every
   $\Theta\in\mathbb{S}^{n-1}(\R^n)$, satisfying
\begin{equation}
  \sum_k \Psi_k \equiv 1
\end{equation}
  and $\Psi_k \equiv 0$ in $B(\mathscr{D}_{\Theta_k}, r_0)$.
\end{proposition}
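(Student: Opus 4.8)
The plan is to prove the emptiness statement \eqref{eq:89} by a compactness argument that uses the hypothesis on $\Theta_1,\Theta_2$, then to build the $\Psi_k$ as a telescoping product of cutoffs each of which is constant in a single direction, and finally to verify the Fourier multiplier property by separating the two ``compact'' directions $\Theta_1,\Theta_2$ from the remaining ones. The hard part will be the multiplier estimate; everything else is routine once the geometry is set up.

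First I would record the cylinder structure. Since $\Delta(\Theta,\xi)=\Delta(\Theta,\xi+r\Theta)$, each $\mathscr{D}_{\Theta}$ is a closed set invariant under translation by $\Theta$, so $\mathscr{D}_{\Theta}=(\mathscr{D}_{\Theta}\cap\Theta^\perp)+\R\Theta$ and $d(x,\mathscr{D}_{\Theta})=d(x_{\Theta^\perp},\mathscr{D}_{\Theta}\cap\Theta^\perp)$; the same then holds for any metric neighbourhood $\overline{B}(\mathscr{D}_{\Theta},\rho)$, which is the cylinder in direction $\Theta$ over $\overline{B}(\mathscr{D}_{\Theta}\cap\Theta^\perp,\rho)$. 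For $k=1,2$ this base is bounded, and since $\Theta_1,\Theta_2$ are linearly independent an elementary linear-algebra argument shows that the intersection of two cylinders over bounded bases in those directions is bounded; hence $K_\rho:=\overline{B}(\mathscr{D}_{\Theta_1},\rho)\cap\overline{B}(\mathscr{D}_{\Theta_2},\rho)$ is compact for every $\rho>0$. To get \eqref{eq:89} I would argue by contradiction: if no $r_0$ worked I could pick $\xi_m\in\bigcap_k\overline{B}(\mathscr{D}_{\Theta_k},2/m)\subset K_2$, extract a subsequence $\xi_{m_j}\to\xi_*$ by compactness of $K_2$, and then $d(\xi_*,\mathscr{D}_{\Theta_k})\le|\xi_*-\xi_{m_j}|+2/m_j\to0$ for every $k$, so $\xi_*\in\bigcap_k\mathscr{D}_{\Theta_k}=\emptyset$, a contradiction.

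Next I would fix, for each $k$, a function $\phi_k\in C^\infty(\R^n;[0,1])$ that is constant in the direction $\Theta_k$, vanishes on $B(\mathscr{D}_{\Theta_k},r_0)$, equals $1$ off $B(\mathscr{D}_{\Theta_k},2r_0)$, and has all derivatives bounded; such a $\phi_k$ is obtained by composing a one-dimensional bump with a mollification (within $\Theta_k^\perp$) of the $1$-Lipschitz function $d(\cdot,\mathscr{D}_{\Theta_k})$, and for $k=1,2$ the restriction $1-\phi_k|_{\Theta_k^\perp}$ then lies in $C_c^\infty(\Theta_k^\perp)$ because $\mathscr{D}_{\Theta_k}\cap\Theta_k^\perp$ is compact. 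I would then set $\Psi_k:=\phi_k\prod_{j<k}(1-\phi_j)$ for $k=1,\dots,m$. These are smooth, take values in $[0,1]$, vanish on $B(\mathscr{D}_{\Theta_k},r_0)$ since $\phi_k$ does, and telescope: $\sum_k\Psi_k=1-\prod_{j=1}^m(1-\phi_j)$, which is identically $1$ because, by \eqref{eq:89}, every point lies off some $\overline{B}(\mathscr{D}_{\Theta_j},2r_0)$, where $\phi_j=1$.

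The last step, and the one I expect to be the real obstacle, is the multiplier bound. Using $M_{\Phi}M_{\Psi}=M_{\Phi\Psi}$ and $M_{\Phi+\Psi}=M_{\Phi}+M_{\Psi}$, it suffices to bound $M_{\phi_1},M_{\phi_2},M_{1-\phi_1},M_{1-\phi_2}$ and each $\Psi_k$ with $k\ge3$ on $\finv{\Theta^\perp}{\Theta(1,2)}$ with norm independent of $\Theta$. For $k\ge3$ the support of $\Psi_k$ is contained in $\supp(1-\phi_1)\cap\supp(1-\phi_2)\subset K_{2r_0}$, hence $\Psi_k\in C_c^\infty(\R^n)$, and two integrations by parts show $\|\finv{\Theta}{\Psi_k}\|_{\Theta(1,\infty)}$ is finite with a bound depending only on the support size and a $C^2$ norm of $\Psi_k$, so Lemma \ref{multiplierNorm} applies uniformly in $\Theta$. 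For $i=1,2$, $1-\phi_i$ is constant in the direction $\Theta_i$ with restriction $1-\phi_i|_{\Theta_i^\perp}\in C_c^\infty(\Theta_i^\perp)$, so Corollary \ref{constantInDirectionNorm} bounds $M_{1-\phi_i}$ — its parallel case if $\Theta\parallel\Theta_i$, its non-parallel case otherwise — with a bound depending only on the fixed function $1-\phi_i|_{\Theta_i^\perp}$, hence uniform in $\Theta$; and then $M_{\phi_i}=I-M_{1-\phi_i}$ is bounded as well. The point I would stress is that Corollary \ref{constantInDirectionNorm} only controls multipliers constant in one direction, so a normalized partition of unity $\chi_k/\sum_j\chi_j$ would fail because the normalized cutoffs are constant in no direction; the telescoping product keeps every factor one-directional, and the crucial observation is that, with the two compact-base directions listed first, every $\Psi_k$ with $k\ge3$ inherits the factors $(1-\phi_1)(1-\phi_2)$ and is therefore compactly supported — so the non-parallel case of the Corollary is ever needed only for $\Theta_1,\Theta_2$, exactly where the hypothesis supplies compactness.
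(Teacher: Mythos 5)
Your proposal is correct and follows essentially the same route as the paper: compactness of the intersection of the two cylinder-neighborhoods via the compact bases for $\Theta_1,\Theta_2$, a sequence/contradiction argument for \eqref{eq:89}, the telescoping product of one-directional cutoffs, Corollary \ref{constantInDirectionNorm} for the first two multipliers, and compact support plus Lemma \ref{multiplierNorm} for $\Psi_k$, $k\ge3$. The only (harmless) extra claim is uniformity of the multiplier bounds in $\Theta$, which the proposition does not require and which in the non-parallel case needs the additional observation that the $\nu_\perp(1,\infty)$-norm of $\finv{\nu_\perp}{\{1-\phi_i\}}$ can be bounded independently of the direction $\nu_\perp$ by the same integration-by-parts estimate you use for $k\ge3$.
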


\begin{proof}
  If \(\mathscr{D}_{\Theta_1}\) is empty, then so is any
  neighborhood of it, hence the intersection in
  \eqref{eq:89} is empty. If 
  not, there are at least two linearly independent $\Theta_k$. Then the 
  intersection $\mathscr{D}_{\Theta_1} \cap \mathscr{D}_{\Theta_2}$ is 
  compact because our assumption that the first two
  $\mathscr{D}_{\Theta_k} \cap \Theta_k^\perp$ are compact
  implies that the orthogonal projections of any point in 
  $\mathscr{D}_{\Theta_1} \cap \mathscr{D}_{\Theta_2}$ onto two 
  different codimension 1 subspaces, $\Theta_1^\perp$ and 
  $\Theta_2^\perp$, are bounded. Therefore, a closed 
  neighborhood of finite radius about the intersection is compact too. 
  Hence $\overline{B}(\mathscr{D}_{\Theta_1},1) \cap 
  \overline{B}(\mathscr{D}_{\Theta_2},1)$ is compact. We will use this 
  below.

  Assume, contrary to the claim, that for any $r_0>0$ the intersection $\cap_k 
  \overline{B}(\mathscr{D}_{\Theta_k},2r_0)$ is  non-empty. Then 
  there is a sequence $\xi^1, \xi^2, \ldots \in \R^n$ such
  that $\sup_{k}d(\xi^\ell, \mathscr{D}_{\Theta_k})$
  approaches zero.
By the 
  compactness of $\overline{B}(\mathscr{D}_{\Theta_1},1) \cap 
  \overline{B}(\mathscr{D}_{\Theta_2},1)$ we may assume that $\xi^\ell$ 
  converges to some $\xi$. Then $\xi \in \mathscr{D}_{\Theta_k}$ for all 
  $k$ since the latter are closed sets. This contradicts the assumption 
  that the  intersection of the
  \(\mathscr{D}_{\Theta_k}\) is empty and establishes \eqref{eq:89}.\\
  
  Let $\psi_k : \Theta_k^\perp \to {[{0,1}]}$ be smooth and such that 
  $\psi_k(\xi_{\Theta_k^\perp}) = 0$ if $d(\xi_{\Theta_k^\perp}, 
  \mathscr{D}_{\Theta_k}) \le r_0$ and $\psi_k(\xi_{\Theta_k^\perp}) 
  = 1$ if $d(\xi_{\Theta_k^\perp}, \mathscr{D}_{\Theta_k}) \geqslant 2r_0$. 
  Set
\begin{equation}\label{partitionOfUnityDef}
  \Psi_1(\xi) = \psi_1(\xi_{\Theta_1^\perp}), \qquad \Psi_{k+1}(\xi) = 
  \psi_{k+1}(\xi_{\Theta_{k+1}^\perp}) \prod_{\ell=1}^k \big( 1 - 
  \psi_\ell(\xi_{\Theta_\ell^\perp}) \big)
\end{equation}
  where $\xi_{\Theta_\ell^\perp} = \xi - 
  (\xi\cdot\Theta_\ell)\Theta_\ell \in \Theta_\ell^\perp$. Then 
  $\Psi_k:\R^n\to{[{0,1}]}$ smoothly and $\Psi_k \equiv 0$ on 
  $\overline{B}(\mathscr{D}_{\Theta_k},r_0)$.

  Note that $1-\psi_k \in C^\infty_0(\Theta_k^\perp)$ for $k=1,2$ and 
  $\xi\mapsto\psi_k(\xi_{\Theta_k^\perp})$ is constant in the direction of $\Theta_k$. Thus, given any 
  $\Theta\in\mathbb{S}^{n-1}$, Corollary \ref{constantInDirectionNorm} 
  implies that
\begin{equation}\label{nonParallelMult}
  \norm{ \f{\Theta^\perp}{M_{1-\psi_k}f} }_{\Theta(1,2)} \le 
  \norm{\finv{\Theta_{k\perp}}{\{1-\psi_k\}}}_{\Theta_{k\perp}(1,\infty)} 
  \norm{ \f{\Theta^\perp}{f} }_{\Theta(1,2)}
\end{equation}
  for some direction $\Theta_{k\perp}$ in the $(\Theta, \Theta_k)$-plane 
  perpendicular to $\Theta_k$ when $\Theta\not\parallel\Theta_k$, and
\begin{equation}\label{parallelMult}
  \norm{ \f{\Theta^\perp}{M_{1-\psi_k}f} }_{\Theta(1,2)} \le 
  \sup_{\Theta_k^\perp} \abs{1-\psi_k} \norm{ \f{\Theta^\perp}{f} 
  }_{\Theta(1,2)}
\end{equation}
  when $\Theta\parallel\Theta_k$. Recall that in the first case the 
  $\Theta_{k\perp}(1,\infty)$-norm is taken in the $n-1$ dimensional 
  space $\Theta_k^\perp$. In both cases the multiplier norm, which we 
  denote by $C_k = C_k(\Theta,\Theta_k)$, is finite since $1-\psi_k$ is 
  smooth and compactly supported in $\Theta_k^\perp$, so in particular 
  $\finv{\Theta_{k\perp}}{\{1-\psi_k\}}$ is a Schwartz test function.
  
  Thus, by \eqref{partitionOfUnityDef}, \eqref{nonParallelMult} and 
  \eqref{parallelMult}
\begin{align*}
  &\norm{ \f{\Theta^\perp}{M_{\Psi_1}f} }_{\Theta(1,2)} \le \norm{ 
  \f{\Theta^\perp}{f + M_{1-\psi_1}f} }_{\Theta(1,2)} \le (1+C_1) 
  \norm{ \f{\Theta^\perp}{f} }_{\Theta(1,2)}, \\ &\norm{ 
  \f{\Theta^\perp}{M_{\Psi_2}f} }_{\Theta(1,2)} \le C_1 \norm{ 
  \f{\Theta^\perp}{M_{\psi_2}f} }_{\Theta(1,2)} \le C_1(1+C_2) \norm{ 
  \f{\Theta^\perp}{f} }_{\Theta(1,2)}.
\end{align*}
  We cannot apply the same argument to  $M_{\Psi_3}, 
  M_{\Psi_4}, \ldots$ because the multipliers $1-\psi_k$ are not 
  necessarily compactly supported in $\Theta_k^\perp$. Instead we note 
  that $K = \supp (1-\psi_1)(1-\psi_2) \subset \R^n$ is compact. So 
  $\Psi_{k+1} \in C^\infty_0(B(K,1))$ for $k\geqslant 2$. Lemma 
  \ref{multiplierNorm} then implies that
\[
  \norm{ \f{\Theta^\perp}{M_{\Psi_{k+1}}f} }_{\Theta(1,2)} \le 
  \frac{1}{\sqrt{2\pi}} \norm{ \finv{\Theta}{\Psi_{k+1}} 
  }_{\Theta(1,\infty)} \norm{ \f{\Theta^\perp}{f} }_{\Theta(1,2)}
\]
  where the first norm is finite since $\finv{\Theta}{\Psi_{k+1}} \in 
  \mathscr{S}(\R^n)$. So the multipliers are bounded in all directions: 
  there are finite $C'_k =C'_k(\Theta,\Theta_1,\ldots,\Theta_k)$ such 
  that $\norm{ \f{\Theta^\perp}{M_{\Psi_k}f}}_{\Theta(1,2)} \le C'_k 
  \norm{ \f{\Theta^\perp}{f}}_{\Theta(1,2)}$ for all $k$ and any 
  $\Theta\in\mathbb{S}^{n-1}$.
  
  For the last claim sum the $\Psi_k$ all up to get
\[
  \sum_k \Psi_k(\xi) = 1 - \prod_k 
  \big(1-\psi_k(\xi_{\Theta_k^\perp})\big).
\]
  Since $\supp (1-\psi_k) \subset \overline{B}(\mathscr{D}_{\Theta_k}, 
  2r_0)$ and the intersection of the latter is empty, the product 
  vanishes everywhere.
\end{proof}

  \bigskip We now have all  the necessary ingredients for
  the proof of the main theorem of this section.

\begin{proof}[Proof of Theorem \ref{th:other}]
  Let $P$ be admissible of degree $N\geqslant 1$ and $P_N$ its principal term. 
  Then propositions \ref{algProp} and \ref{epsNeighborhood} imply the 
  existence of a finite set of directions 
  $\Theta_k\in\mathbb{S}^{n-1}(\R^n) \setminus P_N^{-1}(0)$, 
  $k=1,\ldots,m$, an associated partition of unity $\Psi_k$ and a 
  constant $r_0>0$.
  
  Set $f_k = M_{\Psi_k}f$. Then $f = \sum_k f_k$, $\f{}{f_k}(\xi) = 0$ 
  when $d(\xi,\mathscr{D}_{\Theta_k}) \le r_0$, and
\[
  \norm{ \f{\Theta^\perp}{f_k} }_{\Theta(1,2)} \le C_{k,\Theta} \norm{ 
  \f{\Theta^\perp}{f} }_{\Theta(1,2)} \le C_{k,\Theta} \sqrt{d_s} 
  \norm{f}{L^2(D_s)}
\]
  for any $\Theta\in\mathbb{S}^{n-1}$ by Proposition 
  \ref{epsNeighborhood} and Lemma \ref{mixedToUniformNorms}.

  By Condition \ref{twoCylindersCompatibility} of the admissibility 
  definition in \ref{admissibleSymbol} there is $\varepsilon>0$ such 
  that $\f{}{f_k} = 0$ on $\mathscr{B}_{\Theta_k, \varepsilon}$. Let 
  $u_k$ be the solution to $P(D)u_k = f_k$ given by Proposition 
  \ref{solExistence}. We have
\[
  \norm{ \f{\Theta_k^\perp}{u_k} }_{\Theta_k(\infty,2)} \le 
  \frac{N}{\varepsilon} \norm{ \f{\Theta_k^\perp}{f_k} 
  }_{\Theta_k(1,2)}
\]
  by that same proposition. The theorem follows by setting $u = \sum_k 
  u_k$ since
\[
  \norm{u_k}_{L^2(D_r)} \le \sqrt{d_r} \norm{ \f{\Theta_k^\perp}{u_k} 
  }_{\Theta_k(\infty,2)}
\]
  by Lemma \ref{mixedToUniformNorms}.
\end{proof}

\begin{remark}
  The same proof gives $\norm{u}_{L^q(D_r)} \le C d_r^{1/q} d_s^{1/p} 
  \norm{\f{}{f}}_{L^p(\R^n)}$ if $p\le2\le q$ and $p^{-1}+q^{-1}=1$.
\end{remark}

\section{Examples}\label{sec:examples}

  We describe estimates for a few specific PDE's below. Some of the 
  estimates follow directly from Theorem \ref{th:main} or 
  Theorem \ref{th:other}. Others illustrate how the
  method can be applied in different settings.

\begin{example}
  The inhomogeneous Helmholtz equation $(\Delta+k^2)u = f$
  is the motivating example for this work. The equation is
  rotation and translation invariant, and scales simply
  under dilations. Estimates in weighted norms typically
  share none of these properties\footnote{Homogeneous
    weights, e.g. \(||\;|x|^{\delta}f||_{L^{2}}\), retains
    scaling properties at the cost of allowing
    singularities at the origin. They are invariant under
    rotations about the origin, but not about any other
    point}. For this reason, the dependence of the
  estimate on wavenumber \(k\), which is the physically
  relevant parameter, is not clear. However, an estimate
  that comes from Theorem \ref{th:main} or Theorem
  \ref{th:other}, with \(k=1\), i.e.
\[
  \norm{u}_{L^2(D_r)} \le C_{1}\sqrt{d_r d_s} \norm{f}_{L^2(D_s)}
\]
for  \(f\) with $\supp f \subset D_s \subset \R^n$, immediately implies 
\[
  \norm{u}_{L^2(D_r)} \le C \frac{\sqrt{d_r d_s}}{k} 
  \norm{f}_{L^2(D_s)}
\]
by simply noting that  $U(x) = u(kx)$ satisfies 
\begin{eqnarray*}
  (\Delta+k^2)U = k^{2}f(kx)
\end{eqnarray*}
and using the fact that  the diameters scale as distance (i.e.
\(d\mapsto kd\)) and  \(L^{2}\) norms like distance to
the power  \(\frac{n}{2}\).

A second advantage is that \textit{diameter} in Theorems
\ref{th:main} and \ref{th:other} means the length of the
intersection of any line with \(D_{r}\) or \(D_{s}\). This
is particularly appropriate for a source that is supported
on a union of small sets that are far
apart\footnote{Locating well-separated sources and
  scatterers is one of the most well-studied applied
  inverse problems modelled by the Helmholtz
  equation\cite{MUSIC}.}. In weighted norms, the parts of
the source that are far from the \textit{origin} at which
the weights are based, will have large norm because of
their location, yet their contribution to the solution
\(u\) or its far field (asymptotics used in scattering
theory and inverse problems) is no larger than it would be
if it were located at the origin. Insisting that our
estimates share all the invariance properties of the
underlying PDE eliminates these artificial differences
between the physics and the mathematics\footnote{Honesty demands
  that we acknowledge that our domain dependent estimates
  provide semi-norms, rather than norms, so we are not
  ready to give up weighted norms and Besov type norms
  entirely.}.

Estimates of the $L^q$ norms of \(u\) in  in terms of $L^p$ norms of
\(\f{}f\) are sometimes useful as well \cite{corners}. For
$p^{-1}+q^{-1}=1$, $p\le 2 \le q$, our methods give
\[
  \norm{u}_{L^q(D_r)} \le C k^{-1} d_r^{1/q} d_s^{1/p} 
  \norm{\f{}{f}}_{L^p(\R^n)}.
\]
\end{example}

\begin{example}
 The Bilaplacian is a fourth order PDE that arises in the
 theory of elasticity and in the modelling of fluid flow
 (Stokes flow). We include a spectral parameter
 \(\lambda\) and an 
  external force $f$:
\[
  (\Delta^2 - \lambda^2)u = f.
\]
Let us show that the admissibility 
  conditions for Theorem \ref{th:other} given by Definition 
  \ref{admissibleSymbol} are satisfied.

  Assume $\lambda > 0$ and write $\Delta^2-\lambda^2 = P(D)$, and so 
  $P(\xi) = \abs{\xi}^4 - \lambda^2$. 
  Let $\Theta\in\mathbb{S}^{n-1}$ and for $\xi_{\Theta^\perp} \in 
  \Theta^\perp$ write
\[
  p(\tau) = P(\tau\Theta+\xi_{\Theta^\perp}) = (\tau^2 + 
  \abs{\xi_{\Theta^\perp}}^2 - \lambda) (\tau^2 + 
  \abs{\xi_{\Theta^\perp}}^2 + \lambda).
\]
  The roots $\tau=\tau_j$ are easily seen to be
\begin{align*}
  &\tau_1 = \sqrt{\lambda - \abs{\xi_{\Theta^\perp}}^2}, &&\tau_3 = 
  i\sqrt{\lambda + \abs{\xi_{\Theta^\perp}}^2}, \\ &\tau_2 = 
  -\sqrt{\lambda - \abs{\xi_{\Theta^\perp}}^2}, &&\tau_4 = 
  -i\sqrt{\lambda + \abs{\xi_{\Theta^\perp}}^2},
\end{align*}
  where the square root has been chosen to return a non-negative real 
  part and mapping the negative real axis to the imaginary axis in the 
  upper half-plane.
  
  The derivative in the direction $\Theta$ is given by $p'(\tau) = 
  4(\tau^2 + \abs{\xi_{\Theta^\perp}}^2)\tau$. Hence
\[
  \abs{p'(\tau_j)} = 4 \lambda \sqrt{\lambda - 
  \abs{\xi_{\Theta^\perp}}^2}
\]
  for $j=1,2$, and
\[
  \abs{p'(\tau_j)} = 4 \lambda \sqrt{\lambda + 
  \abs{\xi_{\Theta^\perp}}^2}
\]
  for $j=3,4$. Note that in the latter case $\abs{p'(\tau_j)} \geqslant 4 
  \lambda^{3/2}$ for all  \(\xi_{\Theta^\perp}\) .

  If $\varepsilon < 4\lambda^{3/2}$ we see that
\begin{align}\nonumber
  \mathscr{B}_{\Theta,\varepsilon} &= \left\{ \xi\in\R^n \,\middle|\, 
  \lambda - \frac{\varepsilon^2}{16\lambda^2} \le 
  \abs{\xi_{\Theta^\perp}}^2 \le \lambda + 
  \frac{\varepsilon^2}{16\lambda^2} \right\}, \\\label{eq:90}
  \mathscr{D}_\Theta &= 
  \left\{ \xi\in\R^n \,\middle|\, \abs{\xi_{\Theta^\perp}}^2 = \lambda 
  \right\}.
\end{align}
  For any $r_0 > 0$, set $\varepsilon < 4 \lambda^{5/4} r_0^{1/2}$. For any $\xi 
  \in \mathscr{B}_{\Theta,\varepsilon}$ set $\zeta = 
  (\xi\cdot\Theta)\Theta + \lambda^{1/2} \xi_{\Theta^\perp} / 
  \abs{\xi_{\Theta^\perp}}$. Then $\zeta \in \mathscr{D}_\Theta$ and
\[
  \abs{\xi-\zeta} = \abs{ \abs{\xi_{\Theta^\perp}} - \lambda^{1/2} } = 
  \frac{\abs{ \abs{\xi_{\Theta^\perp}}^2 - \lambda }}{ 
  \abs{\xi_{\Theta^\perp}} + \lambda^{1/2} } \le 
  \frac{\varepsilon^2/(16\lambda^2)}{\lambda^{1/2}} < r_0
\]
  and so $\mathscr{B}_{\Theta,\varepsilon} \subset B(\mathscr{D}_\Theta, 
  r_0)$ whenever $\varepsilon < \min( 4\lambda^{3/2}, 4 \lambda^{5/4} 
  r_0^{1/2} )$. Condition \ref{twoCylindersCompatibility} in Definition 
  \ref{admissibleSymbol} is thus satisfied. Condition
  \ref{compactDirections} is an easy consequence of
  \eqref{eq:90}. Combining the
  estimate from Theorem \ref{th:other} with  a 
  scaling argument similar to the previous example yields
  \begin{eqnarray*}
     \norm{u}_{L^2(D_r)} \le C \frac{\sqrt{d_r d_s}}{\lambda^{\frac{3}{2}}} 
  \norm{f}_{L^2(D_s)}.
  \end{eqnarray*}
\end{example}

\begin{example}
  The operator $P(D) = D_1^2D_2^2 - 1$ is not \emph{simply
    characteristic}, and its zeros are not \emph{uniformly
    simple}, as defined in definitions 4.2 and 6.2 by
  Agmon and H\"ormander \cite{Agmon-Hormander} or Section
  14.3.1 in H\"ormander's book \cite{Hormander2}. This is
  because the characteristic variety $P^{-1}(0)$ has two
  different branches approaching a common asymptote
  (Figure \ref{exNonSimply}). Thus the Besov style
  estimates established using uniform simplicity do not
  apply to this operator. We show below that the conditions
  in Definition \ref{admissibleSymbol} are satisfied, so that the
  estimate of Theorem \ref{th:other} holds. As we remarked
  in the introduction, the Besov style estimates of
  \cite{Agmon-Hormander} are a specialization of
  \eqref{eq:91}, and therefore a consequence of Theorem
  \ref{th:other}.\\

\begin{figure}
\begin{center}
  \includegraphics{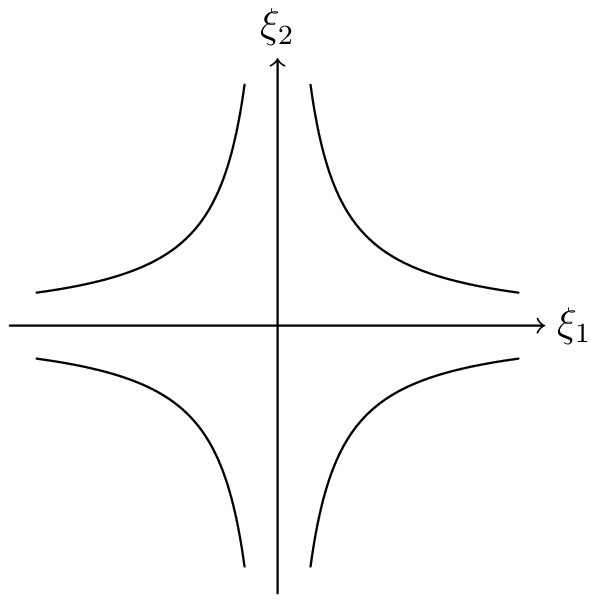}
\end{center}
\caption{Characteristic variety of $D_1^2D_2^2 - 1$.}
\label{exNonSimply}
\end{figure}

It is straightforward to check that \(P(\xi)\) is
nonsingular. We will verify the conditions in Definition
\ref{admissibleSymbol} for $\Theta\in\mathbb{S}^{n-1}$,
with $\Theta_1\neq 0$ and $\Theta_2 \neq 0$, and calculate
$\abs{\Theta\cdot\nabla P(\xi)}$ for every  \(\xi\) in
$P(\xi)=0$ with a fixed $\xi_{\Theta^\perp}$ component. A
glance at Figure \ref{exNonSimply} shows that there will
between two and four real \(\xi\)'s satisfying $P(\xi)=0$ that have
the same  $\xi_{\Theta^\perp}$ component.  We begin by
parameterizing the complex characteristic variety
\[
  P^{-1}(0)=\{ (s, s^{-1}), (s, -s^{-1}) \in \C^2 \mid s\in\C, s\neq 0\}.
\]
Next we project each point in the variety,
$\xi = (s,\pm s^{-1})$, onto
$\Theta^\perp = \{ b(-\Theta_2, \Theta_1) \mid
b\in\R\}$. Its  \(\Theta^\perp\) component is\footnote{Not 
all complex roots will project to
  $\Theta^\perp$ embedded in the reals. But we are only
  interested in the part of the characteristic variety
  that does.}

\begin{eqnarray}\label{eq:93}
  \xi_{\Theta^\perp} = \big( -\Theta_2 s \pm \Theta_1 s^{-1} \big) 
  (-\Theta_2, \Theta_1) =: b(-\Theta_2,
  \Theta_1).
  \end{eqnarray}
  To verify conditions about
  $\mathscr{B}_{\Theta,\varepsilon}$, we want to
  parameterize the points on the
  variety  \(P^{-1}(0)\) in terms of their
  \(\xi_{\Theta^\perp}\) component, which is parameterized
  by  \(b\). So we use \eqref{eq:93} to solve for 
     $s = s(b)$.  The four (complex) roots $\xi = (s(b), \pm 
  s(b)^{-1})$ of $P$ on the line defined by $\xi_{\Theta^\perp} = 
  b(-\Theta_2,\Theta_1)$ are
\begin{align*}
  &\xi^{(1)} = \left( \frac{-b+\sqrt{b^2+4\Theta_1\Theta_2}}{2\Theta_2}, 
  \frac{b+\sqrt{b^2+4\Theta_1\Theta_2}}{2\Theta_1} \right), \\ 
  &\xi^{(2)} = \left( \frac{-b-\sqrt{b^2+4\Theta_1\Theta_2}}{2\Theta_2}, 
  \frac{b-\sqrt{b^2-4\Theta_1\Theta_2}}{2\Theta_1} \right), \\ 
  &\xi^{(3)} = \left( \frac{-b+\sqrt{b^2-4\Theta_1\Theta_2}}{2\Theta_2}, 
  \frac{b+\sqrt{b^2-4\Theta_1\Theta_2}}{2\Theta_1} \right), \\ 
  &\xi^{(4)} = \left( \frac{-b-\sqrt{b^2-4\Theta_1\Theta_2}}{2\Theta_2}, 
  \frac{b-\sqrt{b^2-4\Theta_1\Theta_2}}{2\Theta_1} \right).
\end{align*}
  
  The derivative in the direction $\Theta$ at any root $\xi$ having 
  $\xi_1\xi_2=\pm1$ is
\[
  \Theta\cdot\nabla P(\xi) = \pm 2 (\Theta_2 \xi_1 + \Theta_1 \xi_2).
\]
  Hence, after simplification,
\begin{align*}
  &\Theta\cdot\nabla P(\xi^{(1)}) = 2\sqrt{b^2+4\Theta_1\Theta_2},\\
  &\Theta\cdot\nabla P(\xi^{(2)}) = -2\sqrt{b^2+4\Theta_1\Theta_2},\\
  &\Theta\cdot\nabla P(\xi^{(3)}) = -2\sqrt{b^2-4\Theta_1\Theta_2},\\
  &\Theta\cdot\nabla P(\xi^{(4)}) = 2\sqrt{b^2-4\Theta_1\Theta_2}.
\end{align*}
  So $\abs{\Theta\cdot\nabla P(\xi)} \geqslant 2 \abs{b^2 - 
  4\abs{\Theta_1\Theta_2}}^{1/2}$ at any root $\xi$ with 
  $\xi_{\Theta^\perp} = b(-\Theta_2,\Theta_1)$.

 Now we have explicit descriptions of the sets that appear
 in Definition \ref{admissibleSymbol} and can verify the
 hypotheses of Theorem \ref{th:other}; namely,
\begin{align*}
  &\mathscr{D}_\Theta = \{ \xi\in\R^2 \mid \xi_{\Theta^\perp} = 
  b(-\Theta_2,\Theta_1), \quad b^2 - 4\abs{\Theta_1\Theta_2} = 0 \},\\
  &\mathscr{B}_{\Theta,\varepsilon} = \{ \xi\in\R^2 \mid 
  \xi_{\Theta^\perp} = b(-\Theta_2,\Theta_1), \quad \abs{b^2 - 
  4\abs{\Theta_1\Theta_2}} \le \varepsilon^2/4 \}
\end{align*}
as long as we choose
$\varepsilon^2 < 16 \abs{\Theta_1\Theta_2}$. For any
$r_0>0$ let
$\varepsilon^2 \le
8r_0\sqrt{\abs{\Theta_1\Theta_2}}$. Then if
$\xi \in \mathscr{B}_{\Theta,\varepsilon}$, we have (with
$\xi_{\Theta^\perp} = b(-\Theta_2,\Theta_1)$)
\[
  d(\xi,\mathscr{D}_\Theta) \le \abs{b - 
  2\sqrt{\abs{\Theta_1\Theta_2}}} = \frac{\abs{b^2 - 
  4\abs{\Theta_1\Theta_2}}}{b + 2\sqrt{\abs{\Theta_1\Theta_2}}} \le 
  \frac{\varepsilon^2}{8\sqrt{\abs{\Theta_1\Theta_2}}} \le r_0
\]
  for $b \geqslant 0$, and similarly $d(\xi,\mathscr{D}_\Theta) \le \abs{b + 
  2\sqrt{\abs{\Theta_1\Theta_2}}} \le r_0$ for $b\le 0$. Hence 
  $\mathscr{B}_{\Theta,\varepsilon} \subset 
  \overline{B}(\mathscr{D}_\Theta,r_0)$ for any $r_0>0$ if 
  $\varepsilon^2 \le 8r_0\sqrt{\abs{\Theta_1\Theta_2}}$ and 
  $\varepsilon^2 < 16\abs{\Theta_1\Theta_2}$, so we have
  verified Condition \ref{twoCylindersCompatibility}, and
  Condition  \ref{compactDirections} is automatic in two
  dimensions, so we are finished.
\end{example}
\goodbreak

\begin{example}
  The Faddeev operator is ubiquitous in the area of inverse problems. 
  Its solution enables the construction of the so-called \emph{Complex 
  Geometric Optics} solutions to the Laplace equation that are used to 
  prove uniqueness for many inverse scattering and inverse boundary 
  value problems. See \cite{Faddeev65} for an early application to 
  scattering theory, Sylvester and Uhlmann \cite{Sylvester-Uhlmann} and 
  Nachmann \cite{Nachmann88} for its application to solving the 
  \emph{Calder\'on problem} \cite{Calderon}, and \cite{Uhlmann2009} for 
  a review of more recent developments in that area.
  
  The simplest form, as introduced by Calder\'on is
  \begin{eqnarray}\label{eq:94}
    \left(\Delta + 2\zeta\cdot\nabla\right)u=f
  \end{eqnarray}
with  \(\zeta\in\C^{n}\)  satisfying
\(\zeta\cdot\zeta=0\). It has complex coefficients, but
 setting  \(v=e^{i\Im{\zeta}\cdot x}u\) and
 \(g=e^{i\Im{\zeta}\cdot x}f\) results in
 \begin{eqnarray*}
   \left(\Delta + 2\Re{\zeta}\cdot\nabla\right)v=g
 \end{eqnarray*}
 which has real coefficients. Moreover, \(u\) and \(v\)
 have the same \(L^{p}\) norms, as do \(f\) and \(g\).
 The symbol and its gradient are
\begin{eqnarray*}
  P(\xi) &=& -\xi\cdot(\xi-2i\Re{\zeta})
\\
  \nabla P &=& 2(-\xi+i\Re{\zeta})
\end{eqnarray*}
so  \(\nabla P\) has no real zeros. Thus \(P\) has no
real double characteristics and Theorem \ref{th:main}
applies. Because the equation, and the estimates, dilate
simply, scaling again gives the exact dependence on  \(\zeta\). 
\begin{eqnarray}\label{eq:92}
  ||v||_{L^{2}(D_{r})}\le C
  \frac{\sqrt{d_{r}d_{s}}}{|\Re{\zeta|}}||g||_{L^{2}(D_{s})} 
\end{eqnarray}
with $\supp f \subset D_s$. Here $d_r,
d_s$ are the diameters of the open sets $D_r,
D_s$.  We may, of course, replace \(v\) by \(u\) and \(g\)
by \(f\).\\

In some applications, the condition \(\zeta\cdot\zeta=0\)
is replaced by \(\zeta\cdot\zeta=\lambda\). As the
gradient of  \(P\) is still nowhere vanishing, Theorem
\ref{th:main} still applies, and the estimates still
scale, but it is not clear how the estimates depend on the
ratio  \(\frac{\Re{\zeta}}{\sqrt{\lambda}}\). A direct
calculation shows that \eqref{eq:92} still holds. In addition,
Remark \ref{mixedToUniformNormsLp} also applies here, so
we have for $\frac{1}{p}+\frac{1}{q}=1, p \le 2 \le q$,
\[
  \norm{u}_{L^q(D_r)} \le \frac{ d_r^{1/q} d_s^{1/p} }{\abs{\Re{\zeta}}} 
  \norm{ \f{}{f} }_{L^p(\R^n)}.
\]

  Equation  \eqref{eq:94} has a special direction. We
  expect a solution to decay exponentially in 
  the direction  $\Theta = \Re \zeta / \abs{\Re \zeta}$,
  so an anisotropic estimate is natural here. 
  Taking the Fourier transform in the  \(\Theta^{\perp}\) hyperplane 
  reduces \eqref{eq:94} to an ordinary 
  differential equation which can be factored into the product of two 
  first order operators. Then using \eqref{eq:45} for one of the factors 
  and \eqref{eq:46} for the other gives the estimate
\begin{equation}\label{eq:anisoFaddeev}
  \norm{\f{\Theta^\perp}{u}}_{\Theta(\infty,2)} \le 
  \frac{\norm{\f{\Theta^\perp}{f}}_{\Theta(1,2)}} 
  {\inf_{\xi_{\Theta^\perp} \in \Theta^\perp} \abs{\abs{\Re\zeta} + 
  \sqrt{\abs{\Im\zeta+\xi_{\Theta^\perp}}^2 - \lambda}}} \le 
  \frac{\norm{\f{\Theta^\perp}{f}}_{\Theta(1,2)}}{\abs{\Re\zeta}}
\end{equation}
  when $\lambda\in\R$. This estimate implies \eqref{eq:92} by 
  \eqref{eq:71}.

\end{example}

\bigskip Theorems \ref{th:main} and \ref{th:other} apply to 
  scalar valued PDE's only, but the method can be applied
  to systems. The next proposition could be substantially
  more general, but it is enough to establish estimates for
  the Dirac system.
  
  \begin{proposition}\label{th:dirac}
  Consider a constant coefficient first order system
  \begin{eqnarray}
    \nn
    \bm A(D) =
    \sum_{j=1}^{n}\bm A_{j}\frac{\partial}{\partial x_{j}} + \bm B
  \end{eqnarray}
with $\bm A_1,\ldots,\bm A_n, \bm B \in \C^{n\times n}$ and suppose that, for some  \(k\),
\begin{eqnarray}
  \nn
  \bm M(\xi) = \bm A_{k}^{-1}\left(\sum_{j\ne k}\bm A_{j}\xi_{j}+\bm B\right)
\end{eqnarray}
is normal for all  \(\xi\)\footnote{Equivalently, for some
   \(k\) and all  \(j\),  \(\bm A_{k}^{-1}\bm A_{j}\) and
   \(\bm A_{k}^{-1}\bm B\) are normal} . Then, there is a constant
\(C\), such that for every  \(f\), there exists  \(u\)
solving
\begin{eqnarray}
  \nn
  \bm A(D)u &=&f
\\\na{and}\label{eq:75}
||\ftp{u}||_{\Theta(\infty,2)}&\le&C||\ftp{f}||_{\Theta(1,2)}
\end{eqnarray}
where  \(\Theta\) is the unit vector in the  \(k\)th
coordinate direction, and consequently, for  \(f\)
supported in  \(D_{r}\) and any  \(D_{s}\), 
\begin{equation}
  ||u||_{L^2(D_{s})}\le C\sqrt{d_{r}d_{s}} ||f||_{_{L^2(D_{r})}}
\end{equation}
  where \(d_{i}\)
  is the diameter of \(D_{i}\)
  and \(C\)
  is a constant that depends only \(\bm A(D)\)
  and the dimension \(n\).
\end{proposition}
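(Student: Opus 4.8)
The plan is to follow the pattern of the scalar results, Theorem \ref{th:main} and Theorem \ref{th:other}: a partial Fourier transform turns the system into a $\xi$-parametrized family of ordinary differential equations, each of which is solved with an $L^1(dt)\to L^\infty(dt)$ bound that is uniform in $\xi$, and the resulting inequality \eqref{eq:75} is converted to the isotropic estimate by Lemma \ref{mixedToUniformNorms}, exactly as \eqref{eq:6} followed from \eqref{eq:66}. No partition of unity is needed here; the uniform lower bound on a scalar denominator is replaced by the normality of the symbol matrix.

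First I would take $\Theta=e_k$, $t=x_k$, apply $\ftp{}$ to $\bm A(D)u=f$, and multiply by $\bm A_k^{-1}$ to obtain the family
\[
  \partial_t w(\,\cdot\,,\xi) + \bm N(\xi)\,w(\,\cdot\,,\xi) = \bm A_k^{-1}\ftp{f}(\,\cdot\,,\xi),\qquad \xi\in\Theta^\perp,
\]
where $w=\ftp{u}$ and $\bm N(\xi)$ is affine in $\xi$ and, by the hypothesis (equivalently, because the $\bm A_k^{-1}\bm A_j$ and $\bm A_k^{-1}\bm B$ are normal), is a normal matrix for every $\xi$. Using the spectral decomposition $\bm N(\xi)=\sum_\ell \lambda_\ell(\xi)\bm P_\ell(\xi)$ with eigenvalues $\lambda_\ell(\xi)\in\C$ and mutually orthogonal self-adjoint projections $\bm P_\ell(\xi)$ summing to the identity, I would write
\[
  w(t,\xi)=\sum_\ell \bm P_\ell(\xi)\int_{I_\ell(t,\xi)} e^{-\lambda_\ell(\xi)(t-s)}\,\bm A_k^{-1}\ftp{f}(s,\xi)\,ds,
\]
choosing $I_\ell(t,\xi)=(-\infty,t)$ if $\Re\lambda_\ell(\xi)\ge0$ and $I_\ell(t,\xi)=(t,\infty)$ if $\Re\lambda_\ell(\xi)<0$, so that $\abs{e^{-\lambda_\ell(\xi)(t-s)}}\le1$ on the range of integration, the matrix analogue of \eqref{eq:10}. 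For each fixed $\xi$, orthogonality of the $\bm P_\ell(\xi)$ gives $\norm{w(t,\xi)}^2=\sum_\ell\norm{\bm P_\ell(\xi)(\cdots)}^2\le n\,\norm{\bm A_k^{-1}}^2\,\norm{\ftp{f}(\,\cdot\,,\xi)}_{L^1(dt)}^2$, hence $\norm{w(\,\cdot\,,\xi)}_{L^\infty(dt)}\le\sqrt n\,\norm{\bm A_k^{-1}}\,\norm{\ftp{f}(\,\cdot\,,\xi)}_{L^1(dt)}$. Squaring, integrating in $\xi$, and invoking Plancherel in $\Theta^\perp$ produces \eqref{eq:75}; Lemma \ref{mixedToUniformNorms} applied to each side, with $\supp f\subset D_r$, then gives $\norm{u}_{L^2(D_s)}\le C\sqrt{d_rd_s}\norm{f}_{L^2(D_r)}$.

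The one place normality is essential is the display above: diagonalizing by a \emph{unitary} makes $\sum_\ell\bm P_\ell(\xi)$ cost only the dimensional factor $\sqrt n$, rather than a condition number $\norm{\bm S(\xi)}\,\norm{\bm S(\xi)^{-1}}$ that could grow without bound as $\xi\to\infty$ through values where eigenvalues collide --- this is exactly the obstruction for a merely diagonalizable symbol. The main technical point I expect to require care is the measurable dependence on $\xi$ of the data $\lambda_\ell(\xi)$, $\bm P_\ell(\xi)$, and the rays $I_\ell(t,\xi)$: since the entries of $\bm N(\xi)$ are polynomials in $\xi$, the eigenvalues are measurable roots of the characteristic polynomial, and on each locally closed set where the eigenvalue multiplicities are constant the projections depend analytically on $\xi$ via the Riesz formula $\bm P_\ell(\xi)=\tfrac1{2\pi i}\oint(\zeta-\bm N(\xi))^{-1}d\zeta$; patching over these sets yields a measurable selection, and $\{\Re\lambda_\ell(\xi)\ge0\}$ is measurable. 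The remaining points --- absolute convergence of the integrals for a.e.\ $\xi$ (because $\ftp{f}\in\Theta(1,2)$) and the fact that $\finv{\Theta^\perp}{w}$ solves $\bm A(D)u=f$ --- are routine and parallel the scalar arguments.
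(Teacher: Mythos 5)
Your proposal is correct and follows essentially the same route as the paper: partial Fourier transform in $\Theta^\perp$, solve the resulting $\xi$-parametrized ODE by integrating from $\pm\infty$ according to the sign of $\Re\lambda$, use normality so the spectral projections are orthogonal (norm at most one) and sum to the identity, then pass to the isotropic bound via Lemma \ref{mixedToUniformNorms}. The only (cosmetic) differences are that the paper groups the spectrum into just two orthogonal projections $P^{\pm}$ onto the $\Re\lambda\ge 0$ and $\Re\lambda<0$ eigenspaces rather than one projection per eigenvalue (avoiding your harmless $\sqrt{n}$ factor), and on the $\Re\lambda_\ell<0$ ray the integral should carry a minus sign (i.e.\ $-\int_t^\infty$, as in the paper's formula) for $w$ to solve the ODE with the stated right-hand side.
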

\begin{proof}
 We take the partial Fourier transform in the  \(\Theta^\perp\)
hyperplane, and note that the vector \(\tilde{u}:=\ftp{u}\) must satisfy
\begin{eqnarray}
  \label{eq:25}
    \frac{\partial}{\partial x_{k}}\tilde{u} + \bm M(\xi)\tilde{u} &=&  \bm A_{k}^{-1}\tilde{f}
\end{eqnarray}
and simply write the solution
\begin{eqnarray}\label{eq:74}
  \tilde{u}(t,\xi) &=&
                     \int_{-\infty}^{t}e^{\bm M(\xi)(t-s)}P^{+} \bm A_{k}^{-1}\tilde{f}(s,\xi)ds
\\
       &-& \int_{t}^{\infty}e^{\bm M(\xi)(t-s)}P^{-} \bm A_{k}^{-1}\tilde{f}(s,\xi)ds
\end{eqnarray}
where \(P^{+}(\xi)\)
is the orthogonal projection onto the
\(\Re{\lambda}\ge 0\)
eigenspace of  \(\bm M(\xi)\) and \(P^{-}(\xi)\)
is the orthogonal projection onto the
\(\Re{\lambda}< 0\)
eigenspace. The projections need not be continuous
functions of  \(\xi\), but they need only be measurable
for the formula to make sense. The fact that  \(\bm M(\xi)\)
is normal guarantees that the sum of the projections is
the identity, and therefore  that \(\tilde{u}\) really does
solve (\ref{eq:25}). The estimate (\ref{eq:75}) follows
immediately from the formula (\ref{eq:74}) and the fact
that the orthogonal projections have norm one or zero.
\end{proof}

\begin{example}\label{exDirac}
The 4x4 Dirac operator may be written as a prolongation of
the curl operator
\begin{eqnarray}
  \bm D =
  \begin{pmatrix}
    \nabla\times&-\nabla\\\nabla\cdot&0
  \end{pmatrix}.
  \end{eqnarray}
Alternatively, we may express the first order system as
\begin{eqnarray}
  \nn
  \bm D-i\omega \bm I = \sum_{j=1}^{3}\bm A_{j}\frac{\partial}{\partial x_{j}} -
  i\omega \bm I
\end{eqnarray}
where $\bm I$ is the $4\times 4$ identity matrix and
\begin{eqnarray*}
  \bm P =
        \begin{pmatrix}
          0&-1\\1&0
        \end{pmatrix},
\quad
\bm A_{1} =
        \begin{pmatrix}
          0 &\bm P\\ \bm P & 0
        \end{pmatrix},
\quad
\bm A_{2} =
        \begin{pmatrix}
          0&-\bm I \\ \bm I &0
        \end{pmatrix},
\quad
\bm A_{3} =
        \begin{pmatrix}
          \bm P & 0 \\ 0 & -\bm P
        \end{pmatrix}.
\end{eqnarray*}

It is straightforward to verify that each \(\bm A_{j}\) is
skew, \(\bm A_{j}^{2}=-\bm I\) and that \(\bm A_{i} \bm A_{j}=\pm \bm A_{k}\)
when all three indices are different. These facts
guarantee the hypotheses of Proposition \ref{th:dirac},
and hence the estimate (\ref{eq:75}) for a solution \(u\)
of
\begin{eqnarray}
  \nn
  \left(\bm D-i\omega \bm I\right)u = f.
\end{eqnarray}
\end{example}
\begin{example}[Non-Example]
  We show that the estimates \eqref{eq:53} do not hold for
  the Laplacian in 3 dimensions, which has a double
  characteristic. Suppose that \(f\) is compactly
  supported and
  \begin{eqnarray*}
    \Delta u = f
  \end{eqnarray*}
In 3 dimensions, 
\begin{eqnarray*}
  u(x) = \int\frac{f(y)}{|x-y|}dy + H(x)
\end{eqnarray*}
where \(H\) is a harmonic polynomial. For compactly
supported \(f\), the estimates \eqref{eq:92} would imply
that \(u\) grows no faster than \(1/|x|\), so \(H\) must
be zero. We choose \(f\) to be identically one on the ball
of radius \(A\) centered at the origin. In this case,
\(||f||_{L^{2}(A)}\) is
\(\sqrt{\frac{4}{3}\pi{}A^{3}}\). We next compute
\(||u||_{L^{2}(B_{R}(c))}\), with \(R>>A\) and
\(|c| = 2|R|\). For \(x\in B_{R}(c)\)
  
\begin{eqnarray*}
  |u(x)| &>& \frac{1}{2}\frac{\int f(y)}{R}
\\\na{so that}
 ||u||_{L^{2}(B_{R}(c))}&\ge&\frac{1}{2}
  \frac{(\frac{4}{3}\pi{}A^{3})(\sqrt{\frac{4}{3}\pi{}R^{3}})}{R}
=\frac{1}{2} (\frac{4}{3}\pi)^{\frac{3}{2}} A^{3}R^{\frac{1}{2}}
\end{eqnarray*}
Estimate \eqref{eq:53} would imply that 
\begin{eqnarray*}
\frac{1}{2}(\frac{4}{3}\pi)^{\frac{3}{2}} A^{3}R^{\frac{1}{2}}
\le C\sqrt{AR}A^{\frac{3}{2}} = CA^{\frac{5}{2}}R^{\frac{1}{2}}
\end{eqnarray*}
which is impossible for large  \(A\). 
\end{example}

\section{Conclusions}

We have introduced a technique for proving some simple,
translation invariant estimates, which scale naturally,
and can therefore be directly interpreted for physical
systems and remain meaningful in any choice of units.
Such estimates are necessary because because physical
principles dictate that the fields should store
finite energy in a bounded region (i.e. solutions should
be locally \(L^{2}\)) and radiate finite power, which
implies that they should decay at least as fast as
\(r^{-\frac{n-1}{2}}\) near infinity. We have
replaced weighted norms by estimates on bounded regions
which depend on the diameter of these regions. Because
the estimates  depend on natural geometric quantities,
which rotate, dilate and translate in natural ways, the
estimates themselves have the same symmetries as the
underlying PDE models. The  \(L^{2}\) estimates are based on anisotropic
estimates that are analogous to those  that hold for a
parameterized ODE, so it is reasonable to expect them to
hold for all simply characteristic PDE, but we have not
proven any theorems in that generality, nor produced
examples to show that more restrictions are
necessary. Indeed, we expect that these estimates are true
for many more PDE's and systems than we have covered
here.\\

Theorem \ref{th:main} can certainly be extended to allow
 first order terms with complex coefficients using the
 change of dependent variable in the line following
 \eqref{eq:94}, but we do not know if we can allow other
 complex coefficients as well.\\

 Theorem \ref{th:other} includes many technical
 assumptions that we doubt are necessary. The hypothesis
 that the characteristic variety is non-singular over
 \(\C^{n}\) rather than \(\R^{n}\) is clearly not
 necessary, but we don't know of a simple replacement.
 The admissibility conditions in Definition
 \ref{admissibleSymbol} were chosen to facilitate the
 proof, and enforce a certain uniform behavior outside
 compact sets, somewhat similar to Agmon-H\"ormander's
 \textit{uniformly simple} hypothesis.  In two dimensions,
 where Condition \ref{compactDirections} is automatically
 satisfied, we are not aware of any nonsingular polynomial
 $P:\C^2\to\C$ for which Condition
 \ref{twoCylindersCompatibility} does not hold.
  
 We have only given one example of a system of PDE's. The
 estimates for the Dirac system were particularly easy
 because, for any direction \(\Theta\),
 the resulting model system was normal (this is the
 equivalent of a non-vanishing discriminant for a single
 high order equation).  Other interesting systems of PDE,
 e.g. Maxwell's equations, do no have this property.

\bibliographystyle{plain}
\bibliography{transinvariant}

\begin{thebibliography}{10}

\bibitem{Agmon}
Shmuel Agmon.
\newblock Spectral properties of {S}chr\"odinger operators and scattering
  theory.
\newblock {\em Ann. Scuola Norm.-Sci.}, 2(2):151--218, 1975.

\bibitem{Agmon-Hormander}
Shmuel Agmon and Lars H{\"o}rmander.
\newblock Asymptotic properties of solutions of differential equations with
  simple characteristics.
\newblock {\em J. Anal. Math.}, 30:1--38, 1976.

\bibitem{corners}
Eemeli Bl{\aa}sten, Lassi P{\"a}iv{\"a}rinta, and John Sylvester.
\newblock Corners always scatter.
\newblock {\em Comm. Math. Phys.}, 331(2):725--753, 2014.

\bibitem{Calderon}
Alberto~P. Calder{\'o}n.
\newblock On an inverse boundary value problem.
\newblock In {\em Seminar on {N}umerical {A}nalysis and its {A}pplications to
  {C}ontinuum {P}hysics ({R}io de {J}aneiro, 1980)}, pages 65--73. Soc. Brasil.
  Mat., Rio de Janeiro, 1980.

\bibitem{MUSIC}
Anthony~J. Devaney.
\newblock Time reversal imaging of obscured targets from multistatic data.
\newblock {\em IEEE T. Antenn. Propag.}, 53(5):1600--1610, 2005.

\bibitem{Faddeev65}
Ludvig~D. Faddeev.
\newblock Growing solutions of the {Schr\"odinger} equation.
\newblock {\em Dokl. Akad. Nauk {SSSR}}, 165:514--517, 1965.
\newblock (transl. Sov. Phys. Dokl. 10, 1033).

\bibitem{GKZ}
Israel~M. Gel'fand, Mikhail~M. Kapranov, and Andrei~V. Zelevinsky.
\newblock {\em Discriminants, resultants, and multidimensional determinants}.
\newblock Mathematics: Theory \& Applications. Birkh\"auser Boston, Inc.,
  Boston, MA, 1994.

\bibitem{Hormander2}
Lars H{\"o}rmander.
\newblock {\em The analysis of linear partial differential operators. {II}}.
\newblock Classics in Mathematics. Springer-Verlag, Berlin, 2005.
\newblock Differential operators with constant coefficients, Reprint of the
  1983 original.

\bibitem{MR1230709}
Carlos~E. Kenig, Gustavo Ponce, and Luis Vega.
\newblock Small solutions to nonlinear {S}chr\"odinger equations.
\newblock {\em Ann. I. H. Poincar\'e-AN}, 10(3):255--288, 1993.

\bibitem{MR894584}
Carlos~E. Kenig, Alberto Ruiz, and Christopher~D. Sogge.
\newblock Uniform {S}obolev inequalities and unique continuation for second
  order constant coefficient differential operators.
\newblock {\em Duke Math. J.}, 55(2):329--347, 1987.

\bibitem{Nachmann88}
Adrian~I. Nachman.
\newblock Reconstructions from boundary measurements.
\newblock {\em Ann. Math.}, 128(3):531--576, 1988.

\bibitem{ruiz-notes}
Alberto Ruiz.
\newblock Harmonic analysis and inverse problems, June 2002.
\newblock Lecture notes of the {Summer School at Oulu} ({Finland}).

\bibitem{serov-notes}
Valery Serov.
\newblock {Fourier} transforms and distributions, with applications to
  {Schr\"odinger} operators, 2007.
\newblock Lecture notes, 2\textsuperscript{nd} edition.

\bibitem{scaleinvariant}
John Sylvester.
\newblock An estimate for the free {H}elmholtz equation that scales.
\newblock {\em Inverse Probl. Imag.}, 3(2):333--351, 2009.

\bibitem{Sylvester-Uhlmann}
John Sylvester and Gunther Uhlmann.
\newblock A global uniqueness theorem for an inverse boundary value problem.
\newblock {\em Ann. Math.}, 125(1):153--169, 1987.

\bibitem{MR0358216}
Peter~A. Tomas.
\newblock A restriction theorem for the {F}ourier transform.
\newblock {\em B. Am. Math. Soc.}, 81:477--478, 1975.

\bibitem{Uhlmann2009}
Gunther Uhlmann.
\newblock Electrical impedance tomography and {Calder\'on}'s problem.
\newblock {\em Inverse Probl.}, 25(12):123011, 2009.

\end{thebibliography}

\end{document}